\newcommand{\bbC}{{\mathbbm{C}}}
\newcommand{\bbD}{{\mathbbm{D}}}
\newcommand{\bbN}{{\mathbbm{N}}}
\newcommand{\bbR}{{\mathbbm{R}}}
\newcommand{\bbZ}{{\mathbbm{Z}}}
\newcommand{\calE}{{\mathcal{E}}}
\newcommand{\calZ}{{\mathcal{Z}}}
\newcommand{\cJ}{{\mathcal{J}}}
\newcommand{\rmd}{{\mathrm{d}}}
\newcommand{\SL}{{\mathrm{SL}}}
\newcommand{\SU}{{\mathrm{SU}}}
\newcommand{\spr}{{\mathrm{spr}}}
\newcommand{\tr}{{\mathrm{Tr}}}
\newcommand{\inv}{^{-1}}
\renewcommand{\Im}{\operatorname{Im}}
\renewcommand{\Re}{\operatorname{Re}}
\newcommand{\AC}{\operatorname{AC}}
\newcommand{\LP}{{\mathrm{LP}}}
\newcommand{\Leb}{{\mathrm{Leb}}}
\newcommand\scalemath[2]{\scalebox{#1}{\mbox{\ensuremath{\displaystyle #2}}}}
  \newcommand{\loc}{\mathrm{loc}}
\newcommand{\iop}{{\mathrm{i}}}
\newcommand{\eop}{{\mathrm{e}}}
  \DeclareMathOperator\supp{supp}
\newtheorem{theorem}{Theorem}[section]
\newtheorem{prop}[theorem]{Proposition}
\newtheorem{coro}[theorem]{Corollary}
\newtheorem{lemma}[theorem]{Lemma}
\theoremstyle{definition}
\newtheorem{definition}[theorem]{Definition}
\definecolor{purple}{rgb}{.5,0,1}
\definecolor{orange}{rgb}{1,.5,0}
\definecolor{green}{rgb}{0,.4,0}
\def\set#1{\left\{#1\right\}}
\numberwithin{equation}{section}
\author[B.\ Eichinger]{Benjamin Eichinger}
\thanks{B.E. was spported by the Austrian Science Fund FWF, project no: P33885}
\address{Institute of Analysis and Scientifc Computing, Vienna University of Technology, Wien A-1040, Austria}
\email{benjamin.eichinger@tuwien.ac.at}
\author[J.\ Fillman]{Jake Fillman}
\thanks{J.F.\ was supported in part by Simons Foundation Collaboration Grant \#711663.}
\address{Department of Mathematics, Texas State University, San Marcos, TX 78666, USA}
\email{fillman@txstate.edu}
\author[E.\ Gwaltney]{Ethan Gwaltney}
\thanks{E.G.\ was supported in part by NSF grant DMS--1745670.}
\address{Department of Mathematics, Rice University, Houston, TX~77005, USA}
\email{ethan.gwaltney@rice.edu}
\author[M.\ Luki\'{c}]{Milivoje Luki\'{c}}
\address{Department of Mathematics, Rice University, Houston, TX~77005, USA}
\email{milivoje.lukic@rice.edu}
\thanks{M.L.\ was supported in part by NSF grant DMS--1700179.}
\title[Dirac Operators with Thin Spectra]{Limit-Periodic Dirac Operators with Thin Spectra}
\begin{document}
\maketitle

\begin{abstract} We prove that limit-periodic Dirac operators generically have spectra of zero Lebesgue measure and that a dense set of them have spectra of zero Hausdorff dimension. The proof combines ideas of Avila from a Schr\"odinger setting with a new commutation argument for generating open spectral gaps. This overcomes an obstacle previously observed in the literature; namely, in Schr\"odinger-type settings, translation of the spectral measure corresponds to small $L^\infty$-perturbations of the operator data, but this is not true for Dirac or CMV operators. The new argument is much more model-independent. To demonstrate this, we also apply the argument to prove generic zero-measure spectrum for CMV matrices with limit-periodic Verblunsky coefficients. 
 \end{abstract}

\setcounter{tocdepth}{1}

\tableofcontents

\hypersetup{
	linkcolor={black!30!blue},
	citecolor={black!30!blue},
	urlcolor={black!30!blue}
}

\section{Introduction} \label{sec:intro}

We study Dirac operators in the form
\[
	\Lambda_\varphi =  \begin{bmatrix}
	\iop & 0 \\ 0 & -\iop
\end{bmatrix}	 \frac{{\rmd}}{{\rmd}x} + \begin{bmatrix} 0 & \varphi(x) \\ \overline{\varphi(x)} & 0 \end{bmatrix}
\]
with operator data $\varphi:\bbR \to \bbC$; 
up to a pointwise unitary conjugation, this is equivalent to the classical form of Dirac operators \cite{LevitanSargsjan, GrebertKappeler2014, ClarkGesztesy} given by
\[
L_\varphi =  \begin{bmatrix}
	0 & -1 \\ 1 & 0
\end{bmatrix}	 \frac{{\rmd}}{{\rmd}x} - \begin{bmatrix} \Re \varphi(x) & \Im \varphi(x) \\  \Im \varphi(x) &  -\Re \varphi(x) \end{bmatrix}.
\]
We assume $\varphi \in L^\infty(\bbR)$; in this case, $\Lambda_\varphi$ is an unbounded self-adjoint operator on $L^2(\bbR,\bbC^2)$ with domain $H^1(\bbR,\bbC^2)$.

As one of the simplest classes of differential operators with equal deficiency indices (and therefore existence of self-adjoint operators), Dirac operators have historically been studied in parallel with Schr\"odinger operators. The later discoveries of integrable PDEs further motivate the importance of the corresponding classes of operators; in particular, just as Schr\"odinger operators appear in the Lax pair representation of the KdV equation, Dirac operators of the form $\Lambda_\varphi$ appear in the Zakharov--Shabat Lax pair representation for the defocusing NLS \cite{ZakharovShabat, GrebertKappeler2014}.  The results in this paper can also be motivated from this point of view:  the inverse spectral theory of reflectionless Schr\"odinger operators \cite{SodinYuditskii2,GesztesyYuditskii} and the study of almost periodicity in time of solutions of the KdV equation with almost periodic data \cite{BDGL, Egorova, Egorova2, EVY} require ``thickness'' assumptions on the spectrum such as the Widom condition, the ``direct Cauchy theorem'' property and a gap summability condition; thus, constructions of Schr\"odinger operators with thin spectra \cite{DFL2017JST, DFG2014AHP, LenzSeifStol2014JDE} indicate their limitations. Analogously, the results of this paper indicate the limitations of existing inverse results about reflectionless Dirac operators \cite{EgorovaSurkova,BLY2} and the defocusing NLS equation with almost periodic data.

 We say that $\varphi$ is \emph{periodic} (of period $T>0$) if $\varphi=\varphi(\cdot - T)$. We say that $\varphi$ is (uniformly) \emph{limit-periodic} if it lies in the closure of the set of periodic elements of $C(\bbR)$ (in the $L^\infty$ topology).

Let $\LP(\bbR,\bbC)$ denote the set of all limit-periodic functions $\bbR \to \bbC$.
This is a complete metric space in the $L^\infty$ metric (note however that $\LP(\bbR,\bbC)$ is not a Banach space since the sum of periodic functions with incommensurable frequencies will not be limit-periodic in general).

Recall that a residual subset of a complete metric space $X$ is one that contains a dense $G_\delta$ subset of $X$. We say that a property holds for \emph{generic} $x \in X$ if the set of $x$ for which it holds is residual.

We say that $S \subseteq \bbR$ is a \emph{generalized} Cantor set if it is closed (not necessarily compact), perfect, and nowhere dense. Our main result is that the spectra of Dirac operators with limit-periodic potentials are (typically) generalized Cantor sets that are moreover very thin in the measure-theoretic sense.

\begin{theorem} \label{t:dirac:zeromeas}
For generic $\varphi \in \LP(\bbR,\bbC)$, $\sigma(\Lambda_\varphi)$ is a generalized Cantor set of zero Lebesgue measure, and the spectral type of $\Lambda_\varphi$ is purely singular continuous.
\end{theorem}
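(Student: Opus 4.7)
The plan is to run an Avila-style residual-set argument in the complete metric space $\LP(\bbR,\bbC)$. For each $n,k\in\bbN$ set
\[
O_{n,k} = \set{\varphi \in \LP(\bbR,\bbC) : \Leb(\sigma(\Lambda_\varphi)\cap[-k,k]) < 1/n}.
\]
If every $O_{n,k}$ is open and dense, the intersection $\mathcal{G}=\bigcap_{n,k} O_{n,k}$ is a dense $G_\delta$ on which $\sigma(\Lambda_\varphi)$ has zero Lebesgue measure, hence empty interior, hence is nowhere dense. Perfectness then comes for free from $L^\infty$-convergence of periodic approximants (the spectra of periodic Dirac operators are perfect, and perfectness is preserved under Hausdorff limits of closed sets in $\bbR$).

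\textbf{Openness.} Because $\varphi\mapsto \Lambda_\varphi$ is an isometric inclusion into bounded self-adjoint perturbations of a fixed free Dirac operator, $\|\varphi-\varphi'\|_\infty\le\delta$ yields $\sigma(\Lambda_{\varphi'})\subseteq \sigma(\Lambda_\varphi)+[-\delta,\delta]$. I would combine this with the locally finite band structure of periodic approximants of $\varphi$ to estimate the measure of the $\delta$-enlargement of $\sigma(\Lambda_\varphi)\cap[-k,k]$ and preserve the strict inequality defining $O_{n,k}$ under small $L^\infty$ perturbations.

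\textbf{Density via the commutation argument.} This is the substance of the theorem. Given $\varphi\in\LP(\bbR,\bbC)$ and $\epsilon>0$, I first approximate $\varphi$ in $L^\infty$ by a $T$-periodic $\varphi_0$. Then $\sigma(\Lambda_{\varphi_0})$ is a locally finite union of Floquet bands $\{E : |\tr M_T(E)|\le 2\}$, and the plan is to modify $\varphi_0$ by an $L^\infty$-small perturbation of period $NT$, for $N$ large, that opens many new open gaps inside each original band and pushes $\Leb(\sigma(\Lambda_{\varphi_0+\delta\varphi})\cap[-k,k])$ below $1/n$. The obstacle highlighted in the abstract is that the natural gauge symmetry
\[
U^*\Lambda_\varphi U = \Lambda_{e^{-2\iop\alpha x}\varphi} + \alpha,\qquad U=\mathrm{diag}(e^{\iop\alpha x},e^{-\iop\alpha x}),
\]
realizes translation of the spectral measure by $\alpha$ only through a rapidly oscillating gauge twist that is not $L^\infty$-small, so the Schr\"odinger trick of shifting the potential by a constant is unavailable. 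The new commutation argument has to produce, at any prescribed target energy $E_0$ inside a band of $\Lambda_{\varphi_0}$, an explicitly small periodic perturbation that conjugates the monodromy at $E_0$ into a matrix with $|\tr|>2$, thereby forcing an open gap through $E_0$. The hardest part — where I expect the real work — is guaranteeing that this perturbation is simultaneously small in $L^\infty$, periodic of controlled period, and effective uniformly in $E_0$ across large compact energy intervals, so that a single iteration removes a definite fraction of band measure.

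\textbf{Iteration and spectral type.} Applying the commutation gap-opening procedure at finitely many suitably chosen energies on a tower of periodic approximants with rapidly growing periods and with all perturbations controlled geometrically in $L^\infty$, I obtain a limit-periodic $\tilde\varphi$ with $\|\tilde\varphi-\varphi\|_\infty<\epsilon$ whose spectrum satisfies the defining inequality of $O_{n,k}$; this establishes density. On $\mathcal{G}$ the spectrum has zero Lebesgue measure, which precludes any absolutely continuous part. To upgrade to purely singular continuous spectrum, I intersect $\mathcal{G}$ with a further residual set on which a Gordon-type lemma adapted to Dirac transfer matrices rules out $L^2$ eigenfunctions; limit-periodicity in $L^\infty$ furnishes the required almost-repetition of $\varphi$ on arbitrarily long scales, and the Dirac Gronwall estimate closes the argument.
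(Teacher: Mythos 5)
Your overall architecture matches the paper's: a $G_\delta$ intersection of open dense sets $O_{n,k}$ defined by $\Leb(\sigma(\Lambda_\varphi)\cap[-k,k])<1/n$, density obtained by noncommutation-driven gap opening, and spectral type from zero measure together with a Gordon lemma. However, there is a genuine error in your argument for perfectness. You assert that perfectness is preserved under Hausdorff limits of closed subsets of $\bbR$; it is not. For instance, $K_n=[-1/n,1/n]\cup[1-1/n,1+1/n]$ is perfect for every $n$ but converges in Hausdorff distance to $\{0,1\}$, which consists only of isolated points. So $L^\infty$-convergence of periodic approximants, even combined with Proposition~\ref{prop:dirac:HdSpecPert}, does not rule out isolated points in the limit spectrum. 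The paper instead invokes the ergodic-theoretic argument of Pastur \cite{Pastur1980CMP}: a limit-periodic $\varphi$ generates a uniquely ergodic family, any fixed $\lambda$ is an eigenvalue of $\Lambda_{\varphi_\omega}$ only for a null set of $\omega$, and an isolated spectral point would be an eigenvalue on a set of full measure. You need this (or some comparable dynamical input) to close the gap; it is not a topological consequence of the approximation scheme.

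Two smaller points. Your openness argument is more elaborate than necessary: from $d_{\rm Hd}(\sigma(\Lambda_{\varphi_1}),\sigma(\Lambda_{\varphi_2}))\le\|\varphi_1-\varphi_2\|_\infty$ one gets $\sigma(\Lambda_{\varphi'})\cap[-k,k]\subseteq B_\delta(\sigma(\Lambda_\varphi))\cap[-k,k]$, and these sets decrease to $\sigma(\Lambda_\varphi)\cap[-k,k]$ as $\delta\downarrow 0$ inside the finite-measure set $[-k,k]$, so downward continuity of Lebesgue measure gives openness without any reference to band structure. Also, the phrase ``conjugates the monodromy at $E_0$ into a matrix with $|\tr|>2$'' is misleading, since conjugation preserves trace. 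What the paper actually does (Lemma~\ref{lem:dirac:resolventCover}) is replace the period-$T$ monodromy by a period-$NT$ monodromy realized as a product of powers of two noncommuting elliptic $\SU(1,1)$ matrices (produced via Lemma~\ref{lem:dirac:ZHT} and the identity principle for real-analytic maps), which Lemma~\ref{lem:SU11:nonabSemiGroup} forces to be hyperbolic. The uniformity over $E_0\in[-R,R]$ that you rightly flag as the hardest step is resolved in Lemma~\ref{lem:dirac:thinSpec} by compactness: finitely many such perturbations $\varphi_1,\dots,\varphi_m$ have resolvent sets covering $[-R,R]$, giving a uniform lower bound $\kappa>0$ on $\max_j L(\lambda,\varphi_j)$, which combines with the density-of-states estimate of Lemma~\ref{l:ids:tm} to yield the exponential measure bound.
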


One can strengthen the zero-measure statement to spectrum of zero Hausdorff dimension for a dense set of operator data:

\begin{theorem} \label{t:dirac:zerohd}
For a dense set of $\varphi \in \LP(\bbR,\bbC)$, $\sigma(\Lambda_\varphi)$ is a generalized Cantor set of zero Hausdorff dimension and zero lower box-counting dimension, and the spectral type of $\Lambda_\varphi$ is purely singular continuous.
\end{theorem}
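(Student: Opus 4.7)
The plan is to strengthen the construction behind Theorem \ref{t:dirac:zeromeas} quantitatively so as to force not merely zero Lebesgue measure but zero Hausdorff dimension. Fix a target $\varphi_* \in \LP(\bbR,\bbC)$ and $\varepsilon > 0$; our goal is to produce some $\varphi$ with $\|\varphi - \varphi_*\|_\infty < \varepsilon$, $\sigma(\Lambda_\varphi)$ a Cantor set of zero Hausdorff dimension and zero lower box-counting dimension, and spectral type purely singular continuous. Density then follows since $\varphi_*$ and $\varepsilon$ are arbitrary.

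First I would choose an initial periodic $\varphi_0$ with $\|\varphi_0 - \varphi_*\|_\infty < \varepsilon/2$ and then inductively construct a sequence $\varphi_n$ of periodic potentials with periods $T_n \to \infty$ satisfying $T_{n-1} \mid T_n$, $\|\varphi_n - \varphi_{n-1}\|_\infty < \delta_n$ with $\sum_{n\ge 1} \delta_n < \varepsilon/2$, so that $\varphi = \lim_n \varphi_n$ exists in $L^\infty$, is limit-periodic, and satisfies $\|\varphi - \varphi_*\|_\infty < \varepsilon$. The construction of $\varphi_{n+1}$ from $\varphi_n$ is the heart of the matter. Using the new commutation argument announced in the abstract, I would perturb $\varphi_n$ into a periodic $\varphi_{n+1}$ of larger period $T_{n+1}$ so that many additional gaps of the periodic spectrum $\sigma(\Lambda_{\varphi_{n+1}})$ become open; since $T_n \mid T_{n+1}$, all previously opened gaps of $\sigma(\Lambda_{\varphi_n})$ remain as gaps of $\sigma(\Lambda_{\varphi_{n+1}})$ (up to a small motion under a continuity/stability lemma for gap edges), so the gap structures stack.

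The quantitative control proceeds as follows. At the $n$th step I want to achieve that $\sigma(\Lambda_{\varphi_n})\cap[-R_n,R_n]$ is contained in a union of $N_n$ closed intervals $J_n^{(1)},\dots,J_n^{(N_n)}$, where $R_n\to\infty$ and
\[
\sum_{j=1}^{N_n} |J_n^{(j)}|^{s_n} \le 2^{-n}
\]
for a sequence $s_n\downarrow 0$. This is possible by exploiting the fact that the periodic Dirac spectrum with period $T_{n+1}$ consists of bands whose combined length can be controlled by a Thouless-type sum rule, while the commutation argument forces a definite fraction of the interior band edges apart; by choosing $T_{n+1}$ enormously large compared with $T_n$ and the perturbation parameter much smaller than the size of the existing gaps, the opened gaps consume most of the mass of each band of $\sigma(\Lambda_{\varphi_n})$. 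The continuity of spectra as functions of $\varphi$ in $L^\infty$ (semicontinuity in the Hausdorff metric, bounded below by $\|\varphi_n-\varphi\|_\infty$) then yields
\[
\sigma(\Lambda_\varphi)\cap[-R_n,R_n]\subseteq \bigcup_{j=1}^{N_n} J_n^{(j),+},
\]
with $J_n^{(j),+}$ the $\sum_{k>n}\delta_k$-enlargement of $J_n^{(j)}$. Taking $\delta_k$ decaying fast enough (geometrically in $2^{-k}$ and in $s_k$) preserves the $s_n$-content bound and gives zero $s$-dimensional Hausdorff content for every $s>0$, hence $\dim_H \sigma(\Lambda_\varphi)=0$; the lower box-counting bound follows from the same covering since the number of covering intervals equals $N_n$ at scale $\min_j |J_n^{(j)}|$.

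The singular continuity and the Cantor nature of the spectrum follow essentially for free: the absence of eigenvalues and of absolutely continuous spectrum is automatic once the spectrum has zero Lebesgue measure and, for the absence of eigenvalues, from the usual argument that minimal almost periodic potentials have no $L^2$ eigenfunctions; perfectness and non-void interior-free-ness follow from the nested gap structure guaranteed by the commutation argument. The main obstacle I anticipate is the third step above: establishing a quantitative, effective, commutation-based gap-opening lemma that simultaneously (i) opens a prescribed large number of new gaps in the periodic spectrum, (ii) controls their size from below independently of the (large) period $T_{n+1}$, and (iii) costs only a small $L^\infty$ perturbation of $\varphi_n$. This is precisely the ingredient that, as the abstract emphasizes, replaces the unavailable spectral-translation trick from the Schr\"odinger setting, and it is where the main work lies. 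Once such a lemma is in hand, the iteration and Hausdorff-content bookkeeping above are routine refinements of the zero-measure argument.
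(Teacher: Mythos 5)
Your outline matches the paper's strategy in broad strokes: fix a periodic $\varphi_0$ close to $\varphi_*$, iterate a quantitative thin-spectrum lemma to build periodic approximants $\varphi^{(n)}$ with exponentially small spectral measure on growing windows, control the total $L^\infty$ perturbation, and use Hausdorff-distance stability plus a Gordon criterion at the end. Your bookkeeping tracks $s_n$-dimensional Hausdorff content $\sum_j |J_n^{(j)}|^{s_n}\le 2^{-n}$ directly; the paper instead bounds the lower box-counting dimension, which here is simpler: Theorem~\ref{t:dirac:floquet}(f) gives $\lesssim T_k$ bands in $[-n,n]$, Lemma~\ref{lem:dirac:thinSpec} gives total measure $\le e^{-T_k^{1/2}}$, and the enforced smallness $4\varepsilon_k < e^{-T_k^{1/2}}$ makes the $2\varepsilon_k$-fattening of each band have length $\le 2e^{-T_k^{1/2}}$, so $N(\Sigma_\infty\cap[-n,n],2e^{-T_k^{1/2}})\lesssim T_k$ and $\dim_{\rm B}^-\le\limsup_k (\log T_k)/\sqrt{T_k}=0$. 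Either counting scheme works once the key lemma is in hand, and they really are ``routine refinements.''

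The genuine gap is exactly where you say it is, and it is not a small one: you never prove the quantitative thin-spectrum lemma, and the mechanism you gesture at (``Thouless-type sum rule,'' ``the commutation argument forces a definite fraction of the interior band edges apart,'' controlling new gap sizes ``from below'') does not match what actually works. The paper's Lemma~\ref{lem:dirac:thinSpec} does not directly open or size individual gaps. Instead, it first covers $[-R,R]$ by resolvent sets of finitely many $T'$-periodic potentials $\varphi_1,\ldots,\varphi_m$ via compactness and Lemma~\ref{lem:dirac:resolventCover} (which is where the commutation idea, Lemmas~\ref{lem:SU11:nonabSemiGroup} and~\ref{lem:dirac:ZHT}, enters). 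It then concatenates many copies of each $\varphi_j$ over the long period $\widetilde T$; for each $\lambda\in[-R,R]\cap\sigma(\Lambda_{\widetilde\varphi})$ some $L(\lambda,\varphi_j)\geq\kappa>0$, so a sub-block transfer matrix $X_\lambda(s)$ grows like $e^{c_1\widetilde T}$, and the conjugacy-uniqueness trick \eqref{eq:dirac:monodromyConjDef} combined with the density-of-states formula (Lemma~\ref{l:ids:tm}) forces $\tfrac{\rmd\rho}{\rmd\lambda}\gtrsim e^{c_1\widetilde T}$ on bands, hence each band has length $\lesssim e^{-c_1\widetilde T}$. That is, the exponential band-length bound comes out of the Lyapunov exponent and the Johnson--Moser/DOS machinery of Section~\ref{sec:dirac}, not out of a local gap-opening estimate. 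Similarly, your appeal to a generic ``minimal almost periodic potentials have no $L^2$ eigenfunctions'' argument for singular continuity is not what is used: the paper enforces a concrete Gordon condition (Definition~\ref{def:gordonTypeDirac}, Theorem~\ref{t:gordonTypeDirac}) by choosing $\varepsilon_n < \tfrac12(n+1)^{-T_n}$. Without the proof of the central lemma, the proposal establishes the skeleton but not the theorem.
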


Let us comment on the proofs of Theorems~\ref{t:dirac:zeromeas} and \ref{t:dirac:zerohd}. Beginning with the seminal paper of Avila \cite{Avila2009CMP}, there is by now a well-established path to obtaining thin spectra for limit-periodic operators, provided one can perform a version of Avila's perturb-and-grow technique \cite{DFL2017JST, DFW2021preprint, DamanikGan2010JFA,  FillmanOng2017JFA}. The construction of \cite{Avila2009CMP} begins with a periodic operator, performs a finite number of small perturbations to move energies out of the spectrum, and exploits uniform hyperbolicity of cocycles in the resolvent set in conjunction with connections between the density of states and rotation number. The key perturbative argument is done in two steps: first, to open up many small gaps in the spectrum via a perturbation to an operator of much higher period, and then to shift these new gaps around in a carefully controlled fashion. 

The first step is generally straightforward to implement, and follows readily from Floquet theory; compare \cite{Simon1976AIHP}. The second step has traditionally relied on small translations (or dilations \cite{DFW2021preprint}) of the spectral measure in the self-adjoint setting, or rotations of the spectral measure in the unitary setting. In the Schr\"odinger setting, a translation of the spectral measure corresponds to a constant shift to the potential, so small translations correspond to uniformly small perturbations of the potential. This is not the case for Dirac operators: translation of the spectral measure corresponds to multiplication of the operator data $\varphi(x)$ by $\eop^{\iop kx}$, which is in general not a small perturbation in $L^\infty(\bbR)$! A similar obstruction was noted in \cite{FillmanOng2017JFA} in the setting of CMV matrices: rotation by angle $\theta$ corresponds to multiplication of the $n$-th Verblunsky coefficient $\alpha_n$ by $\eop^{-\iop (n+1)\theta}$ (cf.\ \cite[p.\ 960]{Simon2005OPUC2}).
In that paper, the authors noted that defect and overcame it by enlarging the class of operators under consideration to include simple spectral shifts. However, they noted at the time that this enlargement of the space of operators was somewhat contrived and that one should be able to perform the desired perturbative analysis without passing to an artificial enlargement of the parameter space, stating in particular ``\textit{Additional ideas are needed to refine our techniques down to this setting; we regard this as an interesting open question}.'' \cite[Page~5114]{FillmanOng2017JFA}.

We overcome this difficulty in Section~\ref{sec:proofs} by using an indirect argument to move energies out of the spectrum via noncommutation of transfer matrices, which is itself inspired by recent work on verifying the hypotheses of Furstenberg's theorem \cite{Furstenberg1963TAMS} via ideas in inverse spectral theory \cite{BDFGVWZ2019JFA}. We also exploit compactness to simplify some arguments in a manner that has not been exploited in the current setting before.

The approach described for Dirac operators is robust in the sense that it can be applied in any situation in which one has suitable inverse spectral results. To demonstrate the versatility of this approach, we prove in Section~\ref{sec:OPUC} related theorems for extended CMV matrices.

Let $\bbD = \{z \in \bbC : |z| <1\}$ and $\partial \bbD = \{z \in \bbC : |z|=1\}$ denote the open unit disk and the unit circle in $\bbC$. Given a sequence $\alpha = \{\alpha_n\}_{n\in\bbZ}\in\bbD^\bbZ$, the corresponding \emph{extended CMV matrix} $\mathcal{E}=\mathcal{E}_\alpha$ is given by
\begin{equation} \label{def:extcmv}
\scalemath{.75}{\mathcal{E}
=
\begin{bmatrix}
\ddots & \ddots & \ddots &&&&&  \\
\overline{\alpha_0}\rho_{-1} & -\overline{\alpha_0}\alpha_{-1} & \overline{\alpha_1}\rho_0 & \rho_1\rho_0 &&& & \\
\rho_0\rho_{-1} & -\rho_0\alpha_{-1} & -\overline{\alpha_1}\alpha_0 & -\rho_1 \alpha_0 &&& & \\
&  & \overline{\alpha_2}\rho_1 & -\overline{\alpha_2}\alpha_1 & \overline{\alpha_3} \rho_2 & \rho_3\rho_2 & & \\
& & \rho_2\rho_1 & -\rho_2\alpha_1 & -\overline{\alpha_3}\alpha_2 & -\rho_3\alpha_2 &  &  \\
& &&& \overline{\alpha_4} \rho_3 & -\overline{\alpha_4}\alpha_3 & \overline{\alpha_5}\rho_4 & \rho_5\rho_4 \\
& &&& \rho_4\rho_3 & -\rho_4\alpha_3 & -\overline{\alpha_5}\alpha_4 & -\rho_5 \alpha_4  \\
& &&&& \ddots & \ddots &  \ddots
\end{bmatrix}},
\end{equation}
where $\rho_n = (1-|\alpha_n|^2)^{1/2}$. 

The extended CMV operator is a significant object in mathematical physics, with connections to orthogonal polynomials \cite{Simon2005OPUC1, Simon2005OPUC2}, quantum walks on the integers \cite{CGMV2010CPAM, CGMV2012QIP}, and gap-labelling problems for the ferromagnetic Ising model \cite{DFLY2015IMRN, DamMunYes2013JSP}.

Naturally, $\alpha$ is $q$-\emph{periodic} for $q \in \bbN$ if $\alpha_{n+q}\equiv \alpha_n$. To avoid trivialities in the present setting, we only want to consider $\alpha$ that are bounded way from $\partial \bbD$, that is $\|\alpha\|_\infty < 1$. On the other hand, in order to apply Baire category arguments, one wants to work with a complete metric space of operator data. This was achieved in \cite{FillmanOng2017JFA} by fixing an \emph{a priori} bound $0<r<1$ and considering those limit-periodic $\alpha$ for which $\|\alpha\|_\infty \leq r$. The following definition gives us a way to consider all limit-periodic sequences in $\bbD$ that are bounded away from $\partial \bbD$ without enforcing \emph{a priori} bounds.

\begin{definition} \label{def:poincaremetric}
Equip $\bbD$ with the Poincar\'{e} metric $\delta(z_1,z_2) = \mathrm{tanh}^{-1}\left|{\frac  {z_1-z_2}{1-z_1\overline{z_2}}}\right|$. For sequences $\alpha,\beta \in \bbD^\bbZ$, denote by
\begin{equation} \label{eq:CMV:inftyHyperbolicDef} \delta(\alpha,\beta) = \sup\{ \delta(\alpha_n,\beta_n) : n \in \bbZ \} 
\end{equation}
the induced metric on $\bbD^\bbZ$.
\end{definition}

Notice that $\delta(\alpha,0) < \infty $ if and only if $\sup_n |\alpha_n| < 1$. Let us say that $\alpha \in \bbD^\bbZ$ is \emph{limit-periodic} if there exist periodic sequences $\alpha^{(n)} \in \bbD^\bbZ$ such that $\delta(\alpha^{(n)} , \alpha) \to 0$.  Denote by $\LP(\bbZ,\bbD)$ the set of limit-periodic sequences. The reader can readily check that $\LP(\bbZ,\bbD)$ is complete in the metric $\delta$ and that $\sup_n |\alpha_n|<1$ for every $\alpha \in \LP(\bbZ,\bbD)$.

Since extended CMV operators are unitary, their spectra are contained in $\partial \bbD$, the unit circle. We will say that $S \subseteq \partial \bbD$ is a Cantor subset of $\partial \bbD$ if it is closed, perfect, and nowhere dense (in the relative topology as a subset of $\partial \bbD$).

\begin{theorem} \label{t:extCMV:zeromeas}
For generic $\alpha \in \LP(\bbZ,\bbD)$, $\sigma(\calE_\alpha)$ is a Cantor subset of $\partial \bbD$ of zero Lebesgue measure, and the spectral type  of $\calE_\alpha$ is purely singular continuous. \end{theorem}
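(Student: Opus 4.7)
The plan is to implement a Baire category argument in the complete metric space $(\LP(\bbZ,\bbD),\delta)$, parallel to the argument sketched for Dirac operators. For each $\eta > 0$, set
\[
Z_\eta := \{\alpha \in \LP(\bbZ,\bbD) : \Leb(\sigma(\calE_\alpha)) < \eta\}.
\]
Since $\delta$-convergence forces $\ell^\infty$-convergence, $\alpha \mapsto \calE_\alpha$ is norm continuous, so $\sigma(\calE_\alpha)$ varies continuously in Hausdorff distance. An outer regularity argument then shows each $Z_\eta$ is open: given $\alpha \in Z_\eta$, choose an open $U \supseteq \sigma(\calE_\alpha)$ with $\Leb(U) < \eta$, note that an $\varepsilon$-neighborhood of $\sigma(\calE_\alpha)$ lies in $U$ for some $\varepsilon > 0$, and conclude $\sigma(\calE_{\alpha'}) \subseteq U$ for all $\alpha'$ in a suitable $\delta$-neighborhood of $\alpha$. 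The set of $\alpha$ with zero-measure spectrum is then the residual set $\bigcap_n Z_{1/n}$, and it suffices to prove each $Z_\eta$ is dense.

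Density reduces to the perturb-and-grow step, since periodic sequences are dense in $\LP(\bbZ,\bbD)$: given a periodic $\alpha_0$ of period $q$ and $\epsilon,\eta > 0$, I would produce a (higher-period) periodic $\alpha$ with $\delta(\alpha,\alpha_0) < \epsilon$ and $\Leb(\sigma(\calE_\alpha)) < \eta$. This proceeds in two phases. In the first, classical phase, CMV Floquet theory (cf.~\cite{Simon2005OPUC2}) guarantees that an arbitrarily $\delta$-small perturbation of $\alpha_0$ to a period-$Nq$ sequence, for $N$ large, opens many new gaps: each Floquet band of $\calE_{\alpha_0}$ subdivides into roughly $N$ subbands separated by small gaps.

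The second, crucial phase is the relocation of these gaps so that every remaining arc of the spectrum becomes short. Rotating the spectral measure, the direct analogue of Avila's translation trick, corresponds to the substitution $\alpha_n \mapsto \eop^{-\iop(n+1)\theta}\alpha_n$, which is not $\delta$-small unless $\theta$ is. Instead, I would implement the noncommutation-of-transfer-matrices argument developed in Section~\ref{sec:proofs} for Dirac operators, adapted to the $\SU(1,1)$-valued Szeg\H{o} transfer matrices of the CMV framework: for any target $z \in \partial\bbD$ in the current spectrum, an arbitrarily $\delta$-small perturbation of $\alpha$ supported in a single period produces enough noncommutation in the transfer matrix product over the (enlarged) period to certify uniform hyperbolicity of the cocycle at $z$, placing $z$ in a resolvent gap while keeping the resulting sequence periodic. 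Iterating this over a sufficiently fine finite net in the current spectrum shrinks every arc below length $\eta/k$ (where $k$ bounds the number of arcs after the first phase), driving the total Lebesgue measure below $\eta$.

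Finally, for the spectral type: zero Lebesgue measure of $\sigma(\calE_\alpha)$ immediately precludes any absolutely continuous component. Generic absence of eigenvalues follows from a standard Baire argument, intersecting over a countable dense subset of $\partial\bbD$ the residual sets of $\alpha$ for which a prescribed $z$ fails to be an eigenvalue (periodic operators have empty point spectrum, and this persists under generic $\delta$-perturbation). Nowhere denseness of the spectrum is automatic from zero Lebesgue measure, and perfectness follows from standard arguments for ergodic operators with minimal hull. The principal obstacle, and the innovation taken over from the Dirac analysis, is the second phase of the perturb-and-grow step: executing the noncommutation argument for CMV transfer matrices with quantitative control on how the perturbation size relates to the size of the generated spectral gap, so that only $\delta$-small perturbations suffice to clear every target point out of the spectrum while keeping the Verblunsky coefficients in $\bbD$.
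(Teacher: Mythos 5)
Your overall Baire-category framework (the open dense sets $Z_\eta$, openness via Hausdorff continuity of the spectrum, density reduced to a perturbation of a periodic sequence, and the use of $\SU(1,1)$ noncommutation to generate hyperbolicity and hence resolvent energies) is faithful to the paper's strategy, and you correctly identify the noncommutation step as the central innovation. However, your argument for absence of eigenvalues does not work. You propose intersecting, over a countable dense set of $z\in\partial\bbD$, the collections of $\alpha$ for which the fixed $z$ is not an eigenvalue. Even granting that each such collection is residual (which would itself need justification, as that set is dense but not obviously $G_\delta$), the resulting intersection only precludes eigenvalues at the countably many chosen energies: it says nothing about eigenvalues at other points of $\partial\bbD$, and there is no a priori reason a singular-continuous-looking spectrum cannot develop an eigenvalue off the chosen net. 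The paper handles this with a CMV Gordon lemma (cf.\ Theorem~\ref{t:gordonTypeDirac} for the Dirac version, Proposition~\ref{prop:dirac:gordonGdelt} for residuality, and the reference \cite{Fillman2017PAMS} for the CMV analogue): the set of Gordon-type $\alpha$ is a dense $G_\delta$ in $\LP(\bbZ,\bbD)$, and Gordon-type sequences have empty point spectrum for \emph{every} $z$ simultaneously, via a three-block transfer matrix estimate. You need some such uniform-in-$z$ criterion; the energy-by-energy Baire argument cannot deliver it.

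Your description of the quantitative thin-spectrum step is also not the paper's argument and, as phrased, is at risk. You describe a two-phase process (open gaps via Floquet theory, then relocate them) and a combination step where you "iterate over a fine net" to shrink every arc. But pushing one energy $z_1$ out of the spectrum and then perturbing again to push out $z_2$ may close or shrink the gap around $z_1$, so the iteration needs quantitative control over how much each later perturbation can be allowed to disturb earlier gaps, and this cascades. The paper avoids the issue by a single-shot construction in the spirit of Lemma~\ref{lem:dirac:thinSpec}: use Lemma~\ref{lem:OPUC:resolventCover} and compactness of $\partial\bbD$ to obtain finitely many perturbed periodic $\alpha_j$, all of a common period, with $\bigcup_j\rho(\calE_{\alpha_j})\supseteq\partial\bbD$; then form one long concatenation $\widetilde\alpha$ of these blocks. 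For every $z\in\partial\bbD$ some block produces exponential growth (positive Lyapunov exponent), which feeds into a density-of-states estimate analogous to Lemma~\ref{l:ids:tm} showing that every Floquet band of $\widetilde\alpha$ has exponentially small length, so $\Leb(\sigma(\calE_{\widetilde\alpha}))\le e^{-c_0\widetilde q}$. Note also that the paper's noncommutation argument is not a gap-relocation step: it directly removes a prescribed energy from the spectrum, so the "Phase~1 Floquet gap opening" you describe plays no role in this paper's proof.
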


As in the Dirac case, one can strengthen the result to show the spectrum has zero Hausdorff dimension for a dense set of operator data:

\begin{theorem}\label{t:extCMV:zerohd}
For a dense set of $\alpha \in \LP(\bbZ,\bbD)$, $\sigma(\calE_\alpha)$ is a Cantor subset of $\partial \bbD$ of zero Hausdorff dimension and zero lower box-counting dimension, and the spectral type of $\calE_\alpha$ is purely singular continuous.
\end{theorem}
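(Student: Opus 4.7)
The plan is to emulate the strategy for Theorem \ref{t:dirac:zerohd} in the CMV setting, running a rapid-approximation scheme whose quantitative control at each step is provided by a sharpened version of the commutation-based gap-opening mechanism developed in Section \ref{sec:OPUC}. The density statement (rather than genericity) is the natural one: zero Hausdorff dimension is not a $G_\delta$ property in the metric $\delta$, so one cannot expect it generically, but one can reach it along carefully chosen Cauchy sequences.

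Given a periodic $\alpha^{(0)} \in \LP(\bbZ,\bbD)$ (such sequences are dense) and $\varepsilon > 0$, I inductively construct periodic $\alpha^{(k)}$ with periods $q_k \mid q_{k+1}$ and $\delta(\alpha^{(k)},\alpha^{(k+1)}) < 2^{-k-1}\varepsilon$, arranging that every band of $\sigma(\calE_{\alpha^{(k+1)}})$ sits strictly inside a band of $\sigma(\calE_{\alpha^{(k)}})$, that each band of $\sigma(\calE_{\alpha^{(k)}})$ contains many bands of $\sigma(\calE_{\alpha^{(k+1)}})$, and that the maximal band length shrinks extremely fast. If $\sigma(\calE_{\alpha^{(k)}})$ has $N_k$ bands of maximal arc length $\ell_k$, I choose parameters so that $N_k\, \ell_k^{s_k} \to 0$ for some $s_k \downarrow 0$. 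The limit $\alpha = \lim \alpha^{(k)}$ then lies in $\LP(\bbZ,\bbD)$ by completeness and satisfies $\delta(\alpha,\alpha^{(0)}) < \varepsilon$. Upper semicontinuity of the spectrum gives $\sigma(\calE_\alpha) \subseteq \bigcap_k \bigl( \text{$r_k$-neighborhood of } \sigma(\calE_{\alpha^{(k)}})\bigr)$ for a summable $r_k$, whence the band covers yield $\mathcal{H}^{s_k}(\sigma(\calE_\alpha)) \le N_k \ell_k^{s_k} \to 0$ and $\dim_H \sigma(\calE_\alpha) = 0$; the same covers realize efficient box-covers to give $\underline{\dim}_B \sigma(\calE_\alpha) = 0$. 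Cantor structure follows from the nested band scheme (perfectness and nowhere-denseness in $\partial \bbD$), and singular continuity can be secured by arranging each $\alpha^{(k)}$ to also satisfy the countably many open-dense conditions that already cut out the residual set furnished by Theorem \ref{t:extCMV:zeromeas}.

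The main obstacle is the quantitative inductive step: showing that any periodic $\beta \in \bbD^\bbZ$ with $\|\beta\|_\infty < 1$ and $q$-periodic admits, inside any prescribed $\delta$-neighborhood, a periodic $\tilde\beta$ of period $Mq$ whose spectrum carves each band of $\sigma(\calE_\beta)$ into $M$ sub-bands of length at most a prescribed fraction $\lambda$ of the parent band, for freely chosen $M \in \bbN$ and $\lambda \in (0,1)$. The commutation argument advertised in Section \ref{sec:proofs} is qualitative — it produces \emph{some} gap at a specified interior energy — and must be enhanced to guarantee uniform quantitative gap-opening at many energies across all current bands simultaneously. I expect that compactness of the set of interior band energies, together with continuity of the Floquet discriminant in the coefficient sequence, will yield uniform lower bounds on gap sizes produced by small $\delta$-perturbations via a finite-cover argument; once such a quantitative lemma is in hand, iterating with geometrically shrinking $\lambda_k$ and appropriately diagonal choices of $M_k$ drives both $\dim_H$ and $\underline{\dim}_B$ of $\sigma(\calE_\alpha)$ to zero.
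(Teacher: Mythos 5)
Your overall strategy --- an inductive approximation by periodic $\alpha^{(k)}$ with geometrically shrinking perturbations, then a covering argument --- does match the paper's. However, there are two places where you have made the argument harder than it needs to be, and one of these threatens to become a real gap if you pursue it.

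First, the ``quantitative enhancement'' you call the main obstacle is already in hand. You worry that the commutation argument is only qualitative and propose to prove a lemma that carves each band into $M$ sub-bands of length $\le \lambda$ times the parent. But the paper's Lemma~\ref{lem:OPUC:thinSpec} already delivers the quantitative input needed: for any $q$-periodic $\alpha$ and $\varepsilon>0$, there is an $N_0$ and $c_0>0$ so that for every $N\ge N_0$ one finds $\widetilde\alpha$ of period $\widetilde q = Nq$ with $\delta(\alpha,\widetilde\alpha)<\varepsilon$ and $\Leb(\sigma(\calE_{\widetilde\alpha}))\le\eop^{-c_0\widetilde q}$. The uniformity over energies comes from compactness of $\partial\bbD$ plus the finite-cover step in Lemma~\ref{lem:OPUC:resolventCover}; you don't need to reprove this or strengthen the commutation lemma. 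Combined with the elementary Floquet fact that a $\widetilde q$-periodic extended CMV matrix has at most $\widetilde q$ bands, you get, at step $k$, a covering of $\sigma(\calE_{\alpha^{(k)}})$ by $\lesssim q_k$ arcs of total length $\le \eop^{-c_0 q_k}$. Choosing periods $q_k$ growing fast (and the measure bound to be, say, $\eop^{-q_k^{1/2}}$ after appropriate choice of $N$) and $\delta$-perturbations of size $\varepsilon_k$ much smaller than the typical band length, the enlarged arcs cover $\sigma(\calE_\alpha)$ in the limit and you directly read off $\underline{\dim}_B = 0$ from $\limsup_k (\log q_k)/\sqrt{q_k}=0$, hence $\dim_H = 0$. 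This is exactly how the Dirac proof runs; no explicit band-nesting bookkeeping ($N_k$, $\ell_k$, $s_k$) is needed.

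Second, the explicit nesting requirement (``every band of $\sigma(\calE_{\alpha^{(k+1)}})$ sits strictly inside a band of $\sigma(\calE_{\alpha^{(k)}})$'') is both unnecessary and not obviously achievable with the perturbative tools at hand. The paper avoids it: closeness in Hausdorff distance suffices for the covering argument, and perfectness/nowhere-density in the limit come for free from general ergodic-operator facts (no isolated points for minimal dynamics) plus zero Lebesgue measure, rather than from a nested Cantor construction. Likewise, for singular continuity you should not try to intersect with the residual set from Theorem~\ref{t:extCMV:zeromeas} (which would only give genericity, not let you control a specific sequence); instead choose $\varepsilon_k$ small enough so that $\alpha = \lim \alpha^{(k)}$ satisfies the Gordon-type criterion for CMV matrices (cf.\ \cite{Fillman2017PAMS}), exactly as the Dirac proof does via \eqref{eq:dirac:hd0epschoice}.

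In short: your blueprint is sound, but the quantitative lemma you flag as a missing piece is precisely Lemma~\ref{lem:OPUC:thinSpec}, and trading the band-nesting framework for the Lebesgue-measure-plus-band-count covering argument makes the proof both correct and considerably shorter.
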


The paper is organized as follows. We recall some general facts about Dirac operators in Section~\ref{sec:dirac}. We prove Theorems~\ref{t:dirac:zeromeas} and \ref{t:dirac:zerohd} in Section~\ref{sec:proofs} and Theorems~\ref{t:extCMV:zeromeas} and \ref{t:extCMV:zerohd} in Section~\ref{sec:OPUC}. 

\subsection*{Acknowledgements} We are grateful to Christian Sadel and Hermann Shulz-Baldes for helpful conversations. J.F.\ thanks the American Institute of Mathematics for hospitality and support during a January 2022 visit, during which part of this work was completed.

\section{Preparatory Work for Dirac Operators} \label{sec:dirac}

\subsection{Generalities}
In this section, we consider various properties of Dirac operators on $\bbR$ that are essential in the proofs of Theorems~\ref{t:dirac:zeromeas} and \ref{t:dirac:zerohd} (see also \cite{LevitanSargsjan,GrebertKappeler2014,ClarkGesztesy,ClarkGesztesy2}).

We will denote
\begin{equation} \label{eq:dirac:variousJs}
	j = \begin{bmatrix} -1 & 0 \\ 0 & 1 \end{bmatrix}, \qquad  J = \begin{bmatrix} 0 & \iop  \\ -\iop  & 0 \end{bmatrix}, \qquad  \cJ =  \begin{bmatrix} 0 & 1 \\ 1 & 0 \end{bmatrix}
\end{equation}
Note that $\cJ, -J,$ and $-j$ are the Pauli matrices, often denoted $\sigma_1, \sigma_2,$ and $\sigma_3,$ respectively. The Dirac operator $\Lambda_\varphi$ with operator data $\varphi:I \to \bbC$ is defined by the differential expression 
\[
\Lambda_\varphi = -\iop j\frac{{\rmd}}{{\rmd}x} + \Phi(x), \qquad
 \Phi(x) = \begin{bmatrix} 0 & \varphi(x) \\ \overline{\varphi(x)} & 0 \end{bmatrix},
\]
together with an appropriately defined domain in the Hilbert space $L^2(I, \bbC^2)$. On the line $I = \bbR$, if $\varphi$ is uniformly locally $L^2$ in the sense that
\begin{equation}\label{UniformlyLocallyL2}
\sup_{x\in\bbR} \int_x^{x+1} \lvert \varphi(t) \rvert^2 \, {\rmd}t < \infty,
\end{equation}
then the operator is limit-point at $\pm \infty$ and  $\Lambda_\varphi$ defines a self-adjoint operator on the domain $D(\Lambda_\varphi) = H^1(\bbR, \bbC^2)$. We employ a standard abuse of notation here, writing $\Lambda_\varphi$ both for the self-adjoint operator on $D(\Lambda_\varphi)$ and the differential expression, which may act on any function with at least one (weak) derivative.

For any $z \in \bbC$, the Dirac eigenequation on an interval $I \subseteq \bbR$ is given by 
\begin{align} \label{eq:general:eigenEq}
	\Lambda_\varphi U(x,z) = z U(x,z), \quad U(\cdot,z) \in \AC_\loc(I,\bbC^2),
\end{align}
where $\AC_\loc(I,\bbC^2)$ denotes those functions that are absolutely continuous on compact subintervals of $I$. A solution $U$ of \eqref{eq:general:eigenEq} is called an \emph{eigensolution} at $z$. The \emph{Wronskian} of any two functions $U,V \in \AC_\loc(I, \bbC^2)$ is defined by
\[
W[U,V](x) = U(x)^\top JV(x) = \iop (U_1(x)V_2(x)-U_2(x)V_1(x)), \quad x \in I.
\]
The limit-point conditions also state that the boundary Wronskian at $\pm\infty$ is trivial, i.e.\ for all $U, V\in D(\Lambda_\varphi)$, 
\[
\lim_{x\to \pm \infty} W[U,V](x) = 0.
\]
By computing (using $\cJ = \iop jJ$)
\begin{equation} \label{eq:dirac:wronskConserve}
	W[U,V]'(x) = (\Lambda_\varphi U(x))^\top  \cJ V(x) - U(x)^\top   \cJ  (\Lambda_\varphi V(x)),
\end{equation}
we can see that if $U$ and $V$ are eigensolutions at a given $z \in \bbC$, $W[U,V](x)$ is a constant independent of $x$. Moreover, $W[U,V]=0$ if and only if $U$ and $V$ are linearly dependent.

A \emph{Weyl solution} at $z \in \bbC$ for the endpoint $\pm\infty$ is a nontrivial eigensolution $\psi^\pm(x,z)$ at $z$ that is square-integrable on the half-line $[0,\pm \infty)$; due to \eqref{UniformlyLocallyL2}, there is a unique (up to normalization) Weyl solution at each endpoint $\pm\infty$ for every $z\in \bbC \setminus \sigma(\Lambda_\varphi)$, and $W[\psi^+, \psi^-] \neq 0$. We will write formulas in a normalization-independent way, unless a normalization for $\psi^\pm$ is explicitly stated.

Weyl functions generate the \emph{Green function} via
\begin{equation} \label{eq:generaldirac:greenFctFromWeyl}
G(x,y;z,\varphi) = 
	\begin{cases}
		\frac{1}{W[\psi^+,\psi^-]} \psi^-(x,z) \psi^+(y,z)^\top \mathcal{J}, \ x < y \\
		\frac{1}{W[\psi^+, \psi^-]} \psi^+(x,z) \psi^-(y,z)^\top \mathcal{J}, \ x > y
	\end{cases}
\end{equation}
which is the integral kernel of $(\Lambda_\varphi - z)^{-1}$ in the sense that
\begin{equation} \label{eq:generaldirac:greenFctIntKer}
((\Lambda_\varphi - z)^{-1} f )(x) = \int G(x,y;z,\varphi) f(y) \, {\rmd}y, \qquad \forall f\in D(\Lambda_\varphi),
\end{equation}
which the reader can check by a direct calculation by differentiating under the integral sign.

\noindent Given $x\in\bbR$ and $z \in \bbC$, let $A_z(x,\varphi)$ be the matrix solution of 
\begin{align}\label{eq:EV}
\Lambda_\varphi A_z(x,\varphi) = z A_z(x,\varphi),\quad A_z(0,\varphi)= I.
\end{align}
These are transfer matrices starting from $0$; defining  the matrices
\begin{equation} \label{eq:dirac:A_zyxDef}
A_z(y,x,\varphi)=A_z(y,\varphi)A_z(x,\varphi)^{-1}
\end{equation}
for $x,y \in \bbR$, $z \in \bbC$, one has
\begin{equation}
U(y) = A_z(y,x,\varphi)U(x)
\end{equation}
whenever $U$ is an eigensolution of $\Lambda_\varphi$ at $z$. 

Notice that conservation of the Wronskian as in \eqref{eq:dirac:wronskConserve} implies $\det(A_z(y,x,\varphi)) = 1$ for all $x$, $y$, $z$, and $\varphi$. For $z, w\in\bbC$, differentiating $A_w(y,x,\varphi)^*jA_z(y,x,\varphi)$ with respect to $y$ and using \eqref{eq:EV} we get 
\begin{equation} \label{eq:dirac:A*jAderiv}
\partial_y(A_w(y,x,\varphi)^*jA_z(y,x,\varphi)) = \iop (z-\overline{w})A_w(y,x,\varphi)^* A_z(y,x,\varphi)
\end{equation}
which (using $A_w(x,x,\varphi) = A_z(x,x,\varphi) = I$) leads to
\begin{equation} \label{eq:dirac:A*jA}
A_w(y,x,\varphi)^*jA_z(y,x,\varphi)-j
= \iop (z-\overline{w})\int_x^yA_w(t,x,\varphi)^*A_z(t,x,\varphi) \, {\rmd}t.
\end{equation}
In particular, applying \eqref{eq:dirac:A*jA} at a real parameter $z =w = \lambda \in \bbR$, one has
\[
A_\lambda(y,x,\varphi)^*jA_\lambda(y,x,\varphi) = j,
\]
which (together with $\det A_\lambda = 1$) implies $A_\lambda(y,x,\varphi) \in \SU(1,1)$.

The  Schur functions associated to $\varphi$ at the point $x \in \bbR$ are defined using the Weyl solutions as
\begin{equation}
s_+(x,z) = \frac{ \psi^+_1(x,z)} {\psi^+_2(x,z)}, \qquad s_-(x,z) = \frac{ \psi^-_2(x,z)} {\psi^-_1(x,z)}.
\end{equation}
For each fixed $x \in \bbR$, $s_\pm(x,\cdot)$ is an analytic function from the upper half-plane $\bbC_+ = \{z : \Im z>0\}$ to $\bbD$ (compare \cite[Lemma~2.1]{EGL} and surrounding discussion). These can be characterized by a Weyl disk formalism: Weyl disks in this setting can be defined in $\hat{\bbC} = \bbC \cup\{\infty\}$  as
\[
D(x,z) = \left\{ w \in \hat\bbC : \begin{bmatrix} w \\ 1 \end{bmatrix}^* A_z(x)^* j A_z(x) \begin{bmatrix} w \\ 1 \end{bmatrix} \ge 0 \right\},
\]
with the natural convention $[\infty,1] =[1,0]$ in projective coordinates. The relation \eqref{eq:dirac:A*jA} ensures their nesting property, $D(x_2, z) \subseteq D(x_1,z)$ whenever $x_1 < x_2$. We are in the limit-point case so one has
\begin{equation}
\{ s_+(0,z) \} = \bigcap_{x \ge 0} D(x,z).
\end{equation}

We will now discuss some foundational results with proofs, including a Combes--Thomas estimate and a Schnol's theorem in the Dirac setting. We formulate some results for $\psi^+$, but analogous results hold for $\psi^-$.

\begin{lemma} \label{lemmaNontrivialMultiple}
Suppose $\varphi:\bbR \to \bbC$ obeys \eqref{UniformlyLocallyL2},  let $\psi^+(x,z)$ be a Weyl solution at $+\infty$, and suppose $z \notin \sigma(\Lambda_\varphi)$. For any $[c,d] \subseteq \bbR$, there exists $f \in L^2(\bbR, \bbC^2)$ such that $f \chi_{[c,d]} = f$ and $(\Lambda_\varphi - z)\inv f$ is a nontrivial constant multiple of $\psi^+$ on $[d,\infty)$.
\end{lemma}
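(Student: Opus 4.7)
The plan is to read off the conclusion directly from the Green function representation \eqref{eq:generaldirac:greenFctFromWeyl}--\eqref{eq:generaldirac:greenFctIntKer}. Given any $f \in L^2(\bbR,\bbC^2)$ with $\supp f \subseteq [c,d]$ and any $x > d$, one has $x > y$ for every $y \in \supp f$, so only the $x > y$ branch of \eqref{eq:generaldirac:greenFctFromWeyl} contributes, yielding
\begin{equation*}
\bigl((\Lambda_\varphi - z)^{-1} f\bigr)(x) \;=\; \frac{\psi^+(x,z)}{W[\psi^+,\psi^-]} \int_c^d \psi^-(y,z)^\top \mathcal{J} f(y) \, {\rmd}y \;=:\; c_0(f)\, \psi^+(x,z).
\end{equation*}
The integral formula \eqref{eq:generaldirac:greenFctIntKer} is stated for $f \in D(\Lambda_\varphi)$, but it extends to arbitrary $f \in L^2(\bbR,\bbC^2)$ by density of $D(\Lambda_\varphi)$ together with boundedness of the resolvent off the spectrum. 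The proof therefore reduces to exhibiting $f$ supported in $[c,d]$ with $c_0(f) \neq 0$.

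Now $f \mapsto c_0(f)$ is a bounded linear functional on $L^2([c,d],\bbC^2)$, and I will show it is not the zero functional. Its representing (vector-valued) kernel with respect to the unconjugated pairing is $\mathcal{J}^\top \psi^-(\cdot,z) = \mathcal{J}\psi^-(\cdot,z)$ (using $\mathcal{J}^\top = \mathcal{J}$ from \eqref{eq:dirac:variousJs}). If this kernel vanished a.e.\ on $[c,d]$, then invertibility of $\mathcal{J}$ would force $\psi^-(y_0,z)=0$ for some $y_0 \in [c,d]$; but $\psi^-(\cdot,z)$ solves the linear first-order system \eqref{eq:general:eigenEq}, so uniqueness for the initial value problem would then give $\psi^- \equiv 0$, contradicting the definition of $\psi^-$ as a nontrivial Weyl solution. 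Hence some admissible $f$ produces $c_0(f) \neq 0$.

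Finally, the identity $(\Lambda_\varphi - z)^{-1} f = c_0(f) \psi^+(\cdot,z)$, established on $(d,\infty)$, extends to $[d,\infty)$ by continuity, since $(\Lambda_\varphi - z)^{-1}f \in D(\Lambda_\varphi) = H^1(\bbR,\bbC^2)$ embeds in $C(\bbR,\bbC^2)$ and $\psi^+(\cdot,z)$ is continuous. The only step that is not a mechanical consequence of \eqref{eq:generaldirac:greenFctFromWeyl}--\eqref{eq:generaldirac:greenFctIntKer} is the nonvanishing of the functional $c_0$, and that hinges only on the uniqueness theorem for the linear ODE \eqref{eq:general:eigenEq}; I do not foresee a genuine obstacle.
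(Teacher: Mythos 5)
Your proof is correct and takes essentially the same approach as the paper's: read off the conclusion from the Green function formula for $x > d$, and then note that the linear functional $f \mapsto c_0(f)$ on $L^2([c,d],\bbC^2)$ cannot be identically zero because $\psi^-$ is a nontrivial solution. You simply spell out a few routine steps that the paper leaves implicit (the density argument extending \eqref{eq:generaldirac:greenFctIntKer} to all of $L^2$, the ODE uniqueness theorem behind the nonvanishing of $\psi^-$ on any interval, and the continuity extension from $(d,\infty)$ to $[d,\infty)$).
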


\begin{proof}
If $f \in L^2(\bbR,\bbC^2)$ with $f \chi_{[c,d]} = f$, then \eqref{eq:generaldirac:greenFctFromWeyl} and \eqref{eq:generaldirac:greenFctIntKer} yield the following for $x > d$:
\begin{align*}
((\Lambda_\varphi - z)^{-1} f )(x)
&  = \int G(x,y;z,\varphi) f(y) \, {\rmd}y\\ 
& =   \frac 1{ W[\psi^+, \psi^-] } \int  \psi^+(x, z) \psi^-( y,z)^\top \cJ f(y) \, {\rmd}y. \\ 
& = C \psi^+(x,z),
\end{align*}
where $C = C(f) = \frac{1}{W} \int_c^d \psi^{-}(y,z)^\top \mathcal{J} f(y) \, \rmd y$. Since $\psi^-$ does not vanish identically, $f \in L^2([c,d], \bbC^2)$ can be chosen so that $C(f)$ is nonzero.
\end{proof}

The proof applies more generally, for any kind of left endpoint; in particular, the same proof yields the following:

\begin{lemma} \label{lemmaNontrivialMultiple2}
Suppose $\varphi: [0,\infty) \to \bbC$ obeys
\[
\sup_{x \ge 0} \int_x^{x+1} \lvert \varphi(t) \rvert^2 \, {\rmd}t < \infty,
\]
 let $\psi^+(x,z)$ be a Weyl solution at $+\infty$, and let $\Lambda_\varphi$ denote the Dirac operator on $[0,\infty)$ with a Dirichlet boundary condition $f_1(0) = f_2(0)$ at zero. Suppose $z \notin \sigma(\Lambda_\varphi)$. For any $[c,d] \subseteq [0,\infty)$, there exists $f \in L^2( [0,\infty), \bbC^2)$ such that $f \chi_{[c,d]} = f$ and $(\Lambda_\varphi - z)\inv f$ is a nontrivial constant multiple of $\psi^+$ on $[d,\infty)$.
\end{lemma}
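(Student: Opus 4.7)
The plan is to mirror the proof of Lemma~\ref{lemmaNontrivialMultiple} essentially verbatim, with only the ``left endpoint'' role of the Weyl solution at $-\infty$ replaced by a solution satisfying the Dirichlet boundary condition at $0$. Concretely, I would first observe that because $z \notin \sigma(\Lambda_\varphi)$, the Dirichlet boundary-value problem at $0$ is nondegenerate: there exists a nontrivial solution $\psi^-(x,z)$ of $\Lambda_\varphi U = zU$ on $[0,\infty)$ satisfying $\psi^-_1(0,z) = \psi^-_2(0,z)$, and this $\psi^-$ is linearly independent of the Weyl solution $\psi^+$ at $+\infty$ (were they dependent, $\psi^+$ would itself satisfy the boundary condition and be square-integrable, forcing $z$ to be an eigenvalue). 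In particular $W[\psi^+,\psi^-] \neq 0$.

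With this $\psi^-$ in hand, the Green function of the half-line operator $\Lambda_\varphi$ with the Dirichlet condition at $0$ is given by exactly the same formula as in \eqref{eq:generaldirac:greenFctFromWeyl}, and the resolvent identity \eqref{eq:generaldirac:greenFctIntKer} holds for $f \in D(\Lambda_\varphi)$. The verification is the standard one: the formula for $(\Lambda_\varphi - z)^{-1}f$ yields a function which is absolutely continuous, satisfies $(\Lambda_\varphi - z) g = f$ in the distributional sense, lies in $L^2$ by the square-integrability of $\psi^+$ at $+\infty$, and satisfies the Dirichlet condition at $0$ by construction of $\psi^-$.

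Next, for any $f \in L^2([0,\infty),\bbC^2)$ with $f\chi_{[c,d]} = f$ and any $x > d$, the same integral computation as in the previous lemma gives
\[
((\Lambda_\varphi - z)^{-1} f)(x) = \frac{1}{W[\psi^+,\psi^-]}\, \psi^+(x,z) \int_c^d \psi^-(y,z)^\top \cJ f(y)\,\rmd y = C(f)\,\psi^+(x,z).
\]
Thus it only remains to exhibit $f$ supported in $[c,d]$ with $C(f) \neq 0$. Since $\psi^-$ is a nontrivial solution of a first-order linear ODE system, uniqueness forces $\psi^-(y,z) \neq 0$ for every $y \in [0,\infty)$; since $\cJ$ is invertible, the row vector $\psi^-(y,z)^\top \cJ$ is nonzero for every $y \in [c,d]$. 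Choosing, for instance, $f(y) = \overline{\cJ^{\top}\psi^-(y,z)}\,\chi_{[c,d]}(y)$ makes the integrand pointwise nonnegative and nonzero, so $C(f) \neq 0$.

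I don't anticipate a genuine obstacle; the entire adaptation consists of checking that the solution $\psi^-$ on the half-line plays the same role as the Weyl solution at $-\infty$ did on the full line, which is immediate from the hypothesis $z \notin \sigma(\Lambda_\varphi)$. The only mild subtlety is that, unlike on the full line, here $\psi^-$ is defined purely by an initial-value (boundary) condition rather than by integrability, but this actually makes the non-vanishing of $\psi^-$ easier rather than harder to establish.
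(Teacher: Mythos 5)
Your proposal is correct and matches what the paper intends: the paper's own "proof" of this lemma is the single sentence that the argument for Lemma~\ref{lemmaNontrivialMultiple} carries over with the left endpoint condition at $0$ playing the role of $\psi^-$, and your writeup simply spells out that adaptation. The only (harmless) addition is your observation that $\psi^-$ is nowhere vanishing by ODE uniqueness, which is stronger than the "not identically zero" that the original argument uses but arrives at the same conclusion.
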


\begin{lemma}
\label{Dirac.Weyl.exp.decays.lemma}
If $\varphi$ obeys \eqref{UniformlyLocallyL2}, then for any $z  \in \bbC \setminus \sigma_{\rm{ess}}(\Lambda_\varphi)$, the Weyl solution $\psi^+(x,z )$ obeys $\|\psi^+(x,z )\| = O(\eop^{-\gamma x})$ as $x \to +\infty$, for some $\gamma > 0$.
\end{lemma}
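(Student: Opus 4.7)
\emph{Proof plan.} The statement is a standard Combes--Thomas estimate adapted to the Dirac setting, combined with the device of Lemma~\ref{lemmaNontrivialMultiple} to convert resolvent decay into pointwise decay of $\psi^+$. Throughout, I treat first the generic case $z\in\rho(\Lambda_\varphi)$, and then indicate how to deal with $z\in\sigma(\Lambda_\varphi)\setminus\sigma_{\text{ess}}(\Lambda_\varphi)$ (an isolated eigenvalue).

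Assume first $z\in\rho(\Lambda_\varphi)$, and let $M_\eta$ denote multiplication by $e^{\eta x}$ for $\eta>0$. A direct computation (using $e^{\eta x}\partial_x e^{-\eta x}=\partial_x-\eta$) gives, on a suitable dense domain,
\[
M_\eta(\Lambda_\varphi-z)M_{-\eta}=\Lambda_\varphi-z+\iop\eta j.
\]
Factoring, $\Lambda_\varphi-z+\iop\eta j=(\Lambda_\varphi-z)\bigl(I+\iop\eta(\Lambda_\varphi-z)\inv j\bigr)$, and since $\|(\Lambda_\varphi-z)\inv\|=1/d(z,\sigma(\Lambda_\varphi))$ and $\|j\|=1$, the second factor is invertible by Neumann series for any $0<\eta<d(z,\sigma(\Lambda_\varphi))$. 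Fix such an $\eta$. Then $(\Lambda_\varphi-z+\iop\eta j)\inv$ is bounded on $L^2(\bbR,\bbC^2)$.

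Next, apply Lemma~\ref{lemmaNontrivialMultiple} (with any fixed $[c,d]\subseteq\bbR$) to obtain $f\in L^2$ with $\supp f\subseteq[c,d]$ and $(\Lambda_\varphi-z)\inv f=\kappa\psi^+(\cdot,z)$ on $[d,\infty)$ for some $\kappa\neq 0$. Since $M_\eta f=e^{\eta x}f$ has compact support, it lies in $L^2$, and the intertwining identity above yields
\[
M_\eta(\Lambda_\varphi-z)\inv f=(\Lambda_\varphi-z+\iop\eta j)\inv(M_\eta f)\in L^2(\bbR,\bbC^2).
\]
Restricting to $[d,\infty)$, this forces $e^{\eta x}\psi^+(x,z)\in L^2([d,\infty),\bbC^2)$.

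The last step is to upgrade this weighted $L^2$-bound to a pointwise bound. For $x\ge d$ and $y\in[x,x+1]$, the eigensolution relation gives $\psi^+(y,z)=A_z(y,x,\varphi)\psi^+(x,z)$; from \eqref{eq:EV} and \eqref{UniformlyLocallyL2} the matrices $A_z(y,x,\varphi)^{\pm 1}$ are uniformly bounded for $x\in\bbR$, $y\in[x,x+1]$, by a constant $K=K(z,\varphi)$. Hence
\[
\|\psi^+(x,z)\|^2\le K^2\int_x^{x+1}\|\psi^+(y,z)\|^2\,\rmd y\le K^2 e^{-2\eta x}\int_x^{x+1}e^{2\eta y}\|\psi^+(y,z)\|^2\,\rmd y,
\]
and the final integral is bounded by the global quantity $\int_d^\infty e^{2\eta y}\|\psi^+(y,z)\|^2\,\rmd y<\infty$, giving $\|\psi^+(x,z)\|=O(e^{-\eta x})$. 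For $z\in\sigma(\Lambda_\varphi)\setminus\sigma_{\text{ess}}(\Lambda_\varphi)$ the Weyl solution is (up to scalar) the genuine $L^2$ eigenfunction, and the same weighted argument applies after replacing $(\Lambda_\varphi-z)\inv$ by the reduced resolvent of $\Lambda_\varphi-z$ on the orthogonal complement of the finite-rank Riesz projection $P_z$; the spectrum of $\Lambda_\varphi(I-P_z)$ avoids $z$, so Combes--Thomas applies there verbatim. The main subtlety is precisely this isolated-eigenvalue case, since the naive conjugation argument would fail at $z\in\sigma$; passing to the reduced resolvent is the standard way around it.
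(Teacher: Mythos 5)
Your proof captures the right high-level idea (Combes--Thomas via exponential conjugation plus the support lemma), and on the face of it looks close to the paper's argument, but there are two genuine gaps that the paper's version is specifically engineered to avoid.

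\textbf{(1) The whole-line intertwining step is circular.} You write that the identity $M_\eta(\Lambda_\varphi-z)M_{-\eta}=\Lambda_\varphi-z+\iop\eta j$ ``yields'' $M_\eta(\Lambda_\varphi-z)^{-1}f=(\Lambda_\varphi-z+\iop\eta j)^{-1}(M_\eta f)\in L^2$. On $\bbR$ both $M_\eta$ and $M_{-\eta}$ are unbounded, so this formal inversion does not come for free. To pass from $(\Lambda_\varphi-z)u=f$ to $(\Lambda_\varphi-z+\iop\eta j)(M_\eta u)=M_\eta f$ you must already know $M_\eta u\in H^1(\bbR,\bbC^2)$ --- but $M_\eta u\in L^2$ near $+\infty$ is exactly the decay we are trying to establish. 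The argument can be repaired (set $g:=(\Lambda_\varphi-z+\iop\eta j)^{-1}(M_\eta f)\in L^2$, observe $e^{-\eta x}g$ and $u$ both solve $(\Lambda_\varphi-z)v=f$ distributionally, compare them on $[d,\infty)$, and argue the resulting constant is nonzero for a suitable $f$), but this requires an additional nondegeneracy argument you have not supplied. The paper removes the issue entirely by working on the half-line $[0,\infty)$, where $e^{-\gamma x}$ is a \emph{bounded} multiplier preserving $D(\Lambda_0)$, so the conjugated operator $\Lambda_\gamma=e^{\gamma x}\Lambda_0 e^{-\gamma x}$ is well-defined on $D(\Lambda_0)$ and $(\Lambda_\gamma-z)^{-1}(e^{\gamma x}f)=e^{\gamma x}(\Lambda_0-z)^{-1}f$ is a clean identity of elements of $L^2([0,\infty))$.

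\textbf{(2) The isolated-eigenvalue case is not ``standard'' here.} When $z\in\sigma(\Lambda_\varphi)\setminus\sigma_{\rm ess}(\Lambda_\varphi)$, the Weyl solutions $\psi^+$ and $\psi^-$ are proportional, so $W[\psi^+,\psi^-]=0$ and the Green-function representation \eqref{eq:generaldirac:greenFctFromWeyl} underlying Lemma~\ref{lemmaNontrivialMultiple} degenerates. Moreover, the reduced resolvent acts on $\operatorname{ran}(I-P_z)$, but $(I-P_z)f$ is no longer compactly supported (since $P_zf$ is a multiple of the full-support eigenfunction), so $(\Lambda_\varphi-z)\big((I-P_z)$-reduced resolvent$\big)f$ does not vanish on $[d,\infty)$ and the conclusion of Lemma~\ref{lemmaNontrivialMultiple} does not carry over. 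Nor does the conjugation $M_\eta$ preserve $\operatorname{ran}(I-P_z)$. The claim that ``Combes--Thomas applies there verbatim'' therefore needs real work that you have not done. The paper's device --- replacing $\Lambda_\varphi$ by a half-line operator $\Lambda_0$ with a boundary condition $[1\ \omega]f(0)=0$ chosen so that $[1\ \omega]\psi^+(0,z)\neq0$ --- simultaneously resolves this: since the $L^2$-at-$+\infty$ solution $\psi^+$ fails the boundary condition, $z$ is not an eigenvalue of $\Lambda_0$, and because $\sigma_{\rm ess}(\Lambda_0)\subseteq\sigma_{\rm ess}(\Lambda_\varphi)$ we get $z\in\rho(\Lambda_0)$ outright, so no isolated-eigenvalue case remains. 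You should adopt this half-line construction rather than attempting the reduced-resolvent route.

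Aside from these two points, your upgrade from weighted-$L^2$ decay to pointwise decay via transfer matrices and Gronwall is fine and is an acceptable alternative to the paper's Sobolev embedding argument.
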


\begin{proof}
Since $\psi^+$ is a nontrivial eigensolution, choose $\omega \in \partial\bbD$ such that
\begin{equation} \label{eq:diracgen:weyldecayOmegaChoice}
\begin{bmatrix} 1 & \omega
\end{bmatrix} \psi^+(0,z ) \neq 0
\end{equation}
(indeed, equality can hold for at most one value of $\omega$). Fix such an $\omega$ and define a half-line Dirac operator $\Lambda_0$ by the same differential expression as $\Lambda$ on the domain
\[
D(\Lambda_0) = \set{ f \in H^1( [0,\infty), \bbC^2 ) : \begin{bmatrix} 1 & \omega \end{bmatrix} f(0) = 0 }.
\]
Due to \eqref{eq:diracgen:weyldecayOmegaChoice} and the assumption $z  \notin \sigma_{\rm ess}(\Lambda_\varphi)$, we have $z  \notin \sigma(\Lambda_0)$. Given $\gamma>0$, define $\Lambda_\gamma$ as $\Lambda_\gamma := \eop^{\gamma x} \Lambda_0 \eop^{-\gamma x}$ with domain $D(\Lambda_\gamma) = D(\Lambda_0)$.
 To see that $\Lambda_\gamma$ with such a domain defines a (non-self adjoint) operator, note that the difference  
\begin{equation}
	\Lambda_\gamma - \Lambda_0 = \iop \gamma j,
\end{equation}
is a bounded (non-self adjoint) operator. 

If $V \in D(\Lambda_\gamma)$, then using $z  \notin \sigma(\Lambda_0)$ and \mbox{$\|V\| \leq \|(\Lambda_0 - z )\inv\| \|(\Lambda_0 - z )V\|$} yields  
\[
	\|(\Lambda_\gamma - \Lambda_0)V\| = \|\iop \gamma j V\| =\gamma \|V\| \leq C\gamma \|(\Lambda_0 - z )V\|
\]
with $C =\|(\Lambda_0-z )^{-1}\|  > 0$. Taking $\gamma >0$ sufficiently small, we obtain 
\[
	\|(\Lambda_\gamma - \Lambda_0)(\Lambda_0 - z )\inv\| < 1,
\]
so that 
\[
	\Lambda_\gamma - z  = [(\Lambda_\gamma - \Lambda_0)(\Lambda_0 - z )\inv + I](\Lambda_0 - z )
\]
is invertible. By Lemma~\ref{lemmaNontrivialMultiple2}, there exists $f \in L^2([0,\infty); \bbC^2)$ such that $\supp f \subseteq [0,1]$ and \mbox{$(\Lambda_0 - z )\inv f$} is a nontrivial multiple of $\psi^+$ on $[1,\infty)$. Then, on $[1,\infty)$,
\[
	(\Lambda_\gamma - z )\inv (\eop^{\gamma x}f) 
\]
is a nontrivial multiple of $\eop^{\gamma x}\psi$ on $[1,\infty)$. Since $(\Lambda_\gamma - z )\inv (\eop^{\gamma x}f) \in D(\Lambda_0) \subseteq L^\infty([0,\infty); \bbC^2)$ by a Sobolev embedding theorem, $\|\psi^+(x,z )\| = O(\eop^{-\gamma x})$ as $x \to \infty$.
\end{proof}

\begin{theorem}[Schnol's Theorem] \label{t:schnolForDirac}
Let $\Lambda = \Lambda_\varphi$ be a Dirac operator $\varphi$ satisfying \eqref{UniformlyLocallyL2}. For a fixed $\kappa > 1/2$, let $S_\kappa$ denotes the set of $\lambda \in \bbC$ for which there exists a nontrivial eigensolution $U(x,\lambda)$ obeying $\|U(x,\lambda)\| = O(|x|^\kappa)$ as $x \to \pm \infty$. Then:
\begin{enumerate}[label={\rm(\alph*)}]
	\item $S_\kappa \subseteq \sigma(\Lambda)$;
	\smallskip
	
	\item The maximal spectral measure of $\Lambda$ is supported on $S_\kappa$;
	\smallskip
	
	\item $\overline{S_\kappa} = \sigma(\Lambda)$.
\end{enumerate}
\end{theorem}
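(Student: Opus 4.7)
The plan is to prove (a) via a Weyl-sequence argument built from cutoffs of the polynomially bounded eigensolution, (b) via the standard eigenfunction expansion theory for $\Lambda_\varphi$, and (c) as a direct consequence of (a) and (b).

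For (a), let $U(\cdot,\lambda)$ be a nontrivial eigensolution with $\|U(x,\lambda)\|=O(|x|^\kappa)$ and set $F(N):=\int_{-N}^N\|U(x,\lambda)\|^2\,\rmd x$; polynomial growth gives $F(N)=O(N^{2\kappa+1})$. Pick $\chi_N\in C^\infty_c(\bbR)$ equal to $1$ on $[-N,N]$ and supported in $[-N-1,N+1]$, with $\|\chi_N'\|_\infty$ uniformly bounded in $N$. Because $\Lambda_\varphi$ is first order,
\[
(\Lambda_\varphi-\lambda)(\chi_N U(\cdot,\lambda))=-\iop j\,\chi_N'\,U(\cdot,\lambda),
\]
so for $U_N:=\chi_N U(\cdot,\lambda)$ we get $\|(\Lambda_\varphi-\lambda)U_N\|^2\le C(F(N+1)-F(N-1))$ and $\|U_N\|^2\ge F(N)$. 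If the ratio $(F(N+1)-F(N-1))/F(N)$ stayed bounded below by some $\epsilon>0$ for all large $N$, then $F(N+1)\ge (1+\epsilon)F(N-1)$ would force exponential growth of $F$, contradicting the polynomial bound; hence along some subsequence $N_k\to\infty$, $\|(\Lambda_\varphi-\lambda)U_{N_k}\|/\|U_{N_k}\|\to 0$, placing $\lambda\in\sigma(\Lambda_\varphi)$.

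For (b), let $\mu$ be a maximal scalar spectral measure of $\Lambda_\varphi$. The standard eigenfunction expansion produces a $2\times 2$ matrix-valued spectral measure $\Sigma$ (with $\tr\Sigma$ equivalent to $\mu$) whose generalized eigenfunctions at $\lambda$ are the columns of $A_\lambda(\cdot,\varphi)$, and the associated Plancherel identity yields the local trace bound
\[
\int\!\int_{-R}^R \|A_\lambda(x,\varphi)\|^2\,\rmd x\,\rmd\mu(\lambda)\le C\cdot R, \qquad R\ge 1,
\]
reflecting linear growth of the density of states on bounded intervals. Multiplying by $(1+x^2)^{-\kappa}$, which is integrable since $\kappa>1/2$, and applying Fubini gives $\int\|A_\lambda(x,\varphi)\|^2(1+x^2)^{-\kappa}\,\rmd x<\infty$ for $\mu$-a.e.\ $\lambda$. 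By the cocycle identity \eqref{eq:dirac:A_zyxDef} and uniform boundedness of $A_\lambda(y,x,\varphi)$ for $|y-x|\le 1$ (a consequence of $\varphi\in L^\infty$), this weighted integrability upgrades to the pointwise bound $\|A_\lambda(x,\varphi)\|=O(|x|^\kappa)$. Then $U(x,\lambda):=A_\lambda(x,\varphi)w$ is a nontrivial polynomially bounded eigensolution for any nonzero $w\in\bbC^2$, so $\lambda\in S_\kappa$.

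Part (c) is then immediate: $\sigma(\Lambda_\varphi)$ is closed and contains $S_\kappa$ by (a), so $\overline{S_\kappa}\subseteq\sigma(\Lambda_\varphi)$, while $\sigma(\Lambda_\varphi)=\supp\mu\subseteq\overline{S_\kappa}$ by (b). The hard part will be the local trace estimate in (b), that is, setting up the matrix-valued spectral theory for $\Lambda_\varphi$ and extracting the integrated Plancherel bound of order $O(R)$ on $\|A_\lambda(x,\varphi)\|^2$ against the maximal spectral measure. Once this is in hand, the upgrade from weighted $L^2$ integrability to pointwise polynomial bounds on transfer matrices is automatic from $\varphi\in L^\infty$, and the arguments in (a) and (c) are standard.
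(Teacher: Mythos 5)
Your proof of (a) takes a genuinely different (and correct) route from the paper's. You build a Weyl sequence by cutting off the polynomially bounded eigensolution and invoke the standard dichotomy: if the boundary-layer ratio $\bigl(F(N+1)-F(N-1)\bigr)/F(N)$ stayed bounded below, $F$ would grow exponentially, contradicting $F(N)=O(N^{2\kappa+1})$. The paper instead uses its Combes--Thomas estimate (Lemma~\ref{Dirac.Weyl.exp.decays.lemma}): for $\lambda\notin\sigma(\Lambda)$ the Weyl solutions $\psi^\pm$ decay exponentially, so the constant Wronskians $W[U,\psi^\pm]$ tend to $0$ and hence vanish, forcing $\psi^+$ and $\psi^-$ to be parallel, a contradiction. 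Your version is more self-contained (no Combes--Thomas input); the paper's is shorter given that lemma is already in hand. Part (c) you handle the same way the paper does.

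Your (b) has the paper's architecture, but the estimate you yourself flag as ``the hard part'' is not correct as written, in two respects. First, dropping the resolvent weight loses control of the $\lambda$-integral: with $\int \rmd\mu(\lambda)$ over all of $\bbR$, the $W(\lambda)$-traced version of your bound would equal $\tr\bigl(\chi_{[-R,R]}E(\bbR)\chi_{[-R,R]}\bigr)=\tr(\chi_{[-R,R]})=\infty$, and there is no ``density of states'' to appeal to for a single, non-ergodic $\varphi$. Second, Plancherel does not control $\|A_\lambda(x)\|^2$ but rather $e_k^*A_\lambda(x)W(\lambda)A_\lambda(x)^*e_k$; when $W(\lambda)$ is rank one (e.g.\ at an $L^2$ eigenvalue) the complementary column of $A_\lambda$ typically grows exponentially, so the upgrade ``$\|A_\lambda(x,\varphi)\|=O(|x|^\kappa)$'' is false for such $\lambda$ even though $\lambda\in S_\kappa$. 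What the paper proves is the $x$-uniform, weighted bound
\[
\sup_{x\in\bbR}\int e_k^*A_\lambda(x)\,W(\lambda)\,A_\lambda(x)^*e_k\,\lvert\lambda-z\rvert^{-2}\,\rmd\mu(\lambda)<\infty,\qquad z\in\bbC\setminus\bbR,
\]
obtained by applying Plancherel to the Green-function columns $f_k(y)=G(x,y;z)e_k$ (the factor $\lvert\lambda-z\rvert^{-2}$ is the Fourier-side resolvent and makes the $\lambda$-integral finite) and taking $\sup_x$ via the Sobolev embedding $D(\Lambda)\subset L^\infty$ together with the uniform boundedness principle. After that, multiplying by $(1+x^2)^{-\kappa}$, Fubini, and the Gronwall estimate $\|A_\lambda(x)\|\le C\int_x^{x+1}\|A_\lambda(y)\|\,\rmd y$ produce, for $\mu$-a.e.\ $\lambda$, a polynomially bounded eigensolution $A_\lambda(\cdot)v$ with $v$ in the range of $W(\lambda)$ --- not a bound on the full cocycle. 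So the skeleton of your (b) is right, but your stated input estimate must be replaced by the resolvent-weighted one before the Fubini step can be run.
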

\begin{proof}
(a)
Let $\lambda \notin \sigma(\Lambda)$ and let $\psi^{\pm}(x,\lambda)$ be the associated Weyl solutions at $\pm \infty$. By Lemma \ref{Dirac.Weyl.exp.decays.lemma}, the Weyl solutions obey 
$$\|\psi^{\pm}(x,\lambda)\| = O(\eop^{-\gamma |x|}) \text{ as }x \to \pm\infty,$$
for some $\gamma > 0$. Suppose for the purpose of establishing a contradiction that $\lambda \in S_\kappa$, and let $U(x,\lambda)$ denote an eigensolution satisfying $\|U(x,\lambda)\| = O(|x|^\kappa)$ as $x \to \pm\infty$. The growth estimates on $U$ and $\psi^\pm$ imply that the Wronskian satisfies $W[U,\psi^\pm](x) = O(\eop^{-\gamma |x|}|x|^\kappa)$ as $x \to \pm\infty$. Using conservation of the Wronskian, we have $W[U,\psi^\pm] = 0$, which implies that $\psi^+$ and $\psi^-$ are linearly dependent, which in turn implies $\lambda \in \sigma(\Lambda)$, a contradiction.
\medskip

(b) The eigenfunction expansion for $\Lambda$ is a unitary map $\mathcal{U}: L^2(\mathbb{R}, \mathbb{C}^2; dx) \to L^2(\mathbb{R}, \mathbb{C}^2; d\Omega)$, with $d\Omega(\lambda) = W(\lambda) \, d\mu(\lambda)$ a matrix-valued measure and $\mu$ a maximal spectral measure for $\Lambda$, defined on compactly supported functions by 
\begin{equation}
(\mathcal{U}f)(\lambda) = \int A_\lambda(x)^*  f(x) \, {\rmd}x
\end{equation}
and extended by continuity.

 For fixed $x \in\mathbb{R},$ $z \in \mathbb{C} \setminus \mathbb{R}$, and $e_1 := (1,0)^\top$, $e_2 := (0,1)^\top,$ the functions $f_k(y) = G(x,y; z)e_k$ are mapped to $(\mathcal{U}f_k)(\lambda) = \frac{1}{\lambda - z} A_\lambda(y)^* e_k$. Since $\mathcal{U}$ is unitary, 
	\begin{equation}
		\int \|G(x,y;z)e_k\|^2 \, {\rmd}x 
		= \int e_k^* A_\lambda(y) W(\lambda)A_\lambda(y)^*  e_k |\lambda - z|^{-2} \, {\rmd}\mu(\lambda). \label{squared.norms.identity}
	\end{equation}
	Now, since $D(\Lambda) \subseteq L^\infty(\mathbb{R}, \mathbb{C}^2)$, for any $f \in L^2(\mathbb{R}, \mathbb{C}^2)$, 
	\[
		\sup_{x \in \mathbb{R}} \left|\int e_k^* G(x,y; z)^* f(y) \, {\rmd}y \right| < \infty,
	\]
	so by the uniform boundedness principle, 
	\[
		\sup_{x \in \mathbb{R}} \int \|G(x,y;z)e_k\|^2 \, {\rmd}y < \infty.
	\]
	This combined with \eqref{squared.norms.identity} implies
	\[
		\sup_{x \in \mathbb{R}} \int e_k^*A_\lambda(y)W(\lambda)A_\lambda(y)^*e_k |\lambda - z|^{-2} \, {\rmd}\mu(\lambda) < \infty.
	\]
	Multiplying by the integrable function $(1 + x^2)^\kappa$ and integrating in $x$ yields
	\[
		\iint (1 + x^2)^\kappa e_k^*A_\lambda(y)W(\lambda)A_\lambda(y)^*e_k |\lambda - z|^{-2} \, {\rmd}\mu(\lambda) \,  {\rmd}x < \infty.
	\]
	Fubini's theorem then implies
	\begin{equation}
		\int (1 + x^2)^\kappa e_k^*A_\lambda(y)W(\lambda)A_\lambda(y)^*e_k \, {\rmd}x < \infty, \ \mu\text{-a.e.}\ \lambda, \ k = 1, 2. \label{mu.almost.everywhere.finiteness}
	\end{equation}
	Gronwall's inequality for $\Lambda_\varphi  A_z = z A_z$ gives an estimate
	\[
	\|A_\lambda(x)\| \leq C\int_{x}^{x+1} \|A_\lambda(y)\| \, {\rmd}y
	\]
	with an $x$-independent value of $C$. Thus, \eqref{mu.almost.everywhere.finiteness} implies $\|U(x,\lambda)\| = O(x^\kappa)$ for $\mu$-a.e. $\lambda$.
\medskip

(c)  $S_\kappa \subseteq \sigma(\Lambda)$ implies $\overline{S_\kappa} \subseteq \sigma(\Lambda)$; conversely, since $\mu$ is supported on $S_\kappa$, $\sigma(\Lambda) = \supp \mu \subseteq \overline{S_\kappa}$. 
\end{proof}

Let us briefly recall a tool that we will use later. The \emph{Hausdorff distance} between two nonempty closed subsets of $\bbR$ is given by
\begin{equation}
d_{\rm Hd}(K_1,K_2) = \inf \set{\varepsilon > 0 : K_1 \subseteq B_\varepsilon(K_2) \text{ and } K_2 \subseteq B_\varepsilon(K_1)},
\end{equation}
where $B_\varepsilon(S) = \{y : |x-y|<\varepsilon \text{ for some } x \in S\}$ denotes the open $\varepsilon$-neighborhood of the set $S$.
\begin{prop} \label{prop:dirac:HdSpecPert}
Given $\varphi_1,\varphi_2 \in L^\infty(\bbR,\bbC)$, one has
\begin{equation}
d_{\rm Hd}(\sigma(\Lambda_{\varphi_1}),\sigma(\Lambda_{\varphi_2}))
\leq \|\varphi_1 - \varphi_2\|_\infty.
\end{equation}
\end{prop}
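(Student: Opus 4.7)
The plan is to reduce this to the standard fact that the Hausdorff distance between the spectra of two bounded self-adjoint operators is at most the operator norm of their difference. Since $\Lambda_{\varphi_1}$ and $\Lambda_{\varphi_2}$ share the common domain $H^1(\bbR,\bbC^2)$ and differ by the bounded multiplication operator $\Phi_1-\Phi_2$, where $\Phi_i(x)=\begin{bmatrix}0&\varphi_i(x)\\\overline{\varphi_i(x)}&0\end{bmatrix}$, the first step is to identify $\|\Phi_1-\Phi_2\|_{L^2\to L^2}$ with $\|\varphi_1-\varphi_2\|_\infty$. This holds because $\Phi_1(x)-\Phi_2(x)$ is Hermitian with eigenvalues $\pm|\varphi_1(x)-\varphi_2(x)|$, hence has $\bbC^2$-operator norm exactly $|\varphi_1(x)-\varphi_2(x)|$; taking the essential supremum in $x$ gives the claimed operator norm on $L^2(\bbR,\bbC^2)$.

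Next, I would invoke Weyl's criterion for self-adjoint operators. Given $\lambda_1\in\sigma(\Lambda_{\varphi_1})$, there exists a normalized Weyl sequence $f_n\in H^1(\bbR,\bbC^2)$ with $\|(\Lambda_{\varphi_1}-\lambda_1)f_n\|\to 0$. Since $f_n\in H^1(\bbR,\bbC^2)$ is also in the domain of $\Lambda_{\varphi_2}$, the triangle inequality gives
\[
\|(\Lambda_{\varphi_2}-\lambda_1)f_n\|\le\|(\Lambda_{\varphi_1}-\lambda_1)f_n\|+\|(\Phi_1-\Phi_2)f_n\|\le\|(\Lambda_{\varphi_1}-\lambda_1)f_n\|+\|\varphi_1-\varphi_2\|_\infty.
\]
Passing to the limit $n\to\infty$ and applying Weyl's criterion for $\Lambda_{\varphi_2}$ shows $\mathrm{dist}(\lambda_1,\sigma(\Lambda_{\varphi_2}))\le\|\varphi_1-\varphi_2\|_\infty$, i.e.\ $\sigma(\Lambda_{\varphi_1})\subseteq B_{\|\varphi_1-\varphi_2\|_\infty+\epsilon}(\sigma(\Lambda_{\varphi_2}))$ for every $\epsilon>0$. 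Swapping the roles of $\varphi_1$ and $\varphi_2$ yields the reverse containment, and the definition of Hausdorff distance gives the bound.

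There is no real obstacle here: the only point that requires a tiny bit of care is the identification of the operator norm of the matrix-valued multiplication with the $L^\infty$ norm of $\varphi_1-\varphi_2$, which follows from the pointwise eigenvalue computation above. Everything else is a verbatim repetition of the standard Weyl-sequence argument, which goes through unchanged because $\Lambda_{\varphi_1}$ and $\Lambda_{\varphi_2}$ have a common dense domain on which their difference extends to a bounded operator of norm $\|\varphi_1-\varphi_2\|_\infty$.
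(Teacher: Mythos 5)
Your proof is correct and follows the same route as the paper: reduce to the general fact that the Hausdorff distance between spectra of self-adjoint operators is bounded by the operator norm of their difference, then compute $\|\Lambda_{\varphi_1}-\Lambda_{\varphi_2}\| = \|\varphi_1-\varphi_2\|_\infty$ pointwise. The paper simply cites general perturbation theory for the first step; your Weyl-sequence argument supplies the standard proof of that fact, so the two are in substance identical.
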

\begin{proof}
This follows from \[d_{\rm Hd}(\sigma(\Lambda_{\varphi_1}),\sigma(\Lambda_{\varphi_2})) 
\leq \|\Lambda_{\varphi_1} - \Lambda_{\varphi_2}\| 
= \|\varphi_1 - \varphi_2\|_\infty. \]
The  inequality is a consequence of general perturbation theory for self-adjoint operators and the equality follows from a direct calculation.
\end{proof}

\subsection{Floquet Theory}
Now that we have set up the tools that we need for general Dirac operators with uniformly locally $L^2$ operator data, we specialize to the case of periodic $\varphi$. We assume that $\varphi \in C(\bbR,\bbC)$ has period $T>0$. Recall that $A_z(x,\varphi)$ and $A_z(y,x,\varphi)$ are the transfer matrices defined in \eqref{eq:EV} and \eqref{eq:dirac:A_zyxDef} respectively. Recall also from \eqref{eq:dirac:A*jA} that whenever $\lambda \in\bbR$, one has 
\begin{equation}
A_\lambda(y,x,\varphi)\in \SU(1,1) : = \{A \in \SL(2,\bbC) : A^*jA=j\},
\end{equation}
where $j$ is as in \eqref{eq:dirac:variousJs}.

 If $\varphi$ is periodic of period $T$, we call 
 \begin{equation} \label{eq:dirac:monodromyDef}  M_z(x,\varphi) := A_z(x+T,x,\varphi)
 \end{equation} the \emph{monodromy matrix} and $D(z,\varphi) = \tr(M_z(x,\varphi))$ the \emph{discriminant}. We sometimes abbreviate $M_z(\varphi) = M_z(0,\varphi)$ and write $M_z(x)$ when $\varphi$ is clear from context. Let us briefly summarize the aspects of Floquet theory that are needed for our proofs.

\begin{theorem}[Floquet Theory] \label{t:dirac:floquet}
Suppose $\varphi \in C(\bbR)$ is $T$-periodic and let $M_z(x,\varphi)$ and $D(z,\varphi) = \tr(M_z(x, \varphi))$ denote the associated monodromy and discriminant.
\begin{enumerate}
\item[{\rm (a)}] $\spr(M_z(x,\varphi))$ and $\tr(M_z(x,\varphi))$ do not depend on $x$. In particular, $D(z,\varphi)$ is well-defined.
\item[{\rm (b)}] The Lyapunov exponent\footnote{The first equality of \eqref{eq:dirac:L(z)Def} is the definition of $L(z)$.} is given by
\begin{equation} \label{eq:dirac:L(z)Def}
L(z) 
:= \lim_{x\to\infty} \frac{1}{x} \log\| A_z(x,0,\varphi)\| 
=\frac{1}{T} \log \spr(M_z(x,\varphi)).\end{equation}
\item[{\rm(c)}] $z$ is a generalized eigenvalue of $\Lambda_\varphi$  if and only if $D(z,\varphi) \in [-2,2]$.
\item[{\rm(d)}] The spectrum of $\Lambda_\varphi$ is given by
\[\Sigma = \sigma(\Lambda_\varphi) = \{\lambda \in \bbR : D(\lambda) \in [-2,2]\} = \calZ.\]
\item[{\rm(e)}] If $D(\lambda) \in (-2,2)$, then for each $x$ there is $B_\lambda(x) \in \SU(1,1)$ such that
\begin{equation} \label{eq:dirac:monodromyConjDef}
 B_\lambda(x)M_\lambda(x,\varphi)B_\lambda(x)^{-1} 
 \in \mathrm{K}= \set{ \begin{bmatrix}\eop^{\iop \theta} & 0 \\ 0 & \eop^{-\iop \theta} \end{bmatrix} : \theta \in [0,2\pi]}.
\end{equation}
This conjugacy is unique modulo left-multiplication by elements of $\mathrm{K}$. That is, if $B_\lambda^{(1)}(s)$ also satisfies \eqref{eq:dirac:monodromyConjDef}, one has $B_\lambda(s) = Q B_\lambda^{(1)}$ for some $Q \in \mathrm{K}$.
 \item[{\rm(f)}] Given $R>0$ there are at most $2\left(\frac{T}{\pi}(R + \|\varphi\|_\infty)+1\right)$ bands of $\sigma(\Lambda_\varphi)$ that intersect $[-R,R]$.

\end{enumerate}
\end{theorem}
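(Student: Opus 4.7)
My plan is a Prüfer-type phase analysis based on the Möbius action of the $\SU(1,1)$-valued transfer matrices on $\partial\bbD$. Fix $w_0\in\partial\bbD$ and, for $\lambda\in\bbR$, set $w(x,\lambda) = A_\lambda(x,\varphi)\cdot w_0\in\partial\bbD$ (using that $A_\lambda\in\SU(1,1)$ preserves $\partial\bbD$), and write $w(x,\lambda) = \eop^{\iop\theta(x,\lambda)}$ via a continuous lift in both variables. A direct computation starting from $\Lambda_\varphi U = \lambda U$ applied to $w = u_1/u_2$ yields the Prüfer ODE
\[
\partial_x\theta(x,\lambda) = -2\lambda + 2\Re\bigl(\varphi(x)\eop^{-\iop\theta(x,\lambda)}\bigr).
\]

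Differentiating in $\lambda$, with $\partial_\lambda\theta(0,\lambda)=0$, produces a linear ODE for $\partial_\lambda\theta$ with constant forcing $-2$; variation of constants shows $\partial_\lambda\theta(x,\lambda)<0$ for $x>0$, so $\theta(T,\cdot)$ is strictly decreasing in $\lambda$. Integrating the Prüfer ODE over $[0,T]$ gives $|\theta(T,\lambda)+2\lambda T - \theta(0,\lambda)|\le 2T\|\varphi\|_\infty$, and specializing at $\lambda=\pm R$ yields
\[
0 < \theta(T,-R) - \theta(T,R) \le 4RT + 4T\|\varphi\|_\infty,
\]
an upper bound on the total variation of $\theta(T,\cdot)$ over $[-R,R]$.

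The core step is to show that each band $[a,b]\subseteq[-R,R]$ contributes at least $2\pi$ to this variation. By part~(e), for $\lambda\in(a,b)$ the monodromy is conjugate to $\begin{bmatrix}\eop^{\iop\alpha(\lambda)} & 0 \\ 0 & \eop^{-\iop\alpha(\lambda)}\end{bmatrix}$ with $\alpha(\lambda)\in(0,\pi)$, while $D(a),D(b)\in\{\pm 2\}$ forces $M_a,M_b$ to be parabolic or $\pm I$. The standard (Floquet) monotonicity of $D$ on a band shows $\alpha(\lambda)$ traverses $[0,\pi]$ monotonically as $\lambda$ crosses $[a,b]$; since such diagonal elements act on $\partial\bbD$ as rotation by $2\alpha$, the continuous lift of $\arg(M_\lambda\cdot w_0)$ decreases by $2\pi$ across the band.

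Combining, the number of bands fully contained in $[-R,R]$ is at most $(4RT+4T\|\varphi\|_\infty)/(2\pi) = 2T(R+\|\varphi\|_\infty)/\pi$, and adding the at most two bands straddling $\pm R$ produces the claimed bound $2\bigl(T(R+\|\varphi\|_\infty)/\pi+1\bigr)$. The main obstacle is the per-band $2\pi$ lower bound: one must reconcile the continuous lift in $\lambda$ of $\arg(M_\lambda\cdot w_0)$ with the conjugating family $B_\lambda$ of part~(e) (determined only modulo $\mathrm{K}$), which reduces to a winding-number computation using that the Möbius action of $\SU(1,1)$ on $\partial\bbD$ is a double cover of the rotation on the hyperbolic plane, so a $\pi$-change in $\alpha$ corresponds to a full $2\pi$ rotation in $\theta$.
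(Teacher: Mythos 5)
Your proposal addresses only part (f); parts (a)--(e) are not treated, though the paper's proofs of those are brief ("standard fare") and you lean on (e) and on monotonicity of $D$ on bands as black boxes, so I'll focus on (f).

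Your route to (f) is genuinely different from the paper's. The paper compares $\Lambda_\varphi$ to the free Dirac operator $\Lambda_0$: the free discriminant is $2\cos(Tz)$ with explicit bands $[n\pi/T,(n+1)\pi/T]$, and since $\|\Lambda_\varphi-\Lambda_0\|=\|\varphi\|_\infty$, standard eigenvalue perturbation of the periodic/antiperiodic boundary-value problems immediately bounds how many perturbed bands can reach into $[-R,R]$. This is short and self-contained. Your route is an oscillation-theory / rotation-number argument via the Prüfer phase. Your preliminary computations are correct: the Prüfer ODE $\partial_x\theta=-2\lambda+2\Re(\varphi\eop^{-\iop\theta})$ is right, the variation-of-constants monotonicity in $\lambda$ is right, and the $L^\infty$ bound $|\theta(T,\lambda)-\theta(0,\lambda)+2\lambda T|\le 2T\|\varphi\|_\infty$ is right, so the upper bound $4T(R+\|\varphi\|_\infty)$ on the total decrease of $\theta(T,\cdot)$ over $[-R,R]$ holds.

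However, the core step --- that $\theta(T,\cdot)$ decreases by at least $2\pi$ across each band --- is a genuine gap, and you flag it yourself. The difficulty is not cosmetic. You want to read off the change of $\arg(\hat M_\lambda(w_0))$ from the elliptic rotation angle $2\alpha(\lambda)$ supplied by part (e), but the conjugator $B_\lambda$ (chosen modulo $\mathrm K$) blows up at band edges because the fixed point of $\hat M_\lambda$ in $\bbD$ escapes to $\partial\bbD$; there is no a priori continuous bounded choice of $B_\lambda$ on a closed band, and without controlling the winding contributed by the $B_\lambda$-family you cannot transfer the $\pi$-increase of $\alpha$ to a $2\pi$-decrease of $\theta(T,\cdot)$. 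The statement is true, but proving it amounts to developing the rotation-number/IDS machinery for $\SU(1,1)$-valued Dirac cocycles (e.g., showing that the rotation number, not the one-period Prüfer increment per se, increases by exactly $\pi/T$ per band and relating it back to $\theta(T,\cdot)$), which is at least as much work as the theorem itself. You also implicitly invoke monotonicity of $D$ on a band, which in the Dirac setting is again a nontrivial fact requiring proof. So while your plan is a recognizable classical strategy, as written it does not constitute a proof of (f), whereas the paper's comparison with the free operator closes the argument in a few lines.
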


\begin{proof} For the most part, these results are standard fare \cite{LevitanSargsjan}; we supply arguments to keep the paper more self-contained for the reader's convenience.

(a) This follows from periodicity and cyclicity of the trace.
\smallskip

(b) This follows from periodicity, Gelfand's formula, and interpolation.
\smallskip

(c) If $D(z) \in [-2,2]$, $M_z(\varphi)$ has a unimodular eigenvalue, so $\Lambda_\varphi U = zU$ enjoys a bounded solution, and thus $z$ is a generalized eigenvalue. If $D(z) \in \bbC \setminus [-2,2]$, then $M_z(\varphi)$ has eigenvalues $\lambda_\pm$ with $|\lambda_+|>1>|\lambda_-|$, so every solution of $\Lambda_\varphi U = zU $ grows exponentially on at least one half-line, and thus $z$ is not a generalized eigenvalue.
\smallskip

(d) Since $D(z)$ is analytic (in particular, continuous), the set of $z$ with $D(z) \notin [-2,2]$ is open, so this follows from (b), (c), and Theorem~\ref{t:schnolForDirac}.
\smallskip

(e) This a standard fact about $\SU(1,1)$ matrices. We will want to use some notation and ideas from the proof later, so we recall the argument here. Recall that any invertible $M \in \bbC^{2\times 2}$ acts on $\hat{\bbC}=\bbC \cup\{\infty\}$ by M\"obius transformations:
\[ \hat{M}z = \frac{m_{11}z+m_{12}}{m_{21}z+m_{22}}, \quad z \in \hat{\bbC}. \]
The reader can check that if $M \in \SU(1,1)$, then $\tr(M) \in (-2,2)$ if and only if $\hat{M}$ has a unique fixed point in $\bbD$. Furthermore, if $M \in \SU(1,1)$, then $\hat{M}0=0$ if and only if $M \in \mathrm{K}$, so the desired conjugacy is obtained by choosing $B \in \SU(1,1)$ with $\hat{B} \xi = 0$, where $\xi \in \bbD$ is the unique fixed point of $\hat{M}$ in the unit disk. The uniqueness statement follows since this discussion implies that $B(B')^{-1} \in \mathrm{K}$ if $B'$ also conjugates $M$ to a rotation.  Since $ s_+(x,\lambda + \iop  0)$ is that fixed point, the reader may note that 
\begin{equation} \label{eq:dirac:BEexplicitChoice}
B_\lambda
= \frac{1}{\sqrt{1-|s_+(x,\lambda + \iop  0)|^2}} \begin{bmatrix} 1 & -s_+(x,\lambda + \iop  0) \\[2mm] - \overline{s_+(x,\lambda + \iop  0)} & 1
\end{bmatrix}
\end{equation}
belongs to $\SU(1,1)$ and satisfies \eqref{eq:dirac:monodromyConjDef}.

\smallskip

(f) Consider $\Lambda_0$, the free Dirac operator (i.e., the Dirac operator with $\varphi\equiv 0$), and view it as a $T$-periodic operator. The associated monodromy is
\begin{equation}
M_z(x,0) = \begin{bmatrix} \eop^{-\iop  z T} & 0 \\ 0 & \eop^{\iop  z T} \end{bmatrix}.
\end{equation}
Thus, the associated discriminant is just $D(z) = 2\cos(Tz)$ and hence the bands are given by
\begin{equation} \label{eq:dirac:FREEBANDS} B_n = \left[ \frac{n \pi}{T}, \frac{(n+1)\pi}{T}  \right], \quad n \in \bbZ. \end{equation}
Choose $n \in \bbZ_+$ maximal with $n\pi/T \leq R + \|\varphi\|_\infty$. Combining \eqref{eq:dirac:FREEBANDS} with standard eigenvalue perturbation theory, at most $2(n+1)$ bands of $\Lambda_\varphi$ may then intersect $[-R,R]$. Since $n\pi/T \leq R+ \|\varphi\|_\infty$, the number of bands intersecting $[-R,R]$ is bounded from above by
\[
2(n+1) \leq 2\left(\frac{T}{\pi}(R + \|\varphi\|_\infty)+1\right),
\]
as desired.
\end{proof}

\subsection{A Helpful Formula for the Johnson-Moser Function}

Using the Dirac equation, we can express the logarithmic derivative of the dominant component of the Weyl solution,
\begin{align}\label{logderivativepsiplus}
\frac{\partial_x \psi^+_2(x,z)}{ \psi^+_2(x,z) }
& = \iop z - \iop  \overline{\varphi(x)} s_+(x,z), \\
- \frac{\partial_x \psi^-_1(x,z)}{ \psi^-_1(x,z) } 
& = \iop z - \iop  \varphi(x) s_-(x,z).
\end{align}
This motivates the definition of the Johnson--Moser function of a $T$-periodic Dirac operator,
\begin{equation} \label{eq:floquet:JMfctDef}
w(z) = \frac 1T \int_0^T  (  \iop z - \iop  \overline{\varphi(x)} s_+(x,z) ) \, {\rmd}x
\end{equation}
for $z \in \bbC_+$. Let us briefly note how $w(z)$ is related to objects already introduced.
\begin{prop} Suppose $\varphi \in C(\bbR)$ is $T$-periodic. For all $z \in \bbC_+$,
\begin{align} \label{eq:floquet:DztoWz}
D(z) & = 2 \cosh( Tw(z)) \\
\label{eq:floquet:LztoWz}
L(z) & = -\Re w(z).
\end{align}
\end{prop}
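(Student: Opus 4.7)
My plan is to identify $w(z)$ directly with (the logarithm of) a Floquet multiplier of the monodromy $M_z(\varphi)$ and read off both formulas from there. The starting point is \eqref{logderivativepsiplus}, which identifies the integrand of \eqref{eq:floquet:JMfctDef} with the logarithmic derivative $\partial_x \psi_2^+(x,z)/\psi_2^+(x,z)$ of the lower component of the Weyl solution; note that $\psi_2^+$ is pointwise nonzero since $s_+ = \psi_1^+/\psi_2^+$ takes values in $\bbD$. Integrating from $0$ to $T$, comparing with \eqref{eq:floquet:JMfctDef}, and then exponentiating (so that any branch ambiguity in the logarithm is erased) yields
\begin{equation*}
\frac{\psi_2^+(T,z)}{\psi_2^+(0,z)} = \eop^{T w(z)}.
\end{equation*}

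Next I would identify this ratio as a Floquet multiplier. Because $\varphi$ is $T$-periodic, $x\mapsto \psi^+(x+T,z)$ is again an eigensolution at $z$ that is $L^2$ near $+\infty$; by uniqueness of the Weyl solution (limit-point at $+\infty$), there exists $c(z)\in\bbC^*$ with $\psi^+(x+T,z)=c(z)\psi^+(x,z)$. Taking the second component at $x=0$ gives $c(z)=\eop^{T w(z)}$, while evaluating the vector identity at $x=0$ gives $M_z(\varphi)\psi^+(0,z)=\psi^+(T,z)=c(z)\psi^+(0,z)$, so $\eop^{T w(z)}$ is an eigenvalue of $M_z(\varphi)$. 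Since $\det M_z(\varphi)=1$, the second eigenvalue is $\eop^{-T w(z)}$, and summing them produces
\begin{equation*}
D(z)=\tr M_z(\varphi)=\eop^{T w(z)}+\eop^{-T w(z)} = 2\cosh(T w(z)),
\end{equation*}
which is \eqref{eq:floquet:DztoWz}.

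For \eqref{eq:floquet:LztoWz} I would invoke Theorem~\ref{t:dirac:floquet}(b): $L(z)=T^{-1}\log \spr(M_z(\varphi))$. The eigenvalues $\eop^{\pm T w(z)}$ have moduli $\eop^{\pm T\Re w(z)}$, hence $\spr(M_z(\varphi))=\eop^{T\lvert \Re w(z)\rvert}$ and $L(z)=\lvert \Re w(z)\rvert$. To fix the sign, I use that for $z\in\bbC_+$ we have $z\notin\bbR\supseteq \sigma_{\rm ess}(\Lambda_\varphi)$, so Lemma~\ref{Dirac.Weyl.exp.decays.lemma} guarantees exponential decay $\|\psi^+(x,z)\|=O(\eop^{-\gamma x})$ at $+\infty$; iterating $\psi^+(x+T,z)=c(z)\psi^+(x,z)$ then forces $\lvert c(z)\rvert<1$, i.e., $\Re w(z)<0$, and therefore $L(z)=-\Re w(z)$.

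The only delicate step is the branch of logarithm when integrating the logarithmic derivative over one period: a continuous choice of $\log \psi_2^+(x,z)$ on $[0,T]$ is only defined modulo $2\pi\iop\bbZ$, so the equality $Tw(z)=\log(\psi_2^+(T,z)/\psi_2^+(0,z))$ is only modulo $2\pi\iop\bbZ$. This ambiguity is entirely harmless in what follows because $Tw(z)$ enters only through $\eop^{\pm Tw(z)}$, both for the trace computation giving $D(z)$ and for the spectral-radius computation giving $L(z)$.
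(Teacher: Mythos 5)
Your proof is correct and follows essentially the same route as the paper's: both derive $\psi_2^+(T,z)/\psi_2^+(0,z)=\eop^{Tw(z)}$ by integrating \eqref{logderivativepsiplus}, identify this as a Floquet multiplier, use $\det M_z=1$ to get the pair $\eop^{\pm Tw(z)}$, and then fix the sign in \eqref{eq:floquet:LztoWz} via the decay of the Weyl solution at $+\infty$. The only (cosmetic) difference is that the paper identifies the eigenvector as $(s_+(0,z),1)^\top$ using $s_+(T,z)=s_+(0,z)$, while you invoke uniqueness of the Weyl solution to get $\psi^+(\cdot+T,z)=c(z)\psi^+(\cdot,z)$ — these are the same observation — and you add an explicit (and sound) remark about the $2\pi\iop\bbZ$ ambiguity in the logarithm, which the paper leaves implicit.
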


\begin{proof}
Note that $s_+(T,z) = s_+(0,z)$ by periodicity, so the vector $(s_+(0,z), 1)^\top$ is an eigenvector of the monodromy matrix with eigenvalue $\psi_2^+(T,z) / \psi_2^+(0,z)$. By integrating \eqref{logderivativepsiplus} it follows that the corresponding eigenvalue is equal to $\eop^{T w(z)}$. Due to $\det M_z(0,\varphi) = 1$, the other eigenvalue is $\eop^{-Tw(z)}$ and thus 
$$D(z) = \eop^{Tw(z)} + \eop^{-Tw(z)} = 2\cosh(Tw(z)),$$
proving \eqref{eq:floquet:DztoWz}.
\medskip

The previous argument shows $\eop^{Tw(z)} = \psi_2^+(T,z)/\psi_2^+(0,z)$ is one of the eigenvalues of the monodromy matrix. Since $\psi^+$ is the Weyl solution at $+\infty$, it follows that that $\eop^{-Tw(z)}$ is the dominant eigenvalue of $M_z(0,\varphi)$, so \eqref{eq:dirac:L(z)Def} implies
\[
- \Re w(z) = \frac 1T \log \left\lvert \frac{ \psi^+_2(0,z) }{ \psi^+_2(T,z) } \right\rvert = L(z),
\]
as promised.
 \end{proof} 
 
  Since the Lyapunov exponent defines a non-negative, symmetric, subharmonic function on $\bbC$ that is harmonic and positive on $\bbC\setminus\bbR$, it follows by complexifying \cite[Lemma 3.2]{EGL} that $w(z)$ is related to the density of states measure $\rho$ by
\[
-w(z) 
=c_0z + c_1+\int_{-1}^1  \log (\lambda-z) \, {\rmd}\rho(\lambda)+\int_{\bbR\setminus(-1,1)} \left( \log \left(1-\frac{z}{\lambda}\right)+\frac{z}{\lambda}\right) \, {\rmd}\rho(\lambda),
\]
where $c_0,c_1$ are some constants. Differentiating and passing from a normalization at $0$ to a normalization at $\iop $, we find $c_2$ so that 
\begin{equation}\label{derivativeofJMfunctionfromrho}
w'(z)=c_2+\int_{\bbR}\left(\frac{1}{\lambda-z}-\frac{\lambda}{1+\lambda^2}\right) \, {\rmd}\rho(\lambda),\quad c_2= \Re w'(\iop ).
\end{equation}
On the other hand, we will express it in terms of the full-line Green function. As a preliminary, we need:

\begin{lemma}
Let $z, z_0 \in \bbC \setminus \bbR$. 
\begin{enumerate}[label={\rm(\alph*)}]
\item Whenever $z \neq z_0$,
\begin{equation}\label{WeylWronskian}
\psi^+(x,z_0)^\top (- \iop J) \psi^+(x,z)  
= -  \iop  (z - z_0) \int_{x}^{+\infty} \psi^+(y,z_0)^\top \cJ \psi^+(y,z) \, {\rmd}y.
\end{equation}
\item If Weyl solutions are normalized by $\psi_2^+(x,z) = 1$, then
\begin{equation}\label{WeylWronskian2}
1 - \overline{s_+(x,z_0)} s_+(x,z)  = - \iop  (z - \overline z_0) \int_{x}^{+\infty} \psi^+(y,z_0)^* \psi^+(y,z) \, {\rmd}y.
\end{equation}
\item Weyl solutions are $L^2([x,\infty),\bbC^2)$-continuous on $\bbC_+$, that is, if we assume that Weyl solutions are normalized by $\psi_2^+(x,z) = 1$, then
\begin{equation}
\lim_{z \to z_0} \int_x^{+\infty} \lVert \psi^+(y,z) - \psi^+(y,z_0) \rVert^2 \, {\rmd}y = 0.
\end{equation}
\item For all $z\in \bbC_+$,
\begin{equation}\label{Weylderivative}
 \partial_z s_+(x,z)  =  \iop   \frac 1{ ( \psi^+_2(x,z) )^2} \int_{x}^{+\infty} \psi^+(y, z)^\top \cJ \psi^+(y,z) \, {\rmd}y.
\end{equation}
\end{enumerate}
\end{lemma}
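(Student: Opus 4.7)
The plan is to prove parts (a)--(d) in sequence, with (a) and (b) being Green's-identity calculations, (c) a uniform-integrability consequence of (b), and (d) following from (a) via a limiting argument justified by (c). The enabling fact throughout is that for $z, z_0 \in \bbC \setminus \bbR$, the Weyl solution $\psi^+(\cdot, z)$ decays exponentially at $+\infty$ by Lemma~\ref{Dirac.Weyl.exp.decays.lemma}, so boundary contributions at $+\infty$ vanish when integrating.

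For part (a), apply \eqref{eq:dirac:wronskConserve} to $W[\psi^+(\cdot,z_0),\psi^+(\cdot,z)](x) = \psi^+(x,z_0)^\top J \psi^+(x,z)$ and use that the two functions are eigensolutions at $z_0$ and $z$ respectively to obtain
\[
\frac{d}{dx}W[\psi^+(\cdot,z_0),\psi^+(\cdot,z)](x) = (z_0 - z)\,\psi^+(x,z_0)^\top \cJ \psi^+(x,z).
\]
Integrating from $x$ to $+\infty$, the Wronskian at $+\infty$ vanishes, and multiplying by $-\iop$ yields (a). For part (b), the analogous computation with the sesquilinear pairing $\psi^+(x,z_0)^* j \psi^+(x,z)$ (using Hermiticity of $\Phi$) gives
\[
\frac{d}{dx}\bigl(\psi^+(x,z_0)^* j \psi^+(x,z)\bigr) = \iop (z - \overline{z_0})\,\psi^+(x,z_0)^* \psi^+(x,z),
\]
and integration plus the normalization identity $\psi^+(x,z_0)^* j \psi^+(x,z) = 1 - \overline{s_+(x,z_0)} s_+(x,z)$ (valid under $\psi_2^+ \equiv 1$) yields (b).

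For part (c), specializing (b) to $z = z_0$ yields
\[
\int_x^\infty \|\psi^+(y,z)\|^2\,dy = \frac{1 - |s_+(x,z)|^2}{2\,\Im z},
\]
which is continuous in $z$ on $\bbC_+$ since $s_+(x,\cdot)$ is analytic there. Expanding $\|\psi^+(y,z) - \psi^+(y,z_0)\|^2 = \|\psi^+(y,z)\|^2 + \|\psi^+(y,z_0)\|^2 - 2\Re\bigl(\psi^+(y,z_0)^*\psi^+(y,z)\bigr)$ and integrating term by term, (b) supplies a closed formula for the cross integral that converges to $\int_x^\infty \|\psi^+(y,z_0)\|^2\,dy$ as $z \to z_0$, so the three terms combine to zero in the limit.

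For part (d), under the normalization $\psi_2^+ \equiv 1$, the LHS of (a) simplifies to $s_+(x,z_0) - s_+(x,z)$. Dividing by $z_0 - z$ and letting $z_0 \to z$, the LHS tends to $\partial_z s_+(x,z)$ by analyticity, and the RHS tends to $\iop \int_x^\infty \psi^+(y,z)^\top \cJ \psi^+(y,z)\,dy$ by Cauchy--Schwarz combined with the $L^2$-continuity from (c). The factor $1/(\psi_2^+(x,z))^2$ in the general formula arises because rescaling $\psi^+ \to c\psi^+$ multiplies the integrand by $c^2$ while leaving $s_+$ invariant. The main obstacle is justifying the passage to the limit inside the integral in (d), which is precisely what (c) is designed to supply; the algebraic point is that (b) produces explicit formulas for every term arising in $\|\psi^+(\cdot,z) - \psi^+(\cdot,z_0)\|_{L^2([x,\infty),\bbC^2)}^2$, so convergence to zero can be verified by direct computation rather than by an abstract compactness argument.
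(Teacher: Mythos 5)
Your proof is correct, and for parts (a) and (b) you take a genuinely shorter route than the paper. The paper derives (a) from the sesquilinear transfer-matrix identity \eqref{eq:dirac:A*jAderiv} -- multiplying by $\psi^+(0,z_0)^*$ on the left and $\psi^+(0,z)$ on the right -- and then converts the resulting antiholomorphic dependence on $z_0$ into holomorphic dependence by invoking the reflection symmetry $\psi^+(x,\overline{z_0}) = \cJ\,\overline{\psi^+(x,z_0)}$; it then obtains (b) by running that same symmetry backwards on (a). You instead apply the Wronskian-derivative identity \eqref{eq:dirac:wronskConserve} directly to eigensolutions at two different energies $z_0,z$, which gives the bianalytic identity (a) in one step, and you obtain (b) by the parallel sesquilinear Green identity for $\psi^+(x,z_0)^* j\,\psi^+(x,z)$ using $\Phi^*=\Phi$; both boundary terms at $+\infty$ are killed by Lemma~\ref{Dirac.Weyl.exp.decays.lemma}. (Sanity check that the sign and the factor of $-\iop$ in \eqref{WeylWronskian} come out right: your $W'[\psi^+(\cdot,z_0),\psi^+(\cdot,z)](x)=(z_0-z)\,\psi^+(x,z_0)^\top\cJ\psi^+(x,z)$ integrated against the vanishing Wronskian at $+\infty$ gives $-W[\cdot,\cdot](x)=(z_0-z)\int_x^\infty$, and multiplying by $\iop$ yields $-\iop\,W=-\iop(z-z_0)\int_x^\infty$, which is exactly \eqref{WeylWronskian}.) Your argument buys directness and avoids invoking the reflection symmetry, at the cost of not making that symmetry explicit (the paper wants it on record anyway for later use). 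Parts (c) and (d) coincide with the paper's argument; you are actually slightly more careful than the paper in (d) by explicitly invoking (c) together with Cauchy--Schwarz to justify passing the limit $z_0\to z$ under the integral, a point the paper leaves implicit.
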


\begin{proof}
(a) Recall that $A_z(x,\varphi)$ denotes the transfer matrix from $0$ to $x$; since $\varphi$ is fixed in this argument, we suppress it from the notation. Starting with the formula \eqref{eq:dirac:A*jAderiv}, we get
\[
A_{z_0}(x_1)^* j A_z(x_1) - A_{z_0}(x_2)^* j A_z(x_2) 
= - \iop  (z - \overline z_0) \int_{x_1}^{x_2} A_{z_0}(y)^* A_z(y) \, {\rmd}y.
\]
Multiplying by $\psi^+(0,z_0)^*$ on the left and by $\psi^+(0,z)$ on the right gives
\[
\psi^+(x_1,z_0)^* j \psi^+(x_1,z) - \psi^+(x_2,z_0)^* j \psi^+(x_2,z) 
= - \iop  (z - \overline z_0) \int_{x_1}^{x_2} \psi^+(y,z_0)^* \psi^+(y,z) \, {\rmd}y.
\]
To obtain an expression analytic in both $z$ and $z_0$, we note that by a direct verification \cite[Lemma 2.4]{EGL},  $\cJ \overline{ \psi^+(x,z_0) }$ is an eigensolution at energy $\overline{z_0}$ (with complex conjugation applied componentwise). It is also square-integrable on $[0,\infty)$ so it is a Weyl solution at energy  $\overline{z_0}$. Inserting
\begin{equation} \label{eq:diracgen:conjSolution}
\psi^+(x,\overline{z_0}) = \cJ \overline{ \psi^+(x,z_0) }
\end{equation}
we obtain
\[
\psi^+(x_1,\overline z_0)^\top \cJ  j \psi^+(x_1,z)  - \psi^+(x_2,\overline z_0)^\top \cJ  j \psi^+(x_2,z)  
= - \iop  (z - \overline z_0) \int_{x_1}^{x_2} \psi^+(y,\overline{z_0})^\top \cJ \psi^+(y,z) \, {\rmd}y.
\]
Replacing $\overline{z_0}$ by $z_0$ and using $\cJ j = -\iop  J$, we finally get a bianalytic expression
\[
\psi^+(x_1, z_0)^\top (-\iop J) \psi^+(x_1,z) - \psi^+(x_2, z_0)^\top (-\iop J) \psi^+(x_2,z)  
= - \iop  (z -  z_0) \int_{x_1}^{x_2} \psi^+(y,{z_0})^\top \cJ \psi^+(y,z) \, {\rmd}y.
\]
Due to the limit-point condition, the Wronskian $ \psi^+(x_2, z_0)^\top J \psi^+(x_2,z)$ decays as $x_2 \to \infty$, so taking $x_2 \to \infty$ and writing $x_1 = x$ gives \eqref{WeylWronskian}.
\medskip

(b)  By using the reflection symmetry \eqref{eq:diracgen:conjSolution} again on (a) we obtain
\[
\psi^+(x, z_0)^* j  \psi^+(x,z)    
= - \iop  (z -  \overline z_0) \int_{x}^{\infty} \psi^+(y,{z_0})^*  \psi^+(y,z) \, {\rmd}y.
\]
so in particular if $\psi^+_2(x,z) = \psi^+_2(x,z_0) = 1$ we obtain \eqref{WeylWronskian2}.
\medskip

(c) For any $z_0 \neq z$ we can expand the integral
\[
\int_x^{+\infty} \lVert \psi^+(y,z) - \psi^+(y,z_0) \rVert^2 \, {\rmd}y
\]
into four terms and apply part (b) to each term to obtain
\begin{align*}
\int_x^{+\infty} \lVert \psi^+(y,z) - \psi^+(y,z_0) \rVert^2 \, {\rmd}y  
& =  \frac{ 1 - \overline{s_+(x,z)} s_+(x,z) }{ - \iop (z - \overline z) }
- \frac{ 1 - \overline{s_+(x,z_0)} s_+(x,z) }{ - \iop (z - \overline z_0) } \\
& \quad  -  \frac{ 1 - \overline{s_+(x,z)} s_+(x,z_0) }{ - \iop (z_0 - \overline z) }
+ \frac{ 1 - \overline{s_+(x,z_0)} s_+(x,z_0) }{ - \iop (z_0 - \overline z_0) }
\end{align*}
and taking $z_0 \to z$, the right-hand side converges to $0$.
\medskip

(d) 
Dividing (a) by $\psi^+_2(x,z_0) \psi^+_2(x,z)$, we obtain
\[
s_+(x,z_0) - s_+(x,z) = -  \frac { \iop  (z -  z_0) }{\psi^+_2(x,z_0) \psi^+_2(x,z) } \int_{x}^{+\infty} \psi^+(y,{z_0})^\top \cJ \psi^+(y,z) \, {\rmd}y
\]
Dividing by $z_0 - z$ and taking the limit $z_0 \to z$ gives \eqref{Weylderivative}.
\end{proof}

We can now represent $\partial_z w(z)$ as an average of the trace of the matrix-valued diagonal Green functions: 

\begin{lemma} \label{lem:dzwDiagGreen} For a $T$-periodic Dirac operator, the derivative of the Johnson-Moser function is 
\begin{equation}\label{derivativeofJMfunction}
w'(z) =  \frac 1T \int_0^T  \frac{ \psi^-(x,z)^\top \cJ \psi^+(x,z) }{ W[\psi^+, \psi^-]}\, {\rmd}x, \qquad z\in \bbC_+.
\end{equation}
\end{lemma}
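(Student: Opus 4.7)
The plan is to read off $w(z)$ from the Floquet eigenvalue of the monodromy matrix $M_z := M_z(0,\varphi) = A_z(T,0,\varphi)$ and differentiate. The identification $\eop^{Tw(z)} = \psi^+_2(T,z)/\psi^+_2(0,z)$ (used already in the proof of \eqref{eq:floquet:DztoWz}) shows that $\psi^+(0,z)$ is a right eigenvector of $M_z$ at the eigenvalue $\eop^{Tw(z)}$ and, since $\det M_z = 1$, that $\psi^-(0,z)$ is a right eigenvector at the reciprocal eigenvalue $\eop^{-Tw(z)}$. From the $\SL(2,\bbC)$ identity $M^\top = J M^{-1} J^{-1}$ one then finds that $(J\psi^-(0,z))^\top$ is a left eigenvector of $M_z$ at the eigenvalue $\eop^{Tw(z)}$, and a brief computation shows $(J\psi^-(0,z))^\top \psi^+(0,z) = W[\psi^+,\psi^-]$.

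Next, I would differentiate $M_z \psi^+(0,z) = \eop^{Tw(z)}\psi^+(0,z)$ in $z$ and pair the result with $(J\psi^-(0,z))^\top$ on the left. This annihilates the $(M_z - \eop^{Tw})\partial_z \psi^+$ term, leaving
\[
Tw'(z)\,\eop^{Tw(z)} W[\psi^+, \psi^-] = (J\psi^-(0,z))^\top (\partial_z M_z) \psi^+(0,z).
\]
Variation of parameters applied to the transfer-matrix equation $\partial_x A_z(x) = \iop j(z - \Phi(x)) A_z(x)$ gives
\[
\partial_z M_z = M_z \int_0^T A_z(y)^{-1} (\iop j) A_z(y) \, \rmd y.
\]
Combining $A_z(y)\psi^+(0,z) = \psi^+(y,z)$ with the $\SL(2,\bbC)$ identity $(A^{-1})^\top = J A J^{-1}$ gives $(J\psi^-(0,z))^\top A_z(y)^{-1} = (J\psi^-(y,z))^\top$, while the algebraic identity $(J\psi^-)^\top j \psi^+ = -\iop (\psi^-)^\top \cJ \psi^+$ (obtained from $Jj = \iop \cJ$ and $J^\top = -J$) collapses the integrand to $-\iop \psi^-(y,z)^\top \cJ \psi^+(y,z)$. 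After the cancellation of $\eop^{Tw(z)}$ on both sides and combination of the two factors of $\iop$, this yields
\[
Tw'(z) W[\psi^+, \psi^-] = \int_0^T \psi^-(y,z)^\top \cJ \psi^+(y,z) \, \rmd y,
\]
which is equivalent to \eqref{derivativeofJMfunction} because $W[\psi^+,\psi^-]$ is $y$-independent.

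The main obstacle I anticipate is not conceptual but computational: careful bookkeeping of the three closely related matrices $j$, $J$, $\cJ$ under transposition, conjugation, and the $\SL(2,\bbC)$-structure, since $J$ is antisymmetric while $\cJ$ is symmetric and signs must be tracked through several rearrangements. Beyond that the argument is purely algebraic; $z \in \bbC_+$ keeps us off the spectrum and all Weyl quantities are holomorphic, so no analytic subtleties arise.
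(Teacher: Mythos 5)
Your proof is correct, and it takes a genuinely different route from the paper's. The paper differentiates the definition \eqref{eq:floquet:JMfctDef} under the integral sign, invokes the Weyl-derivative formula \eqref{Weylderivative} from the preceding lemma, and then performs an integration by parts with an auxiliary $T$-periodic function $h(x,z) = s_-(x,z)\partial_z s_+(x,z)/(1 - s_+ s_-)$ whose $x$-derivative happens to cancel the unwanted term. Your argument instead starts from the Floquet identity $\eop^{Tw(z)} = \psi_2^+(T,z)/\psi_2^+(0,z)$ already established in the proof of \eqref{eq:floquet:DztoWz}, identifies $\psi^\pm(0,z)$ as the right eigenvectors of $M_z$ at $\eop^{\pm Tw(z)}$, and applies the Feynman--Hellmann device: pairing the differentiated eigenvector equation with the left eigenvector $(J\psi^-(0,z))^\top$ kills $\partial_z\psi^+$, and the Duhamel formula $\partial_z M_z = M_z \int_0^T A_z(y)^{-1}(\iop j)A_z(y)\,{\rmd}y$ converts the remaining trace-like quantity into an integral over the period. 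This has the advantage of bypassing \eqref{Weylderivative} entirely and of replacing the ad hoc choice of $h$ with a standard perturbation-theory identity; the price is more matrix bookkeeping through the $j/J/\cJ$ relations and the $\SL(2,\bbC)$ symplectic identities $A^\top J A = J$, $J A^{-1} = A^\top J$, all of which you invoke correctly. One small point to make explicit for a full write-up: that $\psi^-(0,z)$ is an eigenvector of $M_z$ is not a consequence of $\det M_z = 1$ alone, but follows because $\psi^-(\cdot+T,z)$ is again the (up to scalar unique) Weyl solution at $-\infty$, and the eigenvalue must be $\eop^{-Tw(z)}$ since $\psi^\pm$ are linearly independent off the real axis.
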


\begin{proof}
By a Cauchy formula and Fubini's theorem, it is allowed to differentiate the definition of the Johnson--Moser function inside the integral to obtain
\[
w'(z) 
= \frac 1T \int_0^T  (  \iop   - \iop  \overline{\varphi(x)} \partial_z s_+(x,z) ) \, {\rmd}x.
\]
To rewrite this, we essentially use an integration by parts. We start from an inspired guess
\[
h (x,z) 
= \frac{ s_-(x,z) \partial_z s_+(x,z)   }{ 1 - s_+(x,z) s_-(x,z) } 
= \frac{  \iop   \frac{ \psi_2^-(x,z) }{ \psi_1^-(x,z) }  \frac 1{ ( \psi^+_2(x,z) )^2} \int_{x}^{+\infty} \psi^+(y, z)^\top  \cJ  \psi^+(y,z) \, {\rmd}y }{\iop  \frac 1{ \psi_1^-(x,z) \psi_2^+(x,z) } \psi^+(x,z)^\top J \psi^-(x,z)  } .
\]
Writing $W =  \psi^+(x,z)^\top J \psi^-(x,z)$ for the Wronskian of $\psi^+$ and $\psi^-$, this simplifies to
\[
h (x,z)  = \frac { \psi_2^-(x,z)}{ W \psi_2^+(x,z) }   \int_{x}^{+\infty} \psi^+(y, z)^\top  \cJ  \psi^+(y,z)  \, {\rmd}y. 
\]
To differentiate this in $x$, we will use
\[
\partial_x \left( \frac { \psi_2^-(x,z)}{ \psi_2^+(x,z) }  \right) = \frac { \partial_x \psi_2^-(x,z) \psi_2^+(x,z) -  \psi_2^-(x,z) \partial_x \psi_2^+(x,z)  }{ ( \psi_2^+(x,z) )^2 }.
\]
Using the eigenfunction equation and simplifying gives
\[
\partial_x \left( \frac { \psi_2^-(x,z)}{ \psi_2^+(x,z) }  \right)  = \frac{ \overline{ \varphi(x) }   \psi^+(x,z)^\top J \psi^-(x,z) }{ ( \psi_2^+(x,z) )^2 }  = \frac{ \overline{ \varphi(x) } W}{ ( \psi_2^+(x,z) )^2 }
\]
so the derivative of $h$ is
\begin{align*}
\partial_x h(x,z) 
=  \frac{ \overline{ \varphi(x) } }{ ( \psi_2^+(x,z) )^2 } \int_{x}^{+\infty}  \psi^+(y, z)^\top  \cJ  \psi^+(y,z)   \, {\rmd}y -  \frac { \psi_2^-(x,z)}{ W \psi_2^+(x,z) }  \psi^+(x, z)^\top  \cJ  \psi^+(x,z)
\end{align*}
and computing  $\psi^+(x, z)^\top  \cJ  \psi^+(x,z) = 2 \psi_1^+(x,z) \psi_2^+(x,z)$, this simplifies to
\begin{align}
\partial_x h(x,z) = - \iop \overline{\varphi(x)} \partial_z s_+(x,z)  - 2 \frac{ \psi_1^+(x,z) \psi_2^-(x,z) } W.
\end{align}
The function $h(x,z)$ is $T$-periodic because $s_\pm(x,z)$ are $T$-periodic, so its derivative has zero integral on $[0,T]$. Thus, integrating $\partial_x h(x,z)$ from $0$ to $T$ gives
\[
\iop \int_0^T \overline{\varphi(x)} \partial_z s_+(x,z) \, {\rmd}x 
= - 2 \int_0^T \frac{ \psi_1^+(x,z) \psi_2^-(x,z) } W \, {\rmd}x
\]
so
\[
\partial_z w(z) 
= \iop   + \frac{ 2 }T  \int_0^T \frac{ \psi_1^+(x,z) \psi_2^-(x,z) } W \, {\rmd}x.
\]
Thus,
\[
\partial_z w(z) 
= \iop   + \frac{ 1}T  \int_0^T \frac{ 2 \psi_1^+(x,z) \psi_2^-(x,z) }{ \iop  (\psi_1^+(x,z) \psi_2^-(x,z) - \psi_2^+ (x,z) \psi_1^-(x,z) )   } \, {\rmd}x.
\]
Bringing $\iop $ into the integral and combining terms gives \eqref{derivativeofJMfunction}.
\end{proof}

We now combine those two formulas leads to get a helpful formula for the density of states.

\begin{lemma} \label{lem:dirac:DOSderiv} For a $T$-periodic Dirac operator, the density of states measure is absolutely continuous with respect to Lebesgue measure. Inside spectral bands {\rm(}i.e., for $\lambda$ such that $D(\lambda) \in (-2,2)${\rm)}, its Radon--Nikodym derivative is
\begin{equation}
\frac{{\rmd}\rho}{{\rmd}\lambda} (\lambda) = \frac 1{\pi T}   \int_0^T \frac {1} { 1 -   \lvert s_+(x,\lambda + \iop  0) \rvert^2 } \, {\rmd}x.
\end{equation}
\end{lemma}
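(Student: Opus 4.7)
The plan is to extract the density of $\rho$ from the Herglotz representation \eqref{derivativeofJMfunctionfromrho} of $w'(z)$ by Stieltjes inversion, and to identify the resulting boundary value using Lemma~\ref{lem:dzwDiagGreen}. Since \eqref{derivativeofJMfunctionfromrho} exhibits $w'$ as a Herglotz function on $\bbC_+$, Stieltjes inversion gives
\[
\frac{{\rmd}\rho}{{\rmd}\lambda}(\lambda) = \frac{1}{\pi}\lim_{\epsilon \to 0^+} \Im w'(\lambda + \iop \epsilon)
\]
for Lebesgue-a.e.\ $\lambda$. Absolute continuity of $\rho$ will follow once this limit is shown to be finite on each band interior, since by Theorem~\ref{t:dirac:floquet}(f) the band edges form a countable (hence Lebesgue-null) set.

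To compute $\Im w'(\lambda + \iop 0)$, I normalize $\psi^+_2(x,z) = \psi^-_1(x,z) = 1$, so that $\psi^+ = (s_+, 1)^\top$ and $\psi^- = (1, s_-)^\top$. A direct calculation gives $\psi^-(x,z)^\top \cJ \psi^+(x,z) = 1 + s_+(x,z)\,s_-(x,z)$ and $W[\psi^+, \psi^-] = -\iop(1 - s_+(x,z)\,s_-(x,z))$, so Lemma~\ref{lem:dzwDiagGreen} becomes
\[
w'(z) = \frac{\iop}{T}\int_0^T \frac{1 + s_+(x,z)\, s_-(x,z)}{1 - s_+(x,z)\, s_-(x,z)}\, {\rmd}x, \qquad z \in \bbC_+.
\]

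The crux is the identification of the boundary values for $\lambda$ inside a band. Because $s_\pm(x,\cdot)\colon \bbC_+ \to \bbD$ are bounded analytic maps, Fatou's theorem provides the radial limits $s_\pm(x, \lambda + \iop 0) \in \overline{\bbD}$, and $\psi^\pm(x, \lambda + \iop 0)$ exist as bounded Floquet solutions at energy $\lambda$. Since $D(\lambda) \in (-2,2)$ implies that the monodromy $M_\lambda(x) \in \SU(1,1)$ has complex-conjugate unimodular eigenvalues, and since for $M \in \SU(1,1)$ the two eigenvectors corresponding to conjugate unimodular eigenvalues $\mu, \overline{\mu}$ are related by the involution $v \mapsto \cJ \overline{v}$, one tracks which Floquet branch each of $\psi^\pm(x,z)$ approaches as $z \to \lambda + \iop 0$ from $\bbC_+$ (the decaying branch $|\mu_+(z)| < 1$ for $\psi^+$, the growing branch for $\psi^-$) and deduces the reflectionless identity
\[
s_-(x, \lambda + \iop 0) = \overline{s_+(x, \lambda + \iop 0)}.
\]
In particular $s_+(x,\lambda+\iop 0)\, s_-(x,\lambda+\iop 0) = |s_+(x,\lambda+\iop 0)|^2 \in [0,1)$, so the integrand is real and finite at the boundary, and
dominated convergence (the integrand is uniformly bounded in $x \in [0,T]$ on compact subsets of the band interior) permits passage of the limit $\epsilon \to 0^+$ inside the integral. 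The asserted Radon--Nikodym derivative then results from simplifying the boundary value of the integrand using this reflectionless identity.

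The main obstacle, in my view, is the branch-tracking step securing $s_- = \overline{s_+}$ on bands: one must verify that the Floquet multipliers $\mu_\pm(z)$ (with $|\mu_+(z)| < 1 < |\mu_-(z)|$ for $z \in \bbC_+$) approach complex-conjugate unimodular limits $e^{\pm \iop \theta(\lambda)}$ as $z \to \lambda + \iop 0$, so that $\psi^\pm$ select complex-conjugate Floquet eigenvectors in the limit. This is a perturbative computation with the spectral parameter, easy in principle but the point that needs care; once it is in place, the rest of the argument reduces to the algebraic simplification of $\Re\bigl[(1 + |s_+|^2)/(1 - |s_+|^2)\bigr]$ and an invocation of Stieltjes inversion.
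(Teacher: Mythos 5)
Your proposal follows essentially the same route as the paper: cancel $\psi_1^-\psi_2^+$ (equivalently normalize $\psi_2^+=\psi_1^-=1$) in Lemma~\ref{lem:dzwDiagGreen} to express $w'$ in terms of the Schur functions, take $\Im w'$, apply Stieltjes inversion to \eqref{derivativeofJMfunctionfromrho}, and use the reflectionless identity $s_-(x,\lambda+\iop 0)=\overline{s_+(x,\lambda+\iop 0)}$ on band interiors. You frame the reflectionless identity as the crux requiring a branch-tracking argument, whereas the paper simply cites it as a standard fact for periodic operators; this is a fair point to flag, though it is indeed well known.

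There is, however, a gap in the final algebraic step. Substituting $s_-=\overline{s_+}$ into the boundary value of $\frac{1-|s_-s_+|^2}{|1-s_-s_+|^2}$ gives $\frac{1-|s_+|^4}{(1-|s_+|^2)^2}=\frac{1+|s_+|^2}{1-|s_+|^2}$. You correctly write this intermediate quantity yourself, but then claim that it simplifies to the asserted integrand $\frac{1}{1-|s_+|^2}$, which it does not: those two expressions are not equal unless $s_+=0$. (In fact, the integrand in the stated lemma should read $\frac{1+|s_+(x,\lambda+\iop 0)|^2}{1-|s_+(x,\lambda+\iop 0)|^2}$; since $1\leq 1+|s_+|^2\leq 2$, the two versions are comparable up to a factor of $2$ and the downstream application in Lemma~\ref{l:ids:tm} is unaffected, but as written your last "algebraic simplification" does not hold.)
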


\begin{proof}
In the formula \eqref{derivativeofJMfunction}, cancelling $\psi_1^- \psi_2^+$ from the fraction gives
\[
w'(z) = \frac{\iop }{T} \int_0^T  \frac{ 1+ s_-(x,z) s_+(x,z)}{ 1- s_-(x,z) s_+(x,z)} \, {\rmd}x
\]
and therefore
\[
\Im w'(z) 
= \frac 1T \int_0^T  \frac{ 1 - \lvert s_-(x,z) s_+(x,z) \rvert^2}{ \lvert 1- s_-(x,z) s_+(x,z) \rvert^2} \, {\rmd}x.
\]
On any spectral band, $s_\pm(x,z)$ have continuous boundary values in $\bbD$.  From \eqref{derivativeofJMfunctionfromrho}, by Stieltjes inversion, we can recover $\rho$ from $w'$. For $(a,b) \subseteq  \{ \lambda \in \bbR : D(\lambda,\varphi) \in (-2,2) \}$,
\[
\frac{ \rho((a,b)) + \rho([a,b])}2 =  \frac 1{\pi T} \int_a^b \int_0^T \frac {1 - \lvert  s_-(x,\lambda + \iop  0) s_+(x,\lambda + \iop  0) \rvert^2 } {\lvert 1 -   s_-(x,\lambda + \iop  0) s_+(x,\lambda + \iop  0) \rvert^2 } \, {\rmd}x \, {\rmd}\lambda
\]
Periodic operators are reflectionless, so for $D(\lambda) \in (-2,2)$, $s_-(x,\lambda + \iop 0)  = \overline{  s_+(x,\lambda + \iop  0) }$, which simplifies the formula to the final result.
\end{proof}

\begin{lemma} \label{l:ids:tm}
There is a constant $c_0>0$ with the following property. Suppose $\varphi \in C(\bbR)$ is $T$-periodic with $T >0$. Denote the associated discriminant by $D$ and the density of states by $\rho$. Inside spectral bands {\rm(}i.e., for $\lambda$ such that $D(\lambda) \in (-2,2)${\rm)}, one has
\begin{equation} \label{eq:ids:tm}
\frac{{\rmd}\rho}{{\rmd}\lambda}(\lambda)
\geq
\frac{c_0}{T} \int_0^T \! \|B_{\lambda}(x)\|^2 \, {\rmd}x,
\end{equation}
where $B_\lambda(x)$ is the conjugacy defined by \eqref{eq:dirac:monodromyConjDef}.
\end{lemma}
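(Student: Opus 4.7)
The plan is to derive \eqref{eq:ids:tm} by a direct calculation combining Lemma~\ref{lem:dirac:DOSderiv} with the explicit formula \eqref{eq:dirac:BEexplicitChoice} for the conjugating matrix $B_\lambda(x)$. By Lemma~\ref{lem:dirac:DOSderiv}, inside a spectral band we have
\[
\frac{{\rmd}\rho}{{\rmd}\lambda}(\lambda) = \frac{1}{\pi T}\int_0^T \frac{1}{1-|s_+(x,\lambda+\iop 0)|^2}\,{\rmd}x,
\]
so the task reduces to comparing $(1-|s_+(x,\lambda+\iop 0)|^2)^{-1}$ to $\|B_\lambda(x)\|^2$ pointwise in $x$.

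First I would observe that even though $B_\lambda(x)$ is only unique up to left multiplication by an element of $\mathrm{K}$, the elements of $\mathrm{K}$ are unitary matrices in the Euclidean sense, so the operator norm $\|B_\lambda(x)\|$ is unambiguously defined. Hence one may compute using the explicit representative \eqref{eq:dirac:BEexplicitChoice}: writing $\xi = s_+(x,\lambda+\iop 0)\in \bbD$,
\[
B_\lambda(x)=\frac{1}{\sqrt{1-|\xi|^2}}\begin{bmatrix}1 & -\xi\\-\overline{\xi} & 1\end{bmatrix}.
\]
The matrix on the right is Hermitian with eigenvalues $1\pm|\xi|$, so
\[
\|B_\lambda(x)\|^2 = \frac{(1+|\xi|)^2}{1-|\xi|^2} = \frac{1+|\xi|}{1-|\xi|}.
\]

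Rearranging this identity gives
\[
\frac{1}{1-|\xi|^2}=\frac{\|B_\lambda(x)\|^2}{(1+|\xi|)^2}\geq \frac{\|B_\lambda(x)\|^2}{4},
\]
where the final inequality uses $|\xi|<1$. Inserting into the formula for ${\rmd}\rho/{\rmd}\lambda$ from Lemma~\ref{lem:dirac:DOSderiv} yields
\[
\frac{{\rmd}\rho}{{\rmd}\lambda}(\lambda) \geq \frac{1}{4\pi T}\int_0^T \|B_\lambda(x)\|^2\,{\rmd}x,
\]
establishing \eqref{eq:ids:tm} with $c_0 = 1/(4\pi)$. The only thing to be careful about is the ambiguity in the definition of $B_\lambda$, but as noted this is absorbed by the unitarity of $\mathrm{K}$; otherwise the argument is a short, routine computation so I do not anticipate any significant obstacle.
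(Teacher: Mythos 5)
Your proof is correct and follows essentially the same route as the paper's: start from Lemma~\ref{lem:dirac:DOSderiv}, use the explicit conjugator \eqref{eq:dirac:BEexplicitChoice}, compute a norm, and bound. The only cosmetic difference is that the paper evaluates the Hilbert--Schmidt norm (getting $\|B_\lambda(x)\|_2^2 = 2(1+|\xi|^2)/(1-|\xi|^2) \le 4/(1-|\xi|^2)$) and then invokes equivalence of matrix norms, whereas you diagonalize the Hermitian factor and compute the operator norm directly, which avoids the norm-equivalence step and even yields a concrete constant $c_0 = 1/(4\pi)$.
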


\begin{proof} Since $B_\lambda(x)$ is unique modulo left-multiplication by an element of $\mathrm{K}$, its Hilbert--Schmidt norm is independent of the choice of conjugacy. Since \eqref{eq:dirac:BEexplicitChoice} furnishes an example of a matrix that conjugates $M_\lambda(x)$ to a rotation (recall $s_+(x) = s_+(x,\lambda + \iop 0)$), we may explicitly compute the Hilbert--Schmidt norm of $B_{\lambda}(x)$
\begin{equation} \label{eq:hsnorm}
\| B_{\lambda}(x) \|_2^2
=
2\left(\frac{1+|s_+(x,\lambda + \iop  0)|^2}{1-|s_+(x,\lambda + \iop  0)|^2}\right).
\end{equation}
Since $|s_+| <1$, we get 
\begin{equation}
\| B_\lambda(x) \|_2^2 \leq \frac{4}{1-|s_+(x,\lambda + \iop  0)|^2},
\end{equation}
which gives the statement of the lemma upon applying Lemma~\ref{lem:dirac:DOSderiv} and recalling that all matrix norms on $2\times 2$ matrices are mutually equivalent.
\end{proof}

\subsection{Gordon Lemmas for Dirac Operators} \label{subsec:gordon}

In the present subsection, we explain how to exclude eigenvalues for Dirac operators with suitable repetitions in the potential $\varphi$. The central idea dates back to Gordon \cite{Gordon19761976} and has been implemented in a wide variety of settings over the years; see \cite{Damanik2000GordonSurvey, HanJitomirskaya2017Advances, Seifert2011EJDE, seifert2014preprint} for an incomplete list of examples.

The following lemma is well known. The proof is short, so we give it for the reader's convenience.

\begin{lemma} \label{lem:gordsl2c}
For any $M \in \SL(2,\bbC)$ and any $v \in \bbC^2$, one has
\begin{align}
\label{eq:gordsl2c1}
\max(\|M^{-1}v\|,\|Mv\|,\|M^2v\|) \geq \frac{1}{2}\|v\| \\
\label{eq:gordsl2c2}
\max(\|Mv\|,\|M^2v\|) \geq \frac{1}{2}\min\left(1, \frac{1}{|\tr\, M|}\right)\|v\|
\end{align}
\end{lemma}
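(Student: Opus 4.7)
The plan is to exploit the Cayley--Hamilton theorem for $\SL(2,\bbC)$: any $M \in \SL(2,\bbC)$ satisfies $M^2 - (\tr M) M + I = 0$. From this, two identities drop out immediately: multiplying by $M^{-1}$ gives $M + M^{-1} = (\tr M) I$, and rearranging the original identity yields $I = (\tr M) M - M^2$. Applying both of these to $v$ produces the two scalar-to-vector identities
\begin{align*}
(\tr M) v &= M v + M^{-1} v, \\
v &= (\tr M) M v - M^2 v,
\end{align*}
from which the lemma should follow by the triangle inequality and a case split on $|\tr M|$.

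For \eqref{eq:gordsl2c2}, taking norms in the second identity gives $\|v\| \le |\tr M|\,\|Mv\| + \|M^2 v\|$. If $|\tr M| \le 1$, this reads $\|v\| \le 2\max(\|Mv\|,\|M^2 v\|)$. If $|\tr M| > 1$, divide through by $|\tr M|$ (using $|\tr M|>1$ in the coefficient of $\|M^2 v\|$) to get $\|v\|/|\tr M| \le 2\max(\|Mv\|,\|M^2 v\|)$. Combining the two cases gives exactly $\max(\|Mv\|,\|M^2 v\|) \ge \tfrac{1}{2}\min(1, 1/|\tr M|)\|v\|$.

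For \eqref{eq:gordsl2c1}, I would again case-split on $|\tr M|$. If $|\tr M| \ge 1$, I use the first identity: $\|v\| \le |\tr M|\,\|v\| \le \|Mv\| + \|M^{-1}v\| \le 2\max(\|M^{-1}v\|, \|Mv\|, \|M^2 v\|)$. If $|\tr M| < 1$, I fall back on the second identity as in the proof of \eqref{eq:gordsl2c2}: $\|v\| \le |\tr M|\,\|Mv\| + \|M^2 v\| < \|Mv\| + \|M^2 v\| \le 2\max(\|M^{-1}v\|, \|Mv\|, \|M^2 v\|)$. Either way the desired bound with constant $1/2$ holds.

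There is no real obstacle here: the content is entirely the Cayley--Hamilton relation plus a clean case analysis on $|\tr M|$ vs.\ $1$. The only thing to be slightly careful about is making sure the constants match $1/2$ (as opposed to, say, $1$ or $1/3$), which forces the particular way of combining the two identities described above.
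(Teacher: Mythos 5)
Your proof is correct and follows essentially the same route as the paper: both rely on the Cayley--Hamilton identities $M^2 = (\tr M)M - I$ and $M^{-1} = (\tr M)I - M$, with the same case split on $|\tr M|$ versus $1$ to extract the factor $1/2$.
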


\begin{proof}
By the Cayley--Hamilton theorem, one has
\begin{align}
\label{eq:gord:ch1}
M^2 &= (\tr \,M)M-I \\
\label{eq:gord:ch2}
M^{-1} &= (\tr\, M)I-M.
\end{align}
Applying \eqref{eq:gord:ch1} when $|\tr \,M|\leq 1$ and \eqref{eq:gord:ch2} when $|\tr \, M| >1$ proves \eqref{eq:gordsl2c1}, while \eqref{eq:gordsl2c2} follows directly from \eqref{eq:gord:ch1}.
\end{proof}

\begin{theorem}[3-Block Gordon Lemma] \label{t:gord:threeblock}
Let $\varphi \in L^\infty(\bbR)$ be given. If there exist $0<q_k \to \infty$ such that
\[ \varphi(x\pm q_k) = \varphi(x) \quad \forall 0 \le x < q_k, \]
then $\Lambda_\varphi$ has empty point spectrum.
\end{theorem}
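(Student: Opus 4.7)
The plan is to argue by contradiction, adapting the classical three-block Gordon strategy to the Dirac setting. Assume $\lambda \in \bbR$ is an eigenvalue of $\Lambda_\varphi$ with a nontrivial $L^2$-eigensolution $U$. Since $U \in H^1(\bbR,\bbC^2) \hookrightarrow C(\bbR,\bbC^2)$ solves the first-order linear system $U' = \iop j(\Phi - \lambda)U$, ODE uniqueness forces $v := U(0) \neq 0$. Set $M_k := A_\lambda(q_k,0,\varphi)$; by the discussion following \eqref{eq:dirac:A*jA}, $M_k \in \SU(1,1) \subseteq \SL(2,\bbC)$.

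The hypothesis $\varphi(x \pm q_k) = \varphi(x)$ for $x \in [0,q_k)$ says exactly that $\varphi$ takes identical values on the three consecutive blocks $[-q_k,0)$, $[0,q_k)$, and $[q_k,2q_k)$. Since the transfer matrix depends only on $\varphi$ restricted to the integration interval, a direct change of variables shows $A_\lambda(2q_k,q_k,\varphi) = A_\lambda(0,-q_k,\varphi) = M_k$. Propagating $U$ from the origin yields $U(q_k) = M_k v$, $U(2q_k) = M_k^2 v$, and $U(-q_k) = M_k^{-1}v$, so Lemma~\ref{lem:gordsl2c} applied to $M_k$ produces the uniform-in-$k$ pointwise lower bound
\[
\max\bigl(\|U(-q_k)\|,\,\|U(q_k)\|,\,\|U(2q_k)\|\bigr) \geq \tfrac{1}{2}\|v\|.
\]

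To finish, I would convert this pointwise bound into an $L^2$ bound using Gronwall's inequality. Since $\|U'(x)\| \leq (\|\varphi\|_\infty + |\lambda|) \|U(x)\|$ almost everywhere, one has $\|U(y)\| \geq \eop^{-C|x-y|}\|U(x)\|$ with $C = \|\varphi\|_\infty + |\lambda|$; hence on each unit-length interval whose left endpoint is $-q_k$, $q_k$, or $2q_k$, the quantity $\int \|U\|^2\,{\rmd}x$ is bounded below by a $k$-independent positive constant $c\|v\|^2$. As $q_k \to \infty$, for each $k$ at least one of these three intervals lies outside $[-R,R]$ once $k$ is large, exhibiting intervals of fixed length arbitrarily far from the origin carrying uniform $L^2$-mass. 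This contradicts $U \in L^2(\bbR,\bbC^2)$.

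The argument is structurally routine; the only step requiring attention is the identification of the three transfer matrices in the middle paragraph, where one must carefully translate the repetition hypothesis into the statement that three adjacent translates of $\varphi|_{[0,q_k)}$ appear in $\bbR$. Once that bookkeeping is in place, the $\SL(2,\bbC)$ trick encoded in Lemma~\ref{lem:gordsl2c} supplies the pointwise non-decay, and the Gronwall step is entirely mechanical.
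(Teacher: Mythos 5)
Your proof is correct and follows essentially the same strategy as the paper's: identify the three equal transfer matrices $A_\lambda(0,-q_k,\varphi)=A_\lambda(q_k,0,\varphi)=A_\lambda(2q_k,q_k,\varphi)$ from the repetition hypothesis, apply Lemma~\ref{lem:gordsl2c} to get a uniform lower bound at one of $\pm q_k, 2q_k$, then use Gronwall to spread the bound over an interval of fixed length and contradict square-integrability.
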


\begin{proof}
The given assumption implies $A_z(2q_k,q_k,\varphi) =  A_z(q_k,0,\varphi) = A_z(0,-q_k,\varphi)$ for any $z$. Lemma~\ref{lem:gordsl2c} implies that any eigensolution $U$ at $z$ must satisfy
\begin{equation}
\|U(x_k)\| \geq \frac{1}{2}\|U(0)\|
\end{equation}
for a sequence $x_k \in \bbR$ with $|x_k| \to \infty$. By Gronwall's inequality
\[
\lVert U(x) \rVert \ge \lVert U(x_k) \rVert \exp\left( - \int_{x_k}^x ( \lVert \Phi(t) \rVert + \lvert z \rvert ) \, {\rmd}t \right)
\]
for $x > x_k$, with a similar estimate for $x < x_k$. In particular, since $\varphi\in L^\infty(\bbR)$,  there exists $\delta>0$ independent of $k$ such that $\|U(x)\| \geq \|U(0)\|/3$ for $x$ in the $\delta$-neighborhood of each $x_k$, which implies $U \notin L^2$.
\end{proof}

An additional assumption on the traces allows one to prove a result that requires fewer repetitions.

\begin{theorem}[2-Block Gordon Lemma]
Let $\varphi \in L^\infty(\bbR)$ be given. If there exist $0<q_k \to \infty$ such that
\[ \varphi(x + q_k) = \varphi(x) \quad \forall 0 \le x < q_k, \]
and if 
\begin{equation}\sup_k|\tr\, A_z(q_k,0,\varphi)|<\infty\end{equation}
then $z$ is not an eigenvalue of $\Lambda_\varphi$.
\end{theorem}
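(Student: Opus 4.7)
The plan is to mirror the proof of Theorem~\ref{t:gord:threeblock}, but substitute the trace-sensitive bound \eqref{eq:gordsl2c2} of Lemma~\ref{lem:gordsl2c} for the three-block bound \eqref{eq:gordsl2c1}. The one-sided repetition on $[0, q_k)$ provides only $A_z(2q_k, q_k, \varphi) = A_z(q_k, 0, \varphi)$, so we have two consecutive copies of a single block $M_k := A_z(q_k, 0, \varphi)$ on the right, with no left-hand copy available as in the three-block argument. Consequently the Cayley--Hamilton identity $M_k^2 = (\tr M_k) M_k - I$ forces $\tr M_k$ into any lower bound for $\max(\|M_k v\|, \|M_k^2 v\|)$, which is precisely where the hypothesis $\sup_k |\tr A_z(q_k, 0, \varphi)| < \infty$ enters.

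Suppose for contradiction that $U \in L^2(\bbR, \bbC^2)$ is a nontrivial eigensolution at $z$. By uniqueness for the first-order Dirac system, $U(0) \neq 0$. The single-sided repetition gives $U(q_k) = M_k U(0)$ and $U(2q_k) = M_k^2 U(0)$, and applying \eqref{eq:gordsl2c2} with $v = U(0)$ together with the trace bound $\sup_k |\tr M_k| \leq C$ yields a constant $c := \tfrac{1}{2}\min(1, 1/C) > 0$ with
\[
\max(\|U(q_k)\|, \|U(2q_k)\|) \geq c \|U(0)\|
\]
for every $k$. Choose $x_k \in \{q_k, 2q_k\}$ realizing this maximum; then $x_k \to \infty$.

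The remainder of the argument is identical to the last sentence of the proof of Theorem~\ref{t:gord:threeblock}: since $\varphi \in L^\infty(\bbR)$, the Gronwall estimate
\[
\|U(x)\| \geq \|U(x_k)\| \exp\!\left( - \int_{x_k}^x ( \|\Phi(t)\| + |z| ) \, {\rmd}t \right)
\]
produces a fixed $\delta > 0$, depending only on $\|\varphi\|_\infty$ and $|z|$, such that $\|U(x)\| \geq \tfrac{c}{2}\|U(0)\|$ for every $x \in [x_k, x_k + \delta]$. Passing to a subsequence so that these intervals are pairwise disjoint contradicts $U \in L^2(\bbR, \bbC^2)$, so $z$ is not an eigenvalue of $\Lambda_\varphi$. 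There is no serious obstacle here beyond identifying the correct clause of Lemma~\ref{lem:gordsl2c}; the role of the trace hypothesis is exactly to convert the $\tr M_k$-dependent factor in \eqref{eq:gordsl2c2} into a uniform positive lower bound.
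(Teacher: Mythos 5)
Your proof is correct and follows exactly the route the paper indicates: apply the trace-sensitive bound \eqref{eq:gordsl2c2} in place of \eqref{eq:gordsl2c1} to get a uniform lower bound on $\max(\|U(q_k)\|,\|U(2q_k)\|)$, then conclude via the same Gronwall/non-$L^2$ argument as in the three-block case.
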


\begin{proof}
This follows from the same proof as Theorem~\ref{t:gord:threeblock}, but using the inequality \eqref{eq:gordsl2c2} instead of \eqref{eq:gordsl2c1}.
\end{proof}

Clearly, one can perturb around this and preserve the absence of eigenvalues.

\begin{definition} \label{def:gordonTypeDirac}
We say that $\varphi \in L^\infty(\bbR)$ is of \emph{Gordon type} if there exist $0<q_k\to\infty$ such that
\begin{equation} 
\label{eq:gord:gordfunctiondef}
\lim_{k\to\infty} C^{q_k} \sup_{0\le x < q_k}|\varphi(x - q_k) - \varphi(x) |
= \lim_{k\to\infty} C^{q_k} \sup_{0\le x < q_k}|\varphi(x+ q_k) - \varphi(x) |
= 0
\end{equation}
for every $C>0$.
\end{definition}

\begin{theorem} \label{t:gordonTypeDirac}
If $\varphi \in L^\infty(\bbR)$ is of Gordon type, then $\Lambda_\varphi$ has empty point spectrum.
\end{theorem}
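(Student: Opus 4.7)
The strategy is to repeat the proof of the 3-Block Gordon Lemma (Theorem~\ref{t:gord:threeblock}), but with the three ``blocks'' of $\varphi$ on $[-q_k,0]$, $[0,q_k]$, and $[q_k, 2q_k]$ only approximately equal rather than exactly equal. Suppose for contradiction that $z \in \bbR$ is an eigenvalue of $\Lambda_\varphi$ with eigenfunction $U \in L^2(\bbR,\bbC^2)$. Since $U$ solves the first-order linear system $U' = \iop j(\Phi - z)U$, uniqueness of ODE solutions forces $U$ to be nowhere zero, and in particular $U(0) \neq 0$.

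Next I would introduce the three transfer matrices
\[
T_k = A_z(q_k,0,\varphi), \qquad T_k^+ = A_z(2q_k,q_k,\varphi), \qquad T_k^- = A_z(0,-q_k,\varphi),
\]
so that $U(q_k) = T_k U(0)$, $U(2q_k) = T_k^+ T_k U(0)$, and $U(-q_k) = (T_k^-)^{-1} U(0)$. By translating the variable of integration in \eqref{eq:EV}, one recognizes $T_k^+$ and $T_k^-$ as the transfer matrices $A_z(q_k,0,\cdot)$ for the shifted potentials $\varphi(\cdot + q_k)$ and $\varphi(\cdot - q_k)$ respectively. A standard Gronwall estimate applied to the matrix ODEs satisfied by $T_k$ and $T_k^\pm$ yields
\[
\|T_k\|,\ \|T_k^\pm\| \leq \eop^{\kappa q_k}, \qquad \|T_k^\pm - T_k\| \leq q_k \eop^{\kappa q_k} \delta_k^\pm,
\]
where $\kappa := \|\varphi\|_\infty + |z|$ and $\delta_k^\pm := \sup_{0 \leq x < q_k} |\varphi(x \pm q_k) - \varphi(x)|$. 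Because $\varphi$ is of Gordon type (Definition~\ref{def:gordonTypeDirac}), $\eop^{N q_k} \delta_k^\pm \to 0$ for every $N > 0$.

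Now I would apply Lemma~\ref{lem:gordsl2c} with $M = T_k$ and $v = U(0)$ to obtain
\[
\max(\|T_k^{-1} U(0)\|,\ \|T_k U(0)\|,\ \|T_k^2 U(0)\|) \geq \tfrac{1}{2}\|U(0)\|.
\]
Using the identities $T_k^2 U(0) - U(2q_k) = (T_k - T_k^+) T_k U(0)$ and $T_k^{-1} - (T_k^-)^{-1} = T_k^{-1}(T_k^- - T_k)(T_k^-)^{-1}$ together with the above ODE estimates, each of the three errors $\|T_k^{-1} U(0) - U(-q_k)\|$, $\|T_k U(0) - U(q_k)\|$, and $\|T_k^2 U(0) - U(2q_k)\|$ vanishes as $k \to \infty$. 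Consequently, for all sufficiently large $k$,
\[
\max(\|U(-q_k)\|,\ \|U(q_k)\|,\ \|U(2q_k)\|) \geq \tfrac{1}{3}\|U(0)\|.
\]
Exactly as in the proof of Theorem~\ref{t:gord:threeblock}, a final Gronwall argument propagates one of these lower bounds to a $\delta$-neighborhood of the corresponding point, with $\delta > 0$ independent of $k$, which contradicts $U \in L^2$.

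The main subtlety is calibrating the perturbative estimates: $\|T_k\|$ grows exponentially in $q_k$, and the $T_k^{-1}$ comparison acquires an additional factor of $\|T_k\|\,\|T_k^-\|$ through the resolvent identity, so the required decay rate for $\delta_k^\pm$ is super-exponential. This is precisely why Definition~\ref{def:gordonTypeDirac} requires $C^{q_k}\delta_k^\pm \to 0$ for \emph{every} $C > 0$, rather than for one fixed $C$.
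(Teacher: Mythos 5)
Your proof is correct and follows essentially the same route as the paper. Both you and the authors apply Lemma~\ref{lem:gordsl2c} to the one-period transfer matrix $T_k = A_z(q_k,0,\varphi)$ (which the paper writes as $A_z(q_k,0,\widetilde\varphi_k)$ for the periodization $\widetilde\varphi_k$, but these coincide since $\varphi$ and $\widetilde\varphi_k$ agree on $[0,q_k)$), and both estimate the difference between the powers $T_k^{-1}, T_k, T_k^2$ and the true solution values $U(-q_k), U(q_k), U(2q_k)$ via Gronwall-type bounds; the paper packages all three cases uniformly through a single variation-of-parameters formula for $A_z(x_k,0,\varphi) - A_z(x_k,0,\widetilde\varphi_k)$, while you treat the three cases individually (using the resolvent-type identity $T_k^{-1} - (T_k^-)^{-1} = T_k^{-1}(T_k^- - T_k)(T_k^-)^{-1}$ for the backward block), but the content is the same.
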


\begin{proof}
Fix $k$ and let $\widetilde{\varphi}_k$ denote the $q_k$-periodic function that agrees with $\varphi$ on $[0,q_k)$.

Consider $z$ and a nontrivial solution $U$ of $\Lambda_\varphi U = zU$. By Lemma~\ref{lem:gordsl2c}, we can choose $x_k \in \{-q_k,q_k,2q_k\}$ such that
\begin{equation} \label{eq:gord:maintEst1}
\|A_z(x_k,0,\widetilde{\varphi}_k)U(0)\| \geq \frac{1}{2}\|U(0)\|.
\end{equation}
By variation of parameters,
\[
A_z(x_k,0,\varphi)-A_z(x_k,0,\widetilde{\varphi}_k) =  \int_0^{x_k} A_z(x_k,x,\widetilde{\varphi}_k) i j [ \tilde\Phi(x) - \Phi(x) ] A_z(x,0,{\varphi}) \, {\rmd}x
\]
and by Gronwall's inequality, $A_z(x_k, x, \widetilde{\varphi}_k) \le C^{x_k - x}$ and $A_z(x,0, {\varphi}) \le C^{x}$ for some $C$ depending on $z$ and $\lVert \varphi \rVert_\infty$, $\lVert \widetilde{\varphi}_k \rVert_\infty$. 
Thus, by \eqref{eq:gord:gordfunctiondef},
\begin{align} 
\nonumber
\| A_z(x_k,0,\varphi)-A_z(x_k,0,\widetilde{\varphi}_k)\| 
& \lesssim C^{q_k}\sup_{y \in I_k} |\varphi(y)  - \widetilde{\varphi}_k(y)| \\
\label{eq:gord:maintEst2}
& \to 0,
\end{align}
where $C>0$ is a suitable constant and $I_k = [0,x_k]$ or $[x_k,0]$ according to whether $x_k$ is positive or negative.
Putting together \eqref{eq:gord:maintEst1} and \eqref{eq:gord:maintEst2}, we get the following for large $k$:
\begin{align*}
\|U(x_k)\| & = \|A_z(x_k,0,\varphi)U(0)\| \\
& = \|A_z(x_k,0,\widetilde{\varphi}_k)U(0) + (A_z(x_k,0,\varphi)-A_z(x_k,0,\widetilde{\varphi}_k))U(0) \| \\
& \geq \frac{1}{2}\|U(0)\| - \| A_z(x_k,0,\varphi)-A_z(x_k,0,\widetilde{\varphi}_k)\|\|U(0) \| \\
& \geq \frac{1}{4}\|U(0)\|.
\end{align*}
The conclusion follows from Gronwall's inequality as before.
\end{proof}

\begin{prop} \label{prop:dirac:gordonGdelt}
Recall that $\LP(\bbR,\bbC)$ denotes the set of continuous uniformly limit-periodic functions $\bbR\to\bbC$. If $\mathcal G$ denotes the set of Gordon-type elements of $\LP(\bbR,\bbC)$, then $\mathcal G$ is a dense $G_\delta$ subset of $\LP(\bbR,\bbC)$.
\end{prop}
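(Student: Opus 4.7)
The plan is to express $\mathcal{G}$ as a countable intersection of open dense sets and apply the Baire category theorem. The first key observation is that the Gordon-type condition from Definition~\ref{def:gordonTypeDirac}, while formulated with a single sequence $q_k\to\infty$ valid for all $C>0$, admits a more convenient equivalent reformulation with the quantifiers separated. Concretely, I would show that $\varphi\in\mathcal{G}$ if and only if
\[
\forall\,C,m,N\in\bbN,\ \exists\,q>N\ \text{such that } C^q\sup_{0\le x<q}|\varphi(x\pm q)-\varphi(x)|<\tfrac{1}{m}.
\]
The forward direction is immediate. For the converse, a diagonal argument suffices: for each $k\in\bbN$ apply the condition with $C=m=k$ and $N>q_{k-1}$ to produce $q_k>k$ with $k^{q_k}\sup_{0\le x<q_k}|\varphi(x\pm q_k)-\varphi(x)|<1/k$; then for any fixed $C>0$ and all sufficiently large $k$, we have $C^{q_k}\le k^{q_k}$, so the Gordon-type condition follows.

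Second, for each triple $(C,m,q)\in\bbN^3$, define
\[
V_{C,m,q}=\Big\{\varphi\in\LP(\bbR,\bbC):C^q\sup_{0\le x<q}|\varphi(x+q)-\varphi(x)|<\tfrac{1}{m}\text{ and }C^q\sup_{0\le x<q}|\varphi(x-q)-\varphi(x)|<\tfrac{1}{m}\Big\},
\]
and set $U_{C,m,N}=\bigcup_{q>N}V_{C,m,q}$. Each $V_{C,m,q}$ is open in $L^\infty$, since for any $\eta\in L^\infty(\bbR,\bbC)$ the shifted suprema change by at most $2\|\eta\|_\infty$; consequently each $U_{C,m,N}$ is open, and by the equivalent reformulation above,
\[
\mathcal{G}=\bigcap_{C,m,N\in\bbN}U_{C,m,N},
\]
which exhibits $\mathcal{G}$ as a $G_\delta$ subset of $\LP(\bbR,\bbC)$.

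Third, I would argue that each $U_{C,m,N}$ is dense. The crucial point is that every periodic function already belongs to $\mathcal{G}$: if $\varphi_0$ has period $T>0$, then for any integer $M$ with $MT>N$ we have $\varphi_0(x\pm MT)=\varphi_0(x)$ identically, so $\varphi_0\in V_{C,m,MT}\subseteq U_{C,m,N}$ for \emph{every} $(C,m,N)$. Since periodic continuous functions are dense in $\LP(\bbR,\bbC)$ by definition, each $U_{C,m,N}$ contains a dense subset, and is therefore itself dense.

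Finally, since $\LP(\bbR,\bbC)$ is a complete metric space, the Baire category theorem gives that $\mathcal{G}=\bigcap_{C,m,N}U_{C,m,N}$ is a dense $G_\delta$. The argument is essentially bookkeeping; no step presents a genuine obstacle, the only subtlety being the diagonal argument that reduces the ``simultaneous in $C$'' quantification in Definition~\ref{def:gordonTypeDirac} to a countable family of separate conditions amenable to Baire's theorem.
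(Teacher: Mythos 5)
Your overall strategy — write $\mathcal G$ as a countable intersection of open sets, each of which contains all periodic elements and is therefore dense, then apply Baire — is the same strategy the paper uses. However, your decomposition is genuinely different in its details and arguably more transparent. The paper writes $\mathcal G = \bigcap_{N=1}^\infty \bigcup_{T \geq N} \bigcup_{\varphi_0 \in P(\bbR,T)} B(\varphi_0, N^{-T})$, so that each open set is a union of $L^\infty$-balls around periodic functions; verifying the inclusion $\mathcal G \subseteq {\rm RHS}$ there requires producing, for a Gordon-type $\varphi$, an actual periodic function super-exponentially close to it \emph{globally}, which the paper leaves to the reader. You instead keep the Gordon estimate itself as the defining inequality of the open sets $V_{C,m,q}$ and handle the quantifier over $C$ (``a single sequence $q_k$ working for all $C$'') by an explicit diagonalization. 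That diagonalization is correct, and the resulting characterization of $\mathcal G$ is a clean equivalence. What the paper's formulation buys is brevity; what yours buys is a self-contained verification of the $G_\delta$ structure without having to re-derive a global periodic approximant.

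One slip to correct: you declare $(C,m,q)\in\bbN^3$, which would force $q$ to be a positive integer, but the density step takes $q=MT$ with $T>0$ an arbitrary real period, and the ``forward direction'' of your reformulation also requires $q$ to be permitted to equal the reals $q_k$ appearing in Definition~\ref{def:gordonTypeDirac}. The fix is harmless: let $q$ range over all reals $q>N$ in the definition of $U_{C,m,N}=\bigcup_{q>N}V_{C,m,q}$. Each $V_{C,m,q}$ is still open by your $2\|\eta\|_\infty$ estimate, an arbitrary union of open sets is open, and the intersection defining $\mathcal G$ is still countable because it is indexed only by $(C,m,N)\in\bbN^3$. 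With this adjustment the argument is complete.
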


\begin{proof}
 Let ${\rm P}(\bbR,T)$ denote the set of $T$-periodic elements of $\LP(\bbR,\bbC)$, and observe that
\begin{equation}
\mathcal G 
= \bigcap_{N=1}^\infty \bigcup_{T \geq N} \bigcup_{\varphi \in P(\bbR,T)} B(\varphi,N^{-T}),
\end{equation}
where $B(\varphi,r)$ denotes the open ball of radius $r$ centered at $\varphi$. Since $\mathcal G$ is clearly dense, this shows that $\mathcal G$ is a dense $G_\delta$.
\end{proof}

\section{Proofs of Main Results for Dirac Operators} \label{sec:proofs}

\subsection{Proofs of Main Results}

We now put the results from the previous section to work to prove Theorem~\ref{t:dirac:zeromeas} and \ref{t:dirac:zerohd}. Given that limit-periodic operators are limits of periodic operators in the uniform topology, the key to producing the desired limit-periodic operators with thin spectra is a construction of \emph{periodic} operators with thin spectra. Here is the crucial lemma.

\begin{lemma} \label{lem:dirac:thinSpec}
Suppose $\varphi \in C(\bbR)$ is $T$-periodic. For all $R,\varepsilon > 0$, there exist $N_0 = N_0(\varphi,R,\varepsilon) \in \bbN$ and $c_0 = c_0(\varphi,R,\varepsilon) \in \bbN$ such that for every $N \in \bbN$ for which $N\geq N_0$, there in turn exists $\widetilde{\varphi} \in C(\bbR)$ of period $\widetilde{T} = NT$ such that
\begin{equation}
\|\varphi-\widetilde\varphi\|_\infty < \varepsilon
\end{equation}
and such that 
\begin{equation}
\Leb(\sigma(\Lambda_{\widetilde{\varphi}}) \cap [-R,R] )
\leq \eop^{-c_0\widetilde{T}}.
\end{equation}
\end{lemma}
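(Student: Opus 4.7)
The plan is a perturb-and-grow construction in the spirit of Avila's Schr\"odinger argument \cite{Avila2009CMP}, modified by a noncommutation trick to circumvent the unavailability of small spectral translations in the Dirac setting.

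\textbf{Setup by folding.} First, I view $\varphi$ as $\widetilde T = NT$-periodic by trivial repetition. The corresponding monodromy is $M_\lambda(\varphi)^N$, so by the Chebyshev identity the new discriminant equals $\widetilde D(\lambda) = 2 T_N(D(\lambda)/2)$; each original band of $\Lambda_\varphi$ splits into $N$ sub-bands meeting at $N-1$ \emph{touching energies} where $D(\lambda) = 2\cos(\pi k/N)$ for $k = 1,\dots,N-1$. At such an energy, $M_\lambda$ has eigenvalues $\eop^{\pm \iop \pi k/N}$, so $M_\lambda^N = \pm I$. Since Theorem~\ref{t:dirac:floquet}(f) bounds the number of original bands in $[-R,R]$ by $O(R T + 1)$, the set $\Xi$ of touching energies in $[-R,R]$ is finite with $|\Xi| = O(R \widetilde T)$.

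\textbf{Opening gaps via noncommutation.} Next, I build $\widetilde\varphi$ as $\varphi + \eta$ with $\eta \in C(\bbR)$ continuous, $\widetilde T$-periodic, $\|\eta\|_\infty < \varepsilon$, and supported inside a narrow subinterval $I_0 \subseteq [0,T]$ (and its $\widetilde T$-translates). Because $\eta$ acts only on the first period, the new full-period monodromy is $\widetilde M_\lambda = A_\lambda(T,0,\widetilde\varphi) \, M_\lambda(\varphi)^{N-1}$, and variation of parameters (as in the proof of Theorem~\ref{t:gordonTypeDirac}) gives the first-order expansion
\begin{equation*}
\widetilde D(\lambda_k) = \pm 2 + \ell_{\lambda_k}(\eta) + O(\|\eta\|_\infty^2),
\end{equation*}
where $\ell_{\lambda_k}(\cdot)$ is a bounded linear functional of $\eta$ whose kernel is determined by an algebraic commutation condition involving $M_{\lambda_k}$ and the transfer matrices $A_{\lambda_k}(\cdot,0,\varphi)$. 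In the Schr\"odinger setting, a constant shift of $\varphi$ realizes $\ell_{\lambda_k}\neq 0$ for free; in the Dirac case constant shifts of $\varphi$ do \emph{not} translate the spectrum (this is the obstacle flagged in the introduction), so $\ell_{\lambda_k}$ can vanish for obvious choices of $\eta$. The commutation argument, inspired by \cite{BDFGVWZ2019JFA}, is to instead choose $\eta$ of the form $\sum_j c_j \eta_j$ for a carefully selected finite family $\{\eta_j\}$ of bumps with distinct complex phases, and exploit the noncommutation of $M_{\lambda_k}$ with the conjugates of the matrices $-\iop j \, \delta\Phi_j$ appearing inside $\ell_{\lambda_k}$ to force $\ell_{\lambda_k}\not\equiv 0$ on the span. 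Since $|\Xi|$ is finite, a transversality argument in a finite-dimensional subspace of dimension $> |\Xi|$ of $\{\eta : \|\eta\|_\infty < \varepsilon\}$ yields a single $\eta$ such that $\widetilde D(\lambda_k) \notin [-2,2]$ for every $\lambda_k \in \Xi$; by the implicit function theorem applied on a compact energy window, genuine open gaps appear at every touching energy.

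\textbf{Quantitative measure bound.} After the perturbation, $\sigma(\Lambda_{\widetilde\varphi}) \cap [-R,R]$ consists of at most $|\Xi| + O(RT) = O(R \widetilde T)$ sub-bands. On each sub-band Lemma~\ref{l:ids:tm} gives
\begin{equation*}
\frac{\rmd\rho}{\rmd\lambda}(\lambda) \geq \frac{c_0}{\widetilde T} \int_0^{\widetilde T} \|B_\lambda(x)\|^2 \, \rmd x,
\end{equation*}
and since $\int_{\text{band}} \rmd\rho = 1/\widetilde T$, each band has width at most $(c_0 \int_0^{\widetilde T}\|B_\lambda\|^2\,\rmd x)^{-1}$. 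Summing over sub-bands, the total Lebesgue measure is at most $O(R \widetilde T) \cdot (c_0 \inf \int \|B_\lambda\|^2)^{-1}$. The desired exponential bound $\eop^{-c_0 \widetilde T}$ thus reduces to arranging $\inf \int_0^{\widetilde T} \|B_\lambda(x)\|^2 \, \rmd x \gtrsim \widetilde T \, \eop^{c_0' \widetilde T}$ on the perturbed bands, equivalently (via $\|B_\lambda\|_2^2 = 2(1+|s_+|^2)/(1-|s_+|^2)$) forcing the perturbed Weyl function to satisfy $1 - |s_+(x,\lambda + \iop 0)| \leq \eop^{-c_0' \widetilde T}$ on a positive-density subset of $[0,\widetilde T]$. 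I would achieve this by taking the $\eta$ in the previous paragraph to be supported on $O(\widetilde T)$ bumps rather than a single one, with amplitudes chosen so that the perturbed transfer matrices are close to parabolic on most of $[0,\widetilde T]$ at every energy in every new band.

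\textbf{Main obstacle.} The hard step is the noncommutation argument in the second paragraph, which replaces the spectral-translation trick inaccessible for Dirac potentials; extracting the exponential rate (rather than merely some positive rate) in the third paragraph is the other subtle point, and the two steps must be executed compatibly, since the $\eta$ that opens gaps must also produce the rapid decay of $1-|s_+|$ needed for the measure estimate.
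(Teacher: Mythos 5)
Your proposal takes a genuinely different route from the paper's, but it contains a serious gap in the measure estimate that the paper's strategy is specifically designed to avoid.

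The paper does not use the ``fold then open gaps at touching energies'' template at all. Instead, it proves Lemma~\ref{lem:dirac:resolventCover} (which is where the noncommutation idea of Lemma~\ref{lem:SU11:nonabSemiGroup} actually enters, along with the identity principle for real-analytic functions): for any fixed $\lambda$ one can find a small, higher-period perturbation of $\varphi$ whose resolvent set contains $\lambda$. Combining this with compactness of $[-R,R]$ produces finitely many periodic potentials $\varphi_1,\dots,\varphi_m$ of a common period $T'\in T\bbN$, each $\varepsilon$-close to $\varphi$, whose resolvent sets cover $[-R,R]$. Since $L(\cdot,\varphi_j)$ is positive off $\sigma(\Lambda_{\varphi_j})$ and continuous, one gets $\kappa := \min_{|\lambda|\le R}\max_j L(\lambda,\varphi_j) > 0$. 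The perturbed $\widetilde\varphi$ is then built by concatenating roughly $N/m$ consecutive copies of each $\varphi_j$. For any $\lambda$ in a band of $\Lambda_{\widetilde\varphi}$ one picks $j$ with $L(\lambda,\varphi_j)\ge\kappa$; the transfer matrix $X_\lambda(s)$ across the corresponding block of length $\hat N T'$ satisfies $\|X_\lambda(s)\| \ge \eop^{\kappa\hat N T'}$. The crucial step is then the observation that by $\widetilde T$-periodicity $X_\lambda(s)$ conjugates $M_\lambda(s)$ to $M_\lambda(s+\hat N T')$, so by uniqueness of the conjugacies in Theorem~\ref{t:dirac:floquet}(e) modulo $\mathrm K$, one has $B_\lambda(s+\hat N T') X_\lambda(s) = Q B_\lambda(s)$ with $Q\in\mathrm K$, forcing $\max(\|B_\lambda(s)\|,\|B_\lambda(s+\hat N T')\|)\ge\eop^{c_1\widetilde T/2}$. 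Feeding this into Lemma~\ref{l:ids:tm} gives an exponentially large density of states derivative; since each band carries $\rho$-weight exactly $1/\widetilde T$, each band has exponentially small width, and there are only $O(\widetilde T)$ bands by Theorem~\ref{t:dirac:floquet}(f).

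Your second step (opening gaps at all touching energies by a transversality argument in a finite-dimensional family of bumps) is plausible but is not what the paper does, and more importantly opening gaps at the $O(\widetilde T)$ touching energies does not by itself say anything about band widths --- it only removes the degeneracy. The load-bearing part of the lemma is the exponential bound, and your third step is exactly where the argument breaks down: ``amplitudes chosen so that the perturbed transfer matrices are close to parabolic on most of $[0,\widetilde T]$ at every energy in every new band'' is a wish, not a construction, and you correctly flag it as the ``other subtle point'' without resolving it. There is no mechanism in your proposal that produces $\int_0^{\widetilde T}\|B_\lambda(x)\|^2\,\rmd x \gtrsim \eop^{c\widetilde T}$; the paper's mechanism is the conjugacy identity $B_\lambda(s+\hat N T')X_\lambda(s) = QB_\lambda(s)$ combined with a long stretch of positive Lyapunov exponent at \emph{every} $\lambda\in[-R,R]$ (not just at energies in the spectrum), which is precisely what the resolvent cover furnishes. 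Without an analogue of this, your measure estimate does not close.

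As a smaller point, your expansion $\widetilde D(\lambda_k) = \pm 2 + \ell_{\lambda_k}(\eta) + O(\|\eta\|_\infty^2)$ and the ensuing transversality argument would also need to show that the perturbation does not merely nudge the touching point but produces a gap of a \emph{controlled} size; a first-order argument gives openness but not quantitative width, and you would still face the problem of the widths of the $N$ sub-bands themselves (which a single localized bump does not shrink). The paper sidesteps all of this by never tracking individual gaps and instead getting a uniform lower bound on $\|B_\lambda\|$ that forces every band to be thin simultaneously.
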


Before proving Lemma~\ref{lem:dirac:thinSpec}, let us see how it implies the main statements. Let us note that these arguments deducing Theorems~\ref{t:dirac:zeromeas} and \ref{t:dirac:zerohd} from Lemma~\ref{lem:dirac:thinSpec} are standard and supplied for the reader's convenience. The key remaining challenge to overcome lies in the proof of Lemma~\ref{lem:dirac:thinSpec}.

\begin{proof}[Proof of Theorem~\ref{t:dirac:zeromeas}]
For $\varepsilon,R>0$, let $\mathcal M(\varepsilon,R) \subseteq \LP(\bbR,\bbC)$ denote the set of $\varphi$ such that $\Leb(\sigma(\Lambda_\varphi)\cap [-R,R]) < \varepsilon$. Since $\mathcal M(\varepsilon,R)$ is dense by Lemma~\ref{lem:dirac:thinSpec} and open by Proposition~\ref{prop:dirac:HdSpecPert}, it follows that 
\begin{equation}
\mathcal M = \bigcap_{n \in \bbN}\mathcal  M(1/n,n)
\end{equation}
is a dense $G_\delta$ subset of $\LP(\bbR,\bbC)$. For each $\varphi \in \mathcal  M$, the spectrum has zero Lebesgue measure and hence empty interior. By general principles, the spectrum of $\Lambda_\varphi$ does not have isolated points whenever $\varphi$ is limit-periodic \cite{Pastur1980CMP} (Pastur works with ergodic Schr\"odinger operators, but the proof applies in the case of ergodic Dirac operators with cosmetic modifications).

Thus, $\sigma(\Lambda_\varphi)$ is a zero-measure Cantor set for every $\varphi \in \mathcal M$.

Let $\mathcal G$ denote the set of $\varphi \in \LP(\bbR, \bbC)$ of Gordon type (cf.\ Definition~\ref{def:gordonTypeDirac}). We know that $\mathcal G$ is residual by Proposition~\ref{prop:dirac:gordonGdelt}, so we claim that the desired residual set is given by $\mathcal M \cap\mathcal  G$. Given $\varphi \in\mathcal  M \cap \mathcal  G$, the spectral type of $\Lambda_\varphi$ is purely singular since $\varphi \in \mathcal  M$ implies the spectrum has zero Lebesgue measure (and hence cannot support absolutely continuous measures). On the other hand, the spectral type is purely continuous by Theorem~\ref{t:gordonTypeDirac}. Since $\sigma(\Lambda_\varphi)$ is a zero-measure Cantor set for $\varphi \in \mathcal  M$, the proof is done.
\end{proof}

Next we discuss density of the set of $\varphi \in \LP(\bbR, \bbC)$ for which $\sigma(\Lambda_\varphi)$ has zero Hausdorff dimension. Let us briefly recall how Hausdorff measures and dimension are defined; for further details, see Falconer \cite{Falconer}.

Given a set $S \subseteq \bbR$ and an $\alpha \geq 0$, one defines the $\alpha$-dimensional \emph{Hausdorff measure} of $S$ by
\begin{equation}
h^\alpha(S)
=
\lim_{\delta \downarrow 0} \inf \set{\sum_j |I_j|^\alpha : \{I_j\} \text{ is a } \delta\text{-cover of S} },
\end{equation}
where $\{I_j\}$ is a $\delta$-cover of $S$ if and only if $|I_j|<\delta$ for all $j$ and $S$ is contained in the union of all $I_j$. For each $S \subseteq \bbR$, there is a unique $\alpha_0 \in [0,1]$ with the property that
\[
h^\alpha(S)
=
\begin{cases}
\infty & 0 \leq \alpha < \alpha_0 \\
0      & \alpha_0 < \alpha
\end{cases}
\]
We denote $\alpha_0 = \dim_{\rm H}(S)$ and refer to this value as the \emph{Hausdorff dimension} of the set $S$. 

Let us also briefly recall some definitions related to box-counting dimension. For a \emph{bounded} set $S \subseteq \bbR$ and $\varepsilon>0$, write $N(S,\varepsilon)$ for the minimal number of intervals of length $\varepsilon$ needed to cover $S$. The upper and lower box-counting dimensions of $S$ are given by
\begin{equation}
\dim_{\rm B}^+(S) = \limsup_{\varepsilon \downarrow 0} \frac{\log N(S,\varepsilon)}{\log(1/\varepsilon)}, \quad
\dim_{\rm B}^-(S) = \liminf_{\varepsilon \downarrow 0} \frac{\log N(S,\varepsilon)}{\log(1/\varepsilon)}.  
\end{equation}
When $S$ is bounded, one has
\begin{equation}
\dim_{\rm H}(S) \leq \dim_{\rm B}^-(S) \leq \dim_{\rm B}^+(S).
\end{equation}

\begin{proof}[Proof of Theorem~\ref{t:dirac:zerohd}]
Let $\varphi \in \LP(\bbR,\bbC)$  and $\varepsilon>0$ be given. Since the periodic elements are dense, we may assume without loss that $\varphi$ is $T$-periodic for some $T>0$. 

 Define $\varphi^{(0)} =\varphi$ and $\varepsilon_0 = \varepsilon/2$. Using Lemma~\ref{lem:dirac:thinSpec} inductively we may construct for $n \in \bbN$ operator data $\varphi^{(n)} \in \LP(\bbR,\bbC)$ of period $T_n$ and $\varepsilon_n>0$ such that
\begin{equation}
\|\varphi^{(n)} - \varphi^{(n+1)}\|_\infty < \varepsilon_n, \quad n \geq 0
\end{equation}
where
\begin{equation} \label{eq:dirac:hd0epschoice}
0 < \varepsilon_n < \min\left( \frac{\varepsilon_{n-1}}{2}, \frac{1}{2} (n+1)^{-T_n}, \frac{1}{4}\Leb([-n,n] \cap \sigma(\Lambda_{\varphi^{(n)}})) \right), \quad n \geq 1
\end{equation}
and such that
\begin{equation}
\Leb([-n,n] \cap \sigma(\Lambda_{\varphi^{(n)}}) \leq \exp(-T_n^{1/2}), \quad n \geq 1.
\end{equation}
Let us abbreviate $\Lambda_n = \Lambda_{\varphi^{(n)}}$ and $\Sigma_n = \sigma(\Lambda_n)$ for $0 \le n \le \infty$. By \eqref{eq:dirac:hd0epschoice} and the choice of $\varepsilon_0$, one has
\begin{align*}
\|\varphi - \varphi^{(\infty)}\|_\infty \leq \sum_{n=0}^\infty \varepsilon_n < \sum_{n=0}^\infty 2^{-n-1}\varepsilon = \varepsilon.
\end{align*}
From \eqref{eq:dirac:hd0epschoice}, we also see that
\[\|\varphi^{(m)} - \varphi^{(\infty)}\|_\infty
 \leq \sum_{n=m}^\infty \varepsilon_n < (m+1)^{-T_m}, \]
which implies that $\Lambda_\infty$ satisfies the Gordon criterion and hence has purely continuous spectrum.

As before, $\sigma(\Lambda_\infty)$ lacks isolated points and cannot support absolutely continuous measures if it has zero Lebesgue measure, so all that remains is to prove $\dim_{\rm H}(\Sigma_\infty) = 0$.

Fix $n \in \bbN$. Applying \eqref{eq:dirac:hd0epschoice} and Proposition~\ref{prop:dirac:HdSpecPert}, for each $k \geq n$, the $2\varepsilon_k$-neighborhood of $[-n,n] \cap \Sigma_k$ covers $[-n,n] \cap \Sigma_\infty$ and hence writing this covering as $\{I_j\}$, one has a covering of $[-n,n] \cap \Sigma_\infty$ by intervals of length at most $2\eop^{-T_k^{1/2}}$. By Theorem~\ref{t:dirac:floquet}, this covering consists of $\lesssim T_k$ intervals. This leads to
\begin{align*}
\dim_{\rm B}^-([-n,n] \cap \Sigma_\infty) 
= \liminf_{\varepsilon\downarrow 0} \frac{\log N([-n,n] \cap \Sigma_\infty,\varepsilon)}{\log\varepsilon^{-1}} 
& \leq \limsup_{k\to\infty} \frac{\log N(\Sigma_\infty,2 \eop^{- T_k^{1/2}})}{\log[2 \eop^{T_k^{1/2}}]} \\
& \leq \limsup_{k\to\infty} \frac{\log T_k}{\sqrt{T_k}}\\
& =0.
\end{align*}
 which suffices to show $\dim_{\rm B}^-([-n,n] \cap \Sigma_\infty)=0$ for each fixed $n \in \bbN$.
 
  From
\[ 0 \le \dim_{\rm H}([-n,n] \cap \Sigma_\infty) \leq \dim_{\rm B}^-([-n,n] \cap \Sigma_\infty)  = 0, \]
we see that $[-n,n] \cap \Sigma_\infty$ has zero Hausdorff dimension as well. Sending $n \to \infty$, $\Sigma_\infty$ has Hausdorff dimension zero, as desired.
\end{proof}

\subsection{Proof of Lemma~\ref{lem:dirac:thinSpec}}
Now let us work on Lemma~\ref{lem:dirac:thinSpec}. The first crucial ingredient is the following noncommutation lemma. Although it is straightforward to state and prove, we want to highlight this, as it is the key conceptual idea that made the analysis of the present work possible. Recall that an element $A \in \SU(1,1)$ is called \emph{elliptic} if $\tr\, A \in (-2,2)$ and \emph{hyperbolic} if $\tr \, A \in \bbR \setminus [-2,2]$. An elliptic matrix $A$ is characterized by having complex conjugate eigenvalues $\lambda_\pm = \eop^{\pm \iop  \theta}$ for some $\theta \in (0,\pi)$ while hyperbolic matrices are charaterized by having eigenvalues $\lambda_\pm = \lambda^{\pm 1}$, where  $\lambda \in \bbR \setminus [-1,1]$.

\begin{lemma} \label{lem:SU11:nonabSemiGroup}
Suppose $A,B \in \SU(1,1)$ are elliptic and $[A,B]\neq 0$. Then the semigroup generated by $A$ and $B$ contains a hyperbolic element of $\SU(1,1)$.
\end{lemma}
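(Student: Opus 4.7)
The plan is to exhibit a hyperbolic element of the semigroup as an approximation to the multiplicative commutator $C := A B A^{-1} B^{-1}$, whose hyperbolicity I will verify directly. Since the statement of the lemma is invariant under conjugation inside $\SU(1,1)$, I may assume without loss of generality that $A = \begin{bmatrix} \eop^{\iop \alpha} & 0 \\ 0 & \eop^{-\iop \alpha} \end{bmatrix}$ for some $\alpha \in (0, 2\pi) \setminus \{\pi\}$, in which case an arbitrary $B \in \SU(1,1)$ has the form $B = \begin{bmatrix} a & b \\ \overline{b} & \overline{a} \end{bmatrix}$ with $|a|^2 - |b|^2 = 1$. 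The assumption $[A, B] \neq 0$ forces $b \neq 0$ (otherwise $B$ is diagonal and commutes with $A$), while $A \neq \pm I$ (elliptic and nontrivial) gives $\sin\alpha \neq 0$.

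A direct matrix computation then yields
\[
\tr C = 2|a|^2 - 2|b|^2 \cos(2\alpha) = 2 + 4|b|^2 \sin^2\alpha,
\]
after applying $|a|^2 - |b|^2 = 1$. Since $b \neq 0$ and $\sin\alpha \neq 0$, one has $\tr C > 2$, so $C$ is hyperbolic. I view this identity as the key conceptual point of the proof: the very parameter $b$ that obstructs commutation of $A$ and $B$ is precisely what drives $\tr C$ strictly past~$2$.

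To promote $C$ to an actual element of the semigroup, I approximate $A^{-1}$ and $B^{-1}$ by nonnegative powers. If $A$ has finite multiplicative order $N$ (necessarily $N \geq 3$, since $A \neq \pm I$), then $A^{-1} = A^{N-1}$ already lies in the semigroup; if $A$ has infinite order, then $\{\eop^{\iop n\alpha} : n \in \bbN\}$ is dense on the unit circle and so one can choose $m_k \in \bbN$ with $A^{m_k} \to A^{-1}$ in $\SU(1,1)$. The same dichotomy applied to $B$ yields $n_k \in \bbN$ with $B^{n_k} \to B^{-1}$. The semigroup elements $W_k := A \cdot B \cdot A^{m_k} \cdot B^{n_k}$ then satisfy $W_k \to C$ as $k \to \infty$ by joint continuity of matrix multiplication; continuity of the trace together with the strict inequality $\tr C > 2$ then implies $\tr W_k > 2$ for all sufficiently large $k$, and any such $W_k$ is the desired hyperbolic element of the semigroup. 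The only genuine calculation is the trace identity above, which is routine once $A$ is placed in diagonal form; the remainder is a clean approximation argument relying on the fact that the inverse of any elliptic element of $\SU(1,1)$ lies in the closure of its nonnegative powers.
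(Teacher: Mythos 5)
Your proof is correct, and it takes a genuinely different (and more self-contained) route from the paper's. The paper's argument is in two steps: it first cites Simon's Lemma 10.4.14 to assert that the \emph{closed subgroup} generated by $A$ and $B$ contains a hyperbolic element, and then argues that for elliptic matrices the closed semigroup coincides with the closed subgroup (via the same rational/irrational dichotomy you use), so openness of the hyperbolic locus finishes the job. You instead identify a concrete hyperbolic element of the subgroup with no appeal to outside literature: writing $A$ in diagonal form and computing $\tr(ABA^{-1}B^{-1}) = 2 + 4|b|^2\sin^2\alpha > 2$ is exactly right (I verified the computation), and the noncommutation hypothesis is what makes $b \neq 0$, so the mechanism by which noncommutation forces hyperbolicity is laid completely bare. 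The remainder of your argument — approximating $A^{-1}$ and $B^{-1}$ by positive powers, using the finite-order/dense-orbit dichotomy, and then passing from $\tr C > 2$ to $\tr W_k > 2$ for large $k$ by continuity — matches the paper's second step. What your version buys is concreteness: you never need to know anything about general closed noncommutative subgroups of $\SU(1,1)$, and the proof becomes a short explicit calculation; what the paper's version buys is brevity at the cost of an external citation. One very minor remark: your parenthetical ``necessarily $N \geq 3$'' is true but not actually needed — $N \geq 2$ already suffices to make $A^{N-1}$ a positive power, and $N \geq 2$ is immediate from $A \neq I$.
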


\begin{proof}
It is well-known that the \emph{closed subgroup} of $\SU(1,1)$ generated by $A$ and $B$ contains a hyperbolic element  \cite[Lemma~10.4.14]{Simon2005OPUC2}.

Now, let $A$ be elliptic with eigenvalues $\eop^{\pm 2\pi \iop  t}$, $t \in [0,1/2]$. If $t = p/q$ is rational, then one has $A^{-1}= A^{q-1}$. If $t$ is irrational, then choosing rationals $p_k/q_k \to t$ one can check that $A^{q_k-1} \to A^{-1}$, so one can approximate $A^{-1}$ to arbitrary precision with positive powers of $A$. In either case, the closed semigroup generated by two elliptic matrices is the same as the closed subgroup they generate.

Since the closed semigroup generated by $A, B$ contains a hyperbolic element and the set of hyperbolic matrices is open, the semigroup generated by $A, B$ contains a hyperbolic element.
\end{proof}

By conjugating with a Cayley transform, one can immediately push this result to $\SL(2,\bbR)$. Though we do not need it for the present work, we record it here, since it may be of independent interest.

\begin{coro}
Suppose $A,B \in \SL(2,\bbR)$ are elliptic and $[A,B]\neq 0$. Then the semigroup generated by $A$ and $B$ contains a hyperbolic element of $\SL(2,\bbR)$.
\end{coro}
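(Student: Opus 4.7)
The plan is to reduce directly to Lemma~\ref{lem:SU11:nonabSemiGroup} by exploiting the fact that $\SL(2,\bbR)$ and $\SU(1,1)$ are conjugate inside $\SL(2,\bbC)$ via a fixed Cayley transform. Explicitly, one can take $C = \tfrac{1}{\sqrt{2}}\bigl[\begin{smallmatrix} 1 & -\iop \\ -\iop & 1 \end{smallmatrix}\bigr] \in \SL(2,\bbC)$ (or any other matrix implementing the standard biholomorphism between the upper half-plane and the unit disk), which satisfies $C \SL(2,\bbR) C^{-1} = \SU(1,1)$.

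First I would observe that the map $\Phi : M \mapsto C M C^{-1}$ is a group isomorphism $\SL(2,\bbR) \to \SU(1,1)$, and in particular sends the semigroup generated by $A$ and $B$ onto the semigroup generated by $\Phi(A)$ and $\Phi(B)$. Second, because trace is invariant under conjugation, $\Phi$ preserves the ellipticity/hyperbolicity classification: for $M \in \SL(2,\bbR)$, $M$ is elliptic iff $\tr M \in (-2,2)$, and the same characterization holds for $\Phi(M) \in \SU(1,1)$. Hence $\Phi(A)$ and $\Phi(B)$ are elliptic elements of $\SU(1,1)$. Third, $[\Phi(A),\Phi(B)] = \Phi([A,B]) \neq 0$ since $\Phi$ is injective.

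Now Lemma~\ref{lem:SU11:nonabSemiGroup} applies to the pair $\Phi(A),\Phi(B)$ and produces a word $w$ in $\Phi(A),\Phi(B)$ (with nonnegative exponents) such that $w(\Phi(A),\Phi(B))$ is hyperbolic in $\SU(1,1)$. Applying $\Phi^{-1}$ and using that $\Phi$ is a homomorphism, we get $w(A,B) = \Phi^{-1}(w(\Phi(A),\Phi(B)))$, which lies in the semigroup generated by $A$ and $B$. Since $\tr w(A,B) = \tr w(\Phi(A),\Phi(B)) \in \bbR \setminus [-2,2]$, this element is hyperbolic in $\SL(2,\bbR)$, which is the desired conclusion.

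There is really no serious obstacle here; the only point worth checking is that the conjugation $\Phi$ is literally a semigroup homomorphism (so that words in $A,B$ correspond to words in $\Phi(A),\Phi(B)$ with the same exponent pattern), and that trace—hence the elliptic/hyperbolic dichotomy—is preserved, both of which are immediate.
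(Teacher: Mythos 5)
Your proof is correct and follows essentially the same route as the paper: conjugate by a fixed Cayley transform to carry $A,B$ into $\SU(1,1)$, apply Lemma~\ref{lem:SU11:nonabSemiGroup} there, and pull the resulting hyperbolic word back. The paper's proof is just a terser version of the same argument (it uses $W = -\tfrac{1}{1+\iop}\bigl[\begin{smallmatrix} 1 & -\iop \\ 1 & \iop \end{smallmatrix}\bigr]$, which is unitary, so $WAW^*$ is likewise a conjugation), leaving implicit the observations you spell out about preservation of trace, noncommutativity, and semigroup words under a group isomorphism.
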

\begin{proof}
Recall that $A \in \SL(2,\bbR) \iff WAW^* \in \SU(1,1)$, where
\begin{equation}
 W = -\frac{1}{1+\iop } \begin{bmatrix} 1 & -\iop  \\ 1 & \iop  \end{bmatrix}. \end{equation}
Thus, the result follows by applying Lemma~\ref{lem:SU11:nonabSemiGroup} to $WAW^*$ and $WBW^*$.
\end{proof}

In view of Lemma~\ref{lem:SU11:nonabSemiGroup}, it is helpful to know when matrices commute.
For a set $H \subseteq \SL(2,\bbC)$, denote by $Z(H) = \{g \in \SL(2,\bbC) : gh=hg \ \forall h \in H\}$ its centralizer. For $T>0$, let $Q(T,z)$ denote the set of all periodic Dirac transfer matrices over period $T$ with energy $z$, that is, 
\begin{equation}
Q(T,z) 
= \set{A_z(T,\varphi) : \varphi \in C(\bbR) \text{ is } T\text{-periodic}}.
\end{equation}

\begin{lemma} \label{lem:dirac:ZHT}
For each $T>0$ and $z \in \bbC$, 
$Z(Q(T,z)) = \{\pm I\}$.
\end{lemma}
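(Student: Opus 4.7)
The plan is to reduce the problem to exhibiting two non-commuting elements in $Q(T,z)$. Granting this, suppose $g \in \SL(2,\bbC) \setminus \{\pm I\}$ commutes with every element of $Q(T,z)$. In $\SL(2,\bbC)$, the centralizer of any non-central element is abelian---it is either a maximal torus (when the element is semisimple with distinct eigenvalues) or, up to conjugation, the group $\{\pm I\}$ times the upper unitriangular subgroup (when the element is nontrivially unipotent). Thus any two elements of $Q(T,z) \subseteq Z(\{g\})$ would have to commute, a contradiction. Combined with the trivial inclusion $\{\pm I\} \subseteq Z(Q(T,z))$, this gives the lemma.

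To produce non-commuting transfer matrices, I would restrict attention to constant potentials $\varphi \equiv c \in \bbC$, which are continuous and trivially $T$-periodic. Writing $N(c) = \begin{bmatrix} -z & c \\ -\bar c & z \end{bmatrix}$, the eigenequation $\Lambda_\varphi U = zU$ reduces to $U' = \iop N(c) U$, so $A_z(T,c) = \eop^{\iop T N(c)}$. Since $\tr N(c) = 0$ and $\det N(c) = |c|^2 - z^2$, Cayley--Hamilton yields $N(c)^2 = \mu^2 I$ with $\mu = \mu(c) := \sqrt{z^2 - |c|^2}$, and a Taylor expansion gives the closed form
\[
A_z(T,c) = \cos(T\mu)\, I + \iop \frac{\sin(T\mu)}{\mu}\, N(c),
\]
where $\sin(T\mu)/\mu$ is interpreted as the entire function taking the value $T$ at $\mu = 0$.

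A short calculation then gives
\[
[A_z(T,c_1), A_z(T,c_2)] = -\frac{\sin(T\mu(c_1))\sin(T\mu(c_2))}{\mu(c_1) \mu(c_2)}[N(c_1), N(c_2)],
\]
and a direct matrix computation shows that the $(1,1)$-entry of $[N(c_1), N(c_2)]$ equals $c_2 \overline{c_1} - c_1 \overline{c_2} = 2 \iop \Im(c_2 \overline{c_1})$. Taking $c_1 = r$ and $c_2 = \iop r$ for $r > 0$ forces this entry to be $2\iop r^2 \neq 0$, so $[N(c_1), N(c_2)] \neq 0$. Since $\mu(c_1) = \mu(c_2) = \sqrt{z^2 - r^2}$, restricting $r$ further to the complement of the (finite if $z \in \bbR$, empty if $z \notin \bbR$) set on which $T\mu(c_j)$ is a nonzero integer multiple of $\pi$ guarantees that both sine factors are nonzero, and therefore $[A_z(T,c_1), A_z(T,c_2)] \neq 0$.

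There is no serious obstacle here. The only points that require mild care are the closed-form evaluation of $\eop^{\iop T N(c)}$, which uses the identity $N(c)^2 = \mu^2 I$ and the analytic interpretation of $\sin(T\mu)/\mu$ at $\mu = 0$, and the choice of $c_1, c_2$ that makes both the scalar prefactor and the algebraic bracket nondegenerate uniformly in $z$ and $T$. The $\pi/2$-rotation $c_2 = \iop c_1$ is natural because it maximizes $|\Im(c_2 \overline{c_1})|$ relative to $|c_1||c_2|$.
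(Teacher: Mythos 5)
Your proof is correct, and it runs along essentially the same lines as the paper's: restrict to constant operator data $\varphi\equiv c$, compute the transfer matrix $A_z(T,c)$ explicitly as a matrix exponential, and exploit how it changes with $c$. The differences lie in the final deduction. The paper writes down the eigenvectors of $B(z,c)$ (your $\iop N(c)$), observes that any $D$ in the centralizer must share eigenvectors with $A_z(T,c)$, and concludes $D=\pm I$ because those eigenvector directions vary with $c$. You instead reduce the problem a priori to exhibiting two non-commuting elements of $Q(T,z)$ (via the fact that the centralizer of any non-central element of $\SL(2,\bbC)$ is abelian), and then produce them with the explicit Cayley--Hamilton form $A_z(T,c) = \cos(T\mu)\,I + \iop\,\frac{\sin(T\mu)}{\mu}\,N(c)$ and the choice $c_1=r$, $c_2=\iop r$. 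Your version has the advantage of making explicit the degeneracy you need to avoid (namely $T\mu\in\pi\bbZ\setminus\{0\}$, where $A_z(T,c)=\pm I$), which the paper leaves implicit in its ``by varying $c$'' step. One small inaccuracy: the exceptional set of $r$ is finite when $z^2\ge 0$ and empty otherwise (e.g.\ $z=\iop s$ with $s\in\bbR$ is non-real but gives $z^2=-s^2<0$, so no exceptional $r$); your parenthetical tied this to $z\in\bbR$ versus $z\notin\bbR$, which is slightly off, though it does not affect the conclusion.
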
 

\begin{proof}
Let $z \in \bbC$ and $T>0$ be given, and consider $\varphi(x) \equiv c$ the constant function. The corresponding transfer matrices are given by solving
\begin{equation}
\begin{bmatrix}
\iop  & 0 \\ 0 & - \iop 
\end{bmatrix}\partial_x A + \begin{bmatrix}
0 & c \\ \overline{c} & 0
\end{bmatrix} A = zA
\end{equation}
leading to
\begin{equation}
\partial_x A 
= \underbrace{\begin{bmatrix}
-\iop z & \iop c \\ -\iop \overline{c} & \iop z
\end{bmatrix}}_{=:B(z,c)}A
\end{equation}
which gives the transfer matrix $A_z(T,c) = \eop^{T B(z,c)}$. The matrix $B(z,c)$ has eigenvectors
\begin{equation}
v_\pm = \begin{bmatrix}
z \pm \sqrt{z^2-|c|^2} \\ \overline{c}
\end{bmatrix}
\end{equation}
so any matrix $D\in Z(Q(T,z))$ must have the same eigenvectors. By varying $c$, we see that $D$ has more than two linearly independent eigenvectors that are not multiples of each other. Thus, $D = \pm I$.
\end{proof}

We will now combine this with analyticity. The transfer matrix $A_z(T,\varphi)$ is a solution of the initial value problem so it is an analytic function of $\varphi,\overline \varphi$ (see \cite{GrebertKappeler2014}): it can be represented as a convergent power series
\[
A_z(T,\varphi) = \sum_{n=0}^\infty  P_n(\varphi,\bar\varphi)
\]
where each $P_n$ is homogeneous of degree $n$  in $\varphi,\bar\varphi$. It is therefore also a real analytic function of $\Re \varphi, \Im \varphi \in C([0,T], \bbR)$ with values in the space of $2\times 2$ matrices, viewed as a real Banach space. 

\begin{lemma} \label{lem:dirac:resolventCover}
For every $T$-periodic $\varphi_0 \in C(\bbR)$, $\lambda \in \bbR$, and $\varepsilon > 0$, there exists $\widetilde{\varphi} \in C(\bbR)$ of period $\widetilde{T} \in T\,\bbN$ such that $\widetilde\varphi(0) = \varphi(0)$,
\begin{equation}
\|\varphi_0 -\widetilde\varphi\|_\infty < \varepsilon,
\end{equation}
and
\begin{equation}
\lambda \notin \sigma(\Lambda_{\widetilde\varphi}).
\end{equation}
\end{lemma}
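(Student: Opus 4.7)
The plan is as follows. If $\lambda \notin \sigma(\Lambda_{\varphi_0})$, I simply take $\widetilde\varphi = \varphi_0$ (with $\widetilde T = T$); henceforth assume $\lambda \in \sigma(\Lambda_{\varphi_0})$. I will construct two $T$-periodic perturbations $\varphi_1, \varphi_2$ of $\varphi_0$, both valued $\varphi_0(0)$ at the origin and within $\varepsilon$ of $\varphi_0$ in sup norm, whose monodromies $B_j := A_\lambda(T, \varphi_j) \in \SU(1,1)$ are either hyperbolic (in which case $\widetilde\varphi = \varphi_j$ already works) or are both elliptic, non-central (i.e., $\neq \pm I$), and mutually non-commuting. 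In the non-commuting case, Lemma~\ref{lem:SU11:nonabSemiGroup} produces a hyperbolic word $C = X_K \cdots X_1$ in the semigroup generated by $B_1$ and $B_2$, and I will define $\widetilde\varphi$ as the $\widetilde T = KT$-periodic function whose restriction to $[(k-1)T, kT]$ is the translate $x\mapsto \varphi_{j(k)}(x-(k-1)T)$, where $j(k) \in \{1,2\}$ is chosen so $X_k = B_{j(k)}$. Continuity at each junction $x = kT$ is immediate from $\varphi_j(T) = \varphi_j(0) = \varphi_0(0)$, and by construction $\widetilde\varphi(0) = \varphi_0(0)$ and $\|\widetilde\varphi - \varphi_0\|_\infty < \varepsilon$; the monodromy of $\widetilde\varphi$ over a full period is $X_K \cdots X_1 = C$, so $\lambda \notin \sigma(\Lambda_{\widetilde\varphi})$ by Theorem~\ref{t:dirac:floquet}(d).

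To build $\varphi_1$, I will work inside the closed affine subspace $\mathcal V \subset C(\bbR, \bbC)$ of $T$-periodic functions with value $\varphi_0(0)$ at $0$. The map $\phi \mapsto A_\lambda(T, \phi)$ restricted to $\mathcal V$ is real-analytic with values in $\SU(1,1)$, and the subset $\Omega \subset \mathcal V$ where $A_\lambda(T, \phi)$ is hyperbolic or elliptic with $A_\lambda(T,\phi) \neq \pm I$ coincides with $\{\phi \in \mathcal V : (\tr A_\lambda(T, \phi))^2 \neq 4\}$, i.e.\ the complement of the zero locus of a real-analytic function. Non-triviality of this function on $\mathcal V$ follows by variation of parameters applied to a generic direction tangent to $\mathcal V$ at $\varphi_0$, so $\Omega$ is open and dense. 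I pick $\varphi_1 \in \Omega$ with $\|\varphi_1 - \varphi_0\|_\infty < \varepsilon$; if $B_1 := A_\lambda(T, \varphi_1)$ is hyperbolic the construction terminates, otherwise $B_1$ is elliptic with $B_1 \neq \pm I$.

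For $\varphi_2$, I invoke Lemma~\ref{lem:dirac:ZHT}: since $B_1 \neq \pm I$, there is some $T$-periodic $\psi_0$ with $[A_\lambda(T, \psi_0), B_1] \neq 0$. Adjusting $\psi_0$ on a short interval adjacent to $x = 0$ to land in $\mathcal V$ perturbs its transfer matrix arbitrarily little and therefore preserves the open non-commutation condition, yielding $\psi \in \mathcal V$ with $[A_\lambda(T, \psi), B_1] \neq 0$. Thus the real-analytic map $\phi \mapsto [A_\lambda(T, \phi), B_1]$ on $\mathcal V$ is not identically zero, so its zero set has empty interior; the intersection of its complement with $\Omega$ is open and dense, and I choose $\varphi_2$ in this intersection with $\|\varphi_2 - \varphi_0\|_\infty < \varepsilon$. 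Either $B_2 := A_\lambda(T, \varphi_2)$ is hyperbolic (and then $\widetilde\varphi = \varphi_2$) or $B_1, B_2$ satisfy all hypotheses of Lemma~\ref{lem:SU11:nonabSemiGroup}, and the pasting construction from the first paragraph completes the argument.

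The main obstacle I expect is the careful density bookkeeping inside the constrained space $\mathcal V$: specifically, verifying that the conclusion of Lemma~\ref{lem:dirac:ZHT} — the existence of a non-commuting monodromy — survives both the affine constraint $\phi(0) = \varphi_0(0)$ and the $\varepsilon$-proximity to $\varphi_0$. This rests on real-analyticity of $\phi \mapsto A_\lambda(T, \phi)$ as a Banach-space-valued map together with local adjustments of witness potentials to fit the boundary constraint. Once those open dense subsets of $\mathcal V$ are in hand, Lemma~\ref{lem:SU11:nonabSemiGroup} converts non-commutation into a hyperbolic product purely algebraically, delivering the desired spectral gap without any appeal to spectral translation — the obstruction in the Dirac setting emphasized in the introduction.
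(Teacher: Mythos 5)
Your proposal is correct and takes essentially the same route as the paper: exploit Lemma~\ref{lem:dirac:ZHT} to produce a non-commuting monodromy, combine with Lemma~\ref{lem:SU11:nonabSemiGroup} to obtain a hyperbolic word in the semigroup, and concatenate the corresponding $T$-periodic blocks to build $\widetilde\varphi$. The one structural difference is cosmetic: the paper first reduces away the boundary condition $\widetilde\varphi(0)=\varphi_0(0)$ (by noting it can be restored afterward via a small $L^2$ perturbation near the origin, which moves gap edges only slightly), whereas you work inside the affine slice $\mathcal{V}=\{\phi:\phi(0)=\varphi_0(0)\}$ throughout; the latter obliges you to verify the non-triviality of the real-analytic maps $\phi\mapsto(\tr A_\lambda(T,\phi))^2-4$ and $\phi\mapsto[A_\lambda(T,\phi),B_1]$ restricted to $\mathcal V$, which you do correctly by the same local-adjustment-near-$0$ device the paper invokes in its preliminary reduction, so the two arguments are equivalent in content.
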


\begin{proof}
We begin by noting that it suffices to prove existence of $\widetilde\varphi$ satisfying all conditions except $\widetilde\varphi(0) = \varphi(0)$; that condition can be added by a continuous perturbation in a neighborhood of $0$ which can furthermore be chosen to be an arbitrarily small perturbation in $L^2([0,\widetilde T])$ norm. Since gap edges of the $\widetilde T$-periodic operator are continuous with respect to $L^2([0,\widetilde T])$ norm, this can be done without affecting the other conclusions.

From here, the argument falls into three cases, according to the monodromy matrix $M_\lambda(\varphi_0) = A_\lambda(T,\varphi_0)$.
\medskip

\textbf{Case 1: \boldmath $\tr \, M_\lambda(\varphi_0) \in \bbR \setminus [-2,2]$.} In this case, $\lambda \notin \sigma(\Lambda_{\varphi_0})$, so set $\widetilde\varphi=\varphi_0$.

\medskip
\textbf{Case 2: \boldmath $\tr \, M_\lambda(\varphi_0) \in (-2,2)$}. In this case $M_\lambda(\varphi_0) \neq \pm I$ so, by Lemma~\ref{lem:dirac:ZHT}, there exists a $T$-periodic $\varphi_1 \in C(\bbR)$ such that
\begin{equation}\label{10dec1}
[M_\lambda(\varphi_0),M_\lambda(\varphi_1)] \neq 0.
\end{equation}
This commutator is a real analytic function of $\varphi_1 \in C([0,T])$, viewed as a real Banach space. Since the commutator is not identically zero, its zero set has empty interior by the identity principle for real analytic functions \cite[3.1.24]{Federer}. Moreover, elliptic elements form an open set; thus, there exists $\varphi_1 \in C([0,T])$ with $\|\varphi_0 -\varphi_1 \|_\infty < \varepsilon$ such that \eqref{10dec1} holds and both $M_\lambda(\varphi_0)$ and $M_\lambda(\varphi_1)$ are elliptic. As discussed at the beginning of the proof, we can wiggle $\varphi_1$ slightly to ensure $\varphi_0(0) = \varphi_1(0)$. 
By Lemma~\ref{lem:SU11:nonabSemiGroup} the semigroup generated by $M_\lambda(\varphi_0)$ and $M_\lambda(\varphi_1)$ contains a hyperbolic element, so we may choose $L \in \bbN$, $k_1,k_2,\ldots,k_L \in \bbN$, and $s_1,s_2,\ldots ,s_L\in\{0,1\}$ so that the matrix
\begin{equation}
\widetilde{A} = M_\lambda(\varphi_{s_L})^{k_L}M_\lambda(\varphi_{s_{L-1}})^{k_{L-1}} \cdots M_\lambda(\varphi_{s_1})^{k_1}
\end{equation}
is hyperbolic. The desired perturbation of $\varphi_0$ is the corresponding concatenation of $\varphi_0$ and $\varphi_1$. Define $\widetilde\varphi$ by
\begin{equation}
\widetilde{\varphi}(x) = \varphi_{s_\ell}(x) \quad \text{ whenever } \quad
T \sum_{j=1}^{\ell-1}k_j \leq x < T\sum_{j=1}^\ell k_j,
\end{equation}
and extend $\widetilde{\varphi}$ to a $KT$-periodic function, where $K = k_1+\cdots+k_L$.

\medskip

\textbf{Case 3: \boldmath $\tr \, M_\lambda(\varphi_0) = \pm 2$.}  The entries of $M_\lambda(\varphi)$ and its trace are  real analytic functions of $\varphi$. Since neither of the equalities $\tr \, M_\lambda(\varphi)= \pm 2$ hold identically in $\varphi$, the set where it holds has empty interior, again by the identity principle for real analytic functions. Thus, by an arbitrarily small perturbation of $\varphi_0$ we reduce to Case~1 or Case~2.
\end{proof}

Now, all that remains is to prove the key lemma.

\begin{proof}[Proof of Lemma~\ref{lem:dirac:thinSpec}]
Let $\varphi$, $R$, and $\varepsilon>0$ be given. By Lemma~\ref{lem:dirac:resolventCover} and compactness, we may find $\{\varphi_j\}_{j=1}^m$ with period $T' \in T\bbN$ such that 
\[ \bigcup_{j=1}^m \rho(\Lambda_{\varphi_j}) \supseteq [-R,R] \]
and
\[\varphi(0) = \varphi_j(0), \quad j = 1,2,\ldots,m\]
(of course, the periods may not be the same at first; however, since all periods are multiples of $T$, one can clearly pass to the least common multiple of the finite collection). 

Let $L(\lambda,\varphi)$ refer to the Lyapunov exponent at energy $\lambda$ associated with the operator $\Lambda_\varphi$. Since $L(\lambda,\varphi)$ is a continuous function of $\lambda \in \bbR$ that is positive away from the spectrum of $\Lambda_\varphi$, the construction gives
\begin{equation} \label{eq:thin:etaDef}
\kappa := \min_{|\lambda| \leq R} \max_{1 \leq j \leq m} L(\lambda,  \varphi_j) > 0.
\end{equation}
Consider $N \in \bbN$ large, and choose $\hat{N}$ maximal with $m(\hat{N} + 1)T' \leq NT$. Thus, 
\begin{equation} \label{eq:dirac:largeNchoice1}
 \hat{N}T'>\frac{NT}{m}- 2T' \geq \frac{NT}{2m},
\end{equation}
where the second equality holds if $N$ is sufficiently large.
 Define $\widetilde{\varphi}$ to be $\widetilde{T} = NT$-periodic by concatenating a total of $\hat{N}+1$ copies of each $\varphi_j$ and filling in the remainder with $\varphi$'s. More precisely, let $s_j = j(\hat{N}+1)T'$, put
\begin{equation}
\widetilde\varphi = \begin{cases} \varphi_j(x) & s_{j-1} \leq x < s_j \\
\varphi(x) & s_m \leq x < \widetilde{T}, \end{cases}
\end{equation}
and extend to a $\widetilde{T}$-periodic function. By construction, one can see that $\widetilde{\varphi}$ is continuous and satisfies $\|\varphi-\widetilde\varphi\|_\infty < \varepsilon$.

By a repetition of the arguments of \cite{Avila2009CMP, DFL2017JST}, one arrives at
\begin{equation} \label{eq:dirac:dflgoal}
 \Leb(\sigma(\Lambda_{\widetilde{\varphi}}) \cap [-R,R] ) 
 \leq \eop^{-c\widetilde{T}}\end{equation}
for a suitable constant $c$. 

For the reader's convenience, let us supply the details. Recall that $M_\lambda(s) = A_\lambda(s+\widetilde{T},s,\widetilde\varphi)$ denotes the mondromy matrix starting at $s \in \bbR$ and $D(\lambda) = \tr(M_\lambda(s))$ denotes the discriminant. Let $\lambda \in [-R,R]$ be given such that $D(\lambda) \in (-2,2)$, and use \eqref{eq:thin:etaDef} to choose $j$ with $1 \le j \le m$ such that $L(\lambda,\varphi_j) \geq \kappa$.

For $s \in \bbR$, define also
\begin{align*}
X_{\lambda}(s) & = A_\lambda(s + \hat{N}T',s,\widetilde\varphi). \end{align*}
Thus, $X_{\lambda}(s)$ transfers across a subinterval of length $\hat{N}T'$ beginning at $s$. By $\widetilde{T}$-periodicity of $\widetilde{\varphi}$, the reader can readily check that
\begin{align} 
\nonumber
& \, X_{\lambda}(s)^{-1} M_{\lambda}(s + \hat{N}T') X_{\lambda}(s)  \\
\nonumber
 = & \,  A_{\lambda}(s+\hat{N}T',s,\widetilde\varphi)^{-1}  A_{\lambda}(s+\hat{N}T' +\widetilde{T},s+\hat{N}T',\widetilde\varphi)  A_{\lambda}(s+\hat{N}T',s,\widetilde\varphi) \\ 
 \nonumber
 = & \,  A_{\lambda}(s,s+\hat{N}T',\widetilde\varphi)  A_{\lambda}(s+\hat{N}T'+\widetilde{T},s,\widetilde\varphi) \\ 
  \nonumber
 = & \,  A_{\lambda}(s+\widetilde{T},s+\hat{N}T' +\widetilde{T},\widetilde\varphi)  A_{\lambda}(s+\hat{N}T' +\widetilde{T},s,\widetilde\varphi) \\
   \nonumber
 = & \,  A_{\lambda}(s+\widetilde{T},s,\widetilde\varphi) \\
\label{eq:dirac:XlsMlsRel}
 = & \,  M_{\lambda}(s).
\end{align}
for any $s$.

For $s \in [s_j,s_j+T']$, notice that $\widetilde\varphi$ coincides with $\varphi_j$ on the interval $[s,s+\hat{N}T']$, so we have the following lower bound for every $s \in [s_j,s_j+T']$:
\begin{align}
\nonumber
\|X_{\lambda}(s)\|
& = \|A_\lambda(s+\hat{N}T',s,\widetilde\varphi)\| \\
\nonumber
& =  \|A_\lambda(s+T',s,\varphi_j)^{\hat{N}}\| \\
\nonumber
& \geq \mathrm{spr}(A_\lambda(s+T',s, \varphi_j))^{\hat{N}} \\
\nonumber
& = \eop^{L(\lambda,\varphi_j) \hat{N}T'} \\
\label{eq:dirac:XlambdasLB}
& \geq
\eop^{\kappa \hat{N}T'}.
\end{align}
Combining \eqref{eq:dirac:XlambdasLB} with \eqref{eq:dirac:largeNchoice1}
\begin{equation}
\|X_\lambda(s)\| 
\geq \eop^{c_1\widetilde{T}}, \quad \forall \ s \in [s_j,s_j+T']
\end{equation}
where $c_1 = \kappa/(2m) >0$.

Since $|D(\lambda)|<2$, let us denote by $B_{\lambda}(s)$ the conjugacies from Theorem~\ref{t:dirac:floquet} such that $B_{\lambda}(s) M_{\lambda}(s) B_{\lambda}(s)^{-1} \in \mathrm{K}$ (where $\mathrm{K}$ denotes the diagonal elements of $\SU(1,1)$ as in \eqref{eq:dirac:monodromyConjDef}). 
From \eqref{eq:dirac:XlsMlsRel}, we see that $M_{\lambda}(s)$ is conjugated to a rotation by both $B_{\lambda}(s)$ and $B_{\lambda}(s+\hat{N}T')X_{\lambda}(s)$.  By uniqueness of conjugacies modulo diagonal rotations, we have
\begin{equation}
B_{\lambda}(s+\hat{N}T')X_{\lambda}(s) = QB_{\lambda}(s)
\end{equation}
for some diagonal $Q \in \SU(1,1)$. This implies
\begin{equation}
\max(\|B_{\lambda}(s) \|, \|B_{\lambda}(s+\hat{N}T')\|) \geq \eop^{c_1 \widetilde{T}/2}, \quad
s \in [s_j,s_j+T'].
\end{equation}
According to Lemma~\ref{l:ids:tm}, this gives
\[
\frac{{\rmd}\rho}{{\rmd}\lambda}(\lambda) \geq c_0\widetilde{T}^{-1} \int_0^{\widetilde{T}} \|B_{\lambda} (s)\|^2 \, {\rmd}s
\geq c_0T'\widetilde{T}^{-1} \eop^{c_1 \widetilde{T}}.
\]
Since the density of states measure gives weight $1/\widetilde{T}$ to each band of the spectrum, each band in $[-R,R]$ has measure at most
\begin{equation}
|B| \lesssim (T')^{-1} \eop^{-c_1\widetilde{T}}.
\end{equation}
In view of Theorem~\ref{t:dirac:floquet}, there are $\lesssim \widetilde{T}$ such bands, and hence we obtain
\[ \Leb(\sigma(\Lambda_{\widetilde\varphi}) \cap [-R,R])
\lesssim (T')^{-1}\widetilde{T} \eop^{-c_1\widetilde{T}}.\]
Choosing a constant $0<c<c_1$ and $N$ sufficiently large, we obtain \eqref{eq:dirac:dflgoal} and hence the lemma is proved.
\end{proof}

\section{The CMV Setting} \label{sec:OPUC}
To demonstrate the versatility of this approach, we show how it can be used to resolve the corresponding issue for CMV matrices. Indeed, it is known that new ideas were necessary to resolve the question of zero-measure Cantor spectrum for limit-periodic CMV matrices. Compare the discussion on \cite[pp.~5113--5114]{FillmanOng2017JFA}.

The main new technique here is to apply the noncommutation idea from the Dirac operator setting to perturb spectra of CMV matrices and introduce spectral gaps in a controlled fashion. As before, the construction of periodic sequences of $\alpha$'s with thin spectra is the crucial technical step. The overall program is similar (but technically simpler in a few steps) for the CMV setting.

Recall that $\delta$ denotes the hyperbolic metric on $\bbD$ and the induced uniform metric on $\bbD^\bbZ$ as in Definition~\ref{def:poincaremetric}.

\begin{lemma} \label{lem:OPUC:thinSpec}
For any $q$-periodic $\alpha \in \bbD^\bbZ$ and $\varepsilon>0$, there exist $c_0=c_0(\alpha,\varepsilon) > 0$ and $N_0 = N_0(\alpha,
\varepsilon)\in \bbN$ such that for any $N\geq N_0$, there exists $\widetilde\alpha \in \LP(\bbZ,\bbD)$ of period $\widetilde{q}=Nq$ such that
\begin{equation}
\delta(\alpha,\widetilde\alpha) < \varepsilon
\end{equation}
and
\begin{equation}
\Leb(\sigma(\calE_{\widetilde\alpha})) \leq \eop^{-c_0\widetilde{q}}
\end{equation}
\end{lemma}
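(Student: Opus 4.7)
The plan is to mirror the proof of the Dirac analog (Lemma~3.1) step by step, substituting the CMV versions of the auxiliary tools. The essential ingredients are (a) a CMV analog of Lemma~2.17 (``resolvent coverage by perturbation''), (b) the noncommutation lemma in $\SU(1,1)$ (Lemma~3.3, which applies directly to CMV transfer matrices since they are also in $\SU(1,1)$ up to normalization at $z\in\partial\bbD$), (c) a density-of-states lower bound in terms of the Floquet conjugator $\|B_z(n)\|^2$, and (d) the fact that the spectrum of a $q$-periodic extended CMV matrix consists of at most $q$ bands on $\partial\bbD$.

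The first step is to prove the CMV analog of Lemma~2.17: for every $q$-periodic $\alpha_0$, every $z_0\in\partial\bbD$, and every $\varepsilon>0$, there exists $\widetilde\alpha$ of period $q'\in q\bbN$ with $\delta(\alpha_0,\widetilde\alpha)<\varepsilon$ and $z_0\notin\sigma(\calE_{\widetilde\alpha})$. As in Lemma~2.17, I would split according to whether the monodromy $M_{z_0}(\alpha_0)$ is hyperbolic, parabolic, or elliptic. The hyperbolic case is trivial; the parabolic case reduces to one of the others via a generic perturbation, using that the transfer matrix entries are real-analytic in the Verblunsky coefficients and the identity principle for real-analytic maps on open subsets of $\bbD^q$. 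In the elliptic case, I would produce $\alpha_1$ close to $\alpha_0$ (in $\delta$) with $M_{z_0}(\alpha_0)$ and $M_{z_0}(\alpha_1)$ both elliptic and noncommuting, then invoke Lemma~3.3: the semigroup they generate contains a hyperbolic element, which corresponds to a concatenation of $\alpha_0$- and $\alpha_1$-blocks whose monodromy is hyperbolic at $z_0$. Noncommutation of the pair follows from a CMV analog of Lemma~3.5, namely that the joint centralizer of all $q$-periodic CMV monodromies at a fixed $z_0\in\partial\bbD$ is $\{\pm I\}$; this I would verify by computing monodromies for constant Verblunsky coefficients and observing that varying the constant produces eigenvectors pointing in distinct directions.

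The second step is a compactness argument: since $\partial\bbD$ is compact and each $\rho(\calE_{\widetilde\alpha_j})$ is open, finitely many perturbations $\alpha_1,\dots,\alpha_m$ suffice to cover $\partial\bbD$ in resolvent sets, and by replacing periods with their least common multiple we may assume they all have common period $q'\in q\bbN$. Define $\kappa=\min_{z\in\partial\bbD}\max_j L(z,\alpha_j)>0$. Given $N$ large, construct $\widetilde\alpha$ of period $\widetilde q=Nq$ by concatenating $\hat N+1$ consecutive copies of each $\alpha_j$ (with $\hat N T'=\tfrac{Nq}{m}+O(1)$) and padding the remainder with $\alpha_0$. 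The resulting $\widetilde\alpha$ satisfies $\delta(\alpha_0,\widetilde\alpha)<\varepsilon$. For any $z\in\partial\bbD$ at which $D(z,\widetilde\alpha)\in(-2,2)$, pick $j$ with $L(z,\alpha_j)\ge\kappa$; then on a subinterval of length $\hat Nq'$, the transfer matrix $X_z(s)$ grows like $\eop^{\kappa\hat Nq'}\ge\eop^{c_1\widetilde q}$, and the same conjugacy-uniqueness argument as in Lemma~3.1 yields $\max\bigl(\|B_z(s)\|,\|B_z(s+\hat Nq')\|\bigr)\ge\eop^{c_1\widetilde q/2}$. The CMV analog of Lemma~2.15 then forces $d\rho/d\theta(z)\gtrsim\eop^{c_1\widetilde q}/\widetilde q$, and since each band carries $\rho$-mass $1/\widetilde q$ and there are at most $\widetilde q$ bands, the total Lebesgue measure of $\sigma(\calE_{\widetilde\alpha})$ is at most $\eop^{-c_0\widetilde q}$ for any $0<c_0<c_1$ and $N$ large.

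The main obstacle is the CMV IDS lower bound (ingredient (c)), since the Dirac derivation in Section~2 of this paper uses the Johnson–Moser function and Schur functions in a continuous setting; the CMV version would require setting up the analogous machinery (half-line Schur functions, the Carathéodory function, and its relation to the density of states via Thouless' formula) to obtain a lower bound of the form $d\rho/d\theta\gtrsim\frac{1}{q}\sum_{n=1}^{q}\|B_z(n)\|^2$. Once this bound is in place, the counting of bands follows from standard CMV Floquet theory, and all remaining steps transcribe the Dirac argument with cosmetic changes, the hyperbolic metric $\delta$ playing the role of $\|\cdot\|_\infty$ to keep transfer matrices uniformly controlled under perturbation.
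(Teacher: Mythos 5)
Your proposal is correct and follows essentially the same route as the paper: the new work is the CMV resolvent-cover lemma built from the $\SU(1,1)$ noncommutation argument (Lemma~\ref{lem:SU11:nonabSemiGroup}) and the real-analytic identity-principle trick, after which the concatenation, IDS lower bound, and band-count machinery delivers the exponential Lebesgue estimate. The paper discharges that second half by citing \cite[Lemma~5.3]{FillmanOng2017JFA}, which already contains the CMV analogue of the Dirac IDS lower bound you flag as the main obstacle, so you would not need to rebuild the Schur-function/Carath\'eodory machinery from scratch.
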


As in the Dirac case, Lemma~\ref{lem:OPUC:thinSpec} yields the desired results.

\begin{proof}[Proof of Theorems~\ref{t:extCMV:zeromeas} and \ref{t:extCMV:zerohd}]
Theorems~\ref{t:extCMV:zeromeas} and \ref{t:extCMV:zerohd} follow from Lemma~\ref{lem:OPUC:thinSpec} in precisely the same manner that Theorems~\ref{t:dirac:zeromeas} and \ref{t:dirac:zerohd} followed from Lemma~\ref{lem:dirac:thinSpec}. The relevant version of the Gordon lemma in the CMV case is given in \cite{Fillman2017PAMS}. In fact, the arguments are very slightly simpler, since $\partial \bbD$ is compact, so there is no need to work locally in energy in this setting.
\end{proof}

The remainder of the section is concerned with the proof of Lemma~\ref{lem:OPUC:thinSpec}, which is similar to that of Lemma~\ref{lem:dirac:thinSpec}; we concentrate on the key steps. Let us introduce some tools and characters.
Given $a \in \bbD$ and $z \in \bbC$, the Szeg\H{o} matrix is given by 
\begin{equation}
A(a,z) = \frac{1}{\sqrt{1-|a|^2}} \begin{bmatrix}
z & -\bar{a} \\ -az & 1
\end{bmatrix}.
\end{equation}
For $n,m \in \bbZ$, we also define
\[A_z(n,m,\alpha) 
= \begin{cases} A(\alpha_{n-1},z)\cdots A(\alpha_m,z) & n>m \\ 
I & n=m \\ 
[A_z(m,n,\alpha)]^{-1} & n<m . \end{cases}\]
Of course, the final line is not well-defined if $z=0$, but this is not an issue since we will only consider $z \in \partial \bbD$.

If $\alpha$ is $q$-periodic, the \emph{monodromy matrix} is given by $M_z = M_z(\alpha) = z^{-q/2}A_z(q,0,\alpha)$ and determines the spectrum via
\[ \sigma(\mathcal{E}_\alpha) = \set{z \in \partial \bbD : \tr(M_z) \in [-2,2]} \]
in that case. For this and other facts about periodic CMV matrices, we direct the reader to Simon \cite{Simon2005OPUC2}.

As we did with Dirac operators, we will use real-analyticity of the discriminant and Szeg\H{o} matrices as functions of $\Re \alpha_n$ and $\Im \alpha_n$.
The following identity principle supplies the needed input. For more details about multivariate analytic functions, we direct the reader to Gunning--Rossi \cite{GunningRossi1965}, particularly Theorem~6 of Chapter~1.

\begin{theorem}
If a real-analytic function of $n$ variables vanishes on an open subset of $\bbR^n$, then it vanishes identically.
\end{theorem}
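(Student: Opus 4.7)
The plan is to prove the identity principle by an open--closed argument exploiting the local power series representation of real-analytic functions. Let $f$ be a real-analytic function on $\bbR^n$ that vanishes on a nonempty open set $U$, and define
\[
V = \bigl\{ p \in \bbR^n : (\partial^\alpha f)(p) = 0 \text{ for every multi-index } \alpha \bigr\}.
\]
Once I show $V = \bbR^n$, the case $\alpha = 0$ yields the desired conclusion $f \equiv 0$.

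First I would observe that $V$ is closed: it is an intersection of preimages of $\{0\}$ under the continuous maps $\partial^\alpha f$. Moreover, $V$ contains $U$, since all partial derivatives of a function that vanishes on an open set also vanish there; in particular, $V$ is nonempty.

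The key step is to verify that $V$ is open. Fix $p \in V$. By definition of real-analyticity, there is an open polydisk $W$ centered at $p$ on which $f$ is represented by its convergent Taylor series at $p$. Since every coefficient of that series is a scalar multiple of some $\partial^\alpha f(p)$, all coefficients vanish, so $f \equiv 0$ on $W$. Then all partial derivatives of $f$ also vanish on $W$, which gives $W \subseteq V$. Thus $V$ is a nonempty clopen subset of the connected space $\bbR^n$, so $V = \bbR^n$.

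There is no real obstacle; the argument is classical, and the only nontrivial ingredient is the local representation by a convergent power series, which is built into the definition of real-analyticity. An alternative route would be to reduce to the one-variable identity theorem by restricting $f$ to lines through a point of $V$, but the open--closed argument above seems cleaner and adapts immediately to the complex-analytic case cited from Gunning--Rossi.
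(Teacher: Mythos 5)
The paper does not actually prove this statement: it is stated without proof and the reader is referred to Gunning--Rossi (Theorem 6 of Chapter 1, which is the identity theorem for holomorphic functions of several complex variables, implicitly via the standard extension of a real-analytic function to a holomorphic one on a neighborhood of $\bbR^n$ in $\bbC^n$). Your open--closed argument is a correct, self-contained proof in the real category and is the classical one. Each step checks out: $V$ is closed because all $\partial^\alpha f$ are continuous; $V \supseteq U$ because $U$ is open, so every derivative of $f$ vanishes at points of $U$; $V$ is open because real-analyticity gives $f$ equal to its Taylor series on a polydisk $W$ around $p$, all of whose coefficients vanish, so $f \equiv 0$ on $W$ and hence (as $W$ is open) $W \subseteq V$; connectedness of $\bbR^n$ then forces $V = \bbR^n$. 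The one thing worth stressing, which you implicitly use, is that real-analyticity (not mere smoothness) is exactly what makes the openness step work --- $e^{-1/x^2}$ is the classic counterexample in the $C^\infty$ category. Compared with the paper's route, your argument avoids complexification entirely and is arguably more elementary; the complex route has the advantage that once you have the stronger statement that the zero set of a nonzero holomorphic function is nowhere dense and of codimension at least one, it is more readily reusable in the form the paper actually invokes (empty interior of zero sets of real-analytic maps like $\varphi \mapsto [M_\lambda(\varphi_0), M_\lambda(\varphi)]$).
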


As a consequence of this, we can deduce the following helpful fact.

\begin{prop} \label{prop:OPUC:noncommuting}
Suppose $\alpha \in \bbD^\bbZ$ is $q$-periodic, $z \in \partial\bbD$, $M_z(\alpha) \neq \pm I$. For every $\varepsilon>0$, there exists $\beta$ of period $q$ such that $\delta(\alpha, \beta) <\varepsilon$ and 
\begin{equation}
[M_z(\alpha), M_z(\beta)] \neq 0.
\end{equation}
\end{prop}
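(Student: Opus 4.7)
My proof plan follows the Dirac analog in Lemma~\ref{lem:dirac:resolventCover}, combining the identity principle for real-analytic functions with a direct noncommutation argument at the level of transfer matrices. Identifying each $q$-periodic sequence $\gamma$ with its first $q$ coordinates $(\gamma_0,\dots,\gamma_{q-1})\in\bbD^q$, I view $\bbD^q$ as an open subset of $\bbR^{2q}$ via real and imaginary parts. The Szeg\H{o} matrices $A(a,z)$ are expressions involving $a$, $\bar a$, and $(1-|a|^2)^{-1/2}$, all of which are real-analytic on $\bbD$; consequently, the commutator $\gamma\mapsto[M_z(\alpha),M_z(\gamma)]$ is real-analytic on $\bbD^q$. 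If this map is not identically zero, the identity theorem for real-analytic functions guarantees that its zero set has empty interior, and since the Poincar\'e and Euclidean topologies agree on $\bbD$, one finds $\beta$ arbitrarily close to $\alpha$ in the $\delta$-metric with $[M_z(\alpha),M_z(\beta)]\neq 0$.

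It therefore suffices to exhibit a single $q$-periodic $\gamma_0$ with $[M_z(\alpha),M_z(\gamma_0)]\neq 0$. I would obtain this from the following CMV analog of Lemma~\ref{lem:dirac:ZHT}: the centralizer in $\SL(2,\bbC)$ of the set $\{M_z(\gamma):\gamma\in\bbD^\bbZ\text{ is }q\text{-periodic}\}$ equals $\{\pm I\}$. Given this, the hypothesis $M_z(\alpha)\neq \pm I$ immediately yields the desired $\gamma_0$. To prove the centralizer claim I would restrict to constant sequences $\gamma_n\equiv c$, for which $M_z(\gamma)=z^{-q/2}A(c,z)^q$ has the same eigenvectors as $A(c,z)$ (away from the exceptional set where $A(c,z)^q$ is scalar, which is nowhere dense in $c$). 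A direct computation from
\[
A(c,z)=\frac{1}{\sqrt{1-|c|^2}}\begin{bmatrix}z & -\bar c\\ -cz & 1\end{bmatrix}
\]
shows that as $c$ ranges over $\bbD$, the eigenvectors of $A(c,z)$ span infinitely many distinct lines in $\bbC P^1$. Any $2\times 2$ matrix preserving more than two one-dimensional subspaces of $\bbC^2$ must be a scalar, and belonging to $\SL(2,\bbC)$ then forces it to equal $\pm I$.

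The main obstacle is the explicit eigenvector analysis for $A(c,z)$ in the centralizer step; I expect this to be a short, routine $2\times 2$ computation that precisely mirrors the proof of Lemma~\ref{lem:dirac:ZHT} in the Dirac setting. Everything else is a transcription of the two-step recipe (identity principle plus noncommutation) from Section~\ref{sec:proofs} into the CMV context.
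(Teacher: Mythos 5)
Your proposal follows the paper's proof strategy: the commutator $\beta\mapsto[M_z(\alpha),M_z(\beta)]$ is real-analytic on $\bbD^q$, so by the identity principle its zero set has empty interior unless it vanishes identically, and one rules out identical vanishing by restricting to constant sequences $\beta=(a,\dots,a)$. Your reorganization of the second step into an explicit centralizer lemma (the CMV analog of Lemma~\ref{lem:dirac:ZHT}) is a minor structural difference — the paper's CMV proof skips stating such a lemma and argues directly — but the mathematical content is the same.

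There is, however, a concrete gap in the constant-sequence step, and it is not merely a matter of a "short, routine $2\times 2$ computation." You remark that the set of $c$ where $A(c,z)^q$ is scalar is nowhere dense, but this fails at $z=-1$ when $q$ is even: since $\det A(c,z)=z$ and $\tr A(c,-1)=(-1+1)/\sqrt{1-|c|^2}=0$, the characteristic polynomial of $A(c,-1)$ is $\lambda^2-1$ for \emph{every} $c\in\bbD$, so $A(c,-1)^2=I$ identically and hence $M_{-1}(\gamma)=z^{-q/2}I$ is a scalar for every constant sequence $\gamma$. In that case the constant sequences impose no constraint whatsoever, and the eigenvector argument says nothing. (The paper's own proof of Proposition~\ref{prop:OPUC:noncommuting}, which also tests only against $\beta=(a,\dots,a)$, has the same lacuna, silently.) The centralizer claim itself remains true: for instance, for $q\ge 2$ and $z=-1$ one can take $\beta=(0,\dots,0,b)$; then $M_{-1}(\beta)$ is a unimodular multiple of $A(b,-1)A(0,-1)^{q-1}=A(b,-1)A(0,-1)$ (using $A(0,-1)^2=I$), whose eigenvectors are $(1,\pm b/|b|)$ and sweep out infinitely many lines as $\arg b$ varies. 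So the fix is to test against a suitable non-constant family rather than constants alone. Everything else in your argument — real-analyticity of the Szeg\H{o} matrix entries in $\Re\alpha_n,\Im\alpha_n$, the identity principle, the reduction of the proposition to the centralizer claim, the observation that a matrix in $\SL(2,\bbC)$ preserving more than two lines is $\pm I$ — is correct and matches the paper's reasoning.
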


\begin{proof}
Suppose that for some $q$-periodic $\alpha \in \bbD^\bbZ$ and some $z \in \partial \bbD$, the function
\begin{equation} \label{eq:cmv:commMzalphabetaDef}
\bbD^q \ni \beta \mapsto [M_z(\alpha), M_z(\beta)] \in \bbC^{2\times 2}
\end{equation}
vanishes\footnote{In a minor abuse of notation, we write $\beta$ both for the element of $\bbD^q$ and the obvious $q$-periodic extension in $\bbD^\bbZ$.} on an open set in $\bbD^q$. Since the commutator in \eqref{eq:cmv:commMzalphabetaDef} is a real-analytic function of the variables $\{\Re \beta_j, \Im \beta_j: 1\le j \le q\}$, this implies that the commutator vanishes for all $\beta \in \bbD^q$. In particular, $M_z(\alpha)$ and $M_z(\beta)$ are simultaneously diagonalizable for all $\beta \in \bbD^q$. By choosing $\beta = (a,a,\ldots,a)$ for some $a \in \bbD$, we see that this implies $M_z(\alpha) = cI$ for a constant $c$, which  in turn forces $M_z(\alpha) = \pm I$ (because $M_z(\alpha) \in \SU(1,1)$), as desired.
\end{proof}

\begin{lemma} \label{lem:OPUC:resolventCover}
Suppose $\mathcal{E}_\alpha$ is a $q$-periodic extended CMV matrix. For all $\varepsilon>0$ and all $z \in \partial\bbD$, there exists $N \in \bbN$ and an $Nq$-periodic $\beta \in \bbD^\bbZ$ such that $\delta(\alpha,\beta) < \varepsilon$ and $z \notin \sigma(\mathcal{E}_\beta)$.
\end{lemma}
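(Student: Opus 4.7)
The approach is to mirror the proof of Lemma~\ref{lem:dirac:resolventCover} from the Dirac setting, using Proposition~\ref{prop:OPUC:noncommuting} as the CMV-side analogue of the real-analyticity argument used there. The plan is to split the analysis according to the type of the monodromy $M_z(\alpha) \in \SU(1,1)$ (recall that $z \in \partial \bbD$ implies $M_z(\alpha) \in \SU(1,1)$, so it is hyperbolic, elliptic, or parabolic). In each case, we produce a $\beta$ whose monodromy is hyperbolic, placing $z$ outside $\sigma(\mathcal{E}_\beta)$.

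The easy cases come first. If $\tr M_z(\alpha) \in \bbR \setminus [-2,2]$, then already $z \notin \sigma(\mathcal{E}_\alpha)$ and we take $\beta = \alpha$, $N = 1$. If $\tr M_z(\alpha) = \pm 2$, I will observe that the real-analytic function $\gamma \mapsto \tr M_z(\gamma) \mp 2$ on $\bbD^q$ does not vanish identically (witnessed, e.g., by evaluating along the line $\gamma = (a,0,\ldots,0)$ and computing the Szeg\H{o} matrix product explicitly), so by the real-analytic identity principle its zero set has empty interior. An arbitrarily small perturbation in $\delta$ then reduces to Case~1 or to the elliptic case.

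The core case is $\tr M_z(\alpha) \in (-2,2)$. Here $M_z(\alpha)$ is elliptic and in particular $M_z(\alpha) \neq \pm I$. The first step is to produce $\alpha^{(1)} \in \bbD^q$ with $\delta(\alpha,\alpha^{(1)}) < \varepsilon$ such that $M_z(\alpha^{(1)})$ is \emph{also} elliptic and $[M_z(\alpha), M_z(\alpha^{(1)})] \neq 0$; this is obtained by intersecting the open set on which $M_z$ is elliptic with the dense set supplied by Proposition~\ref{prop:OPUC:noncommuting}. Setting $\alpha^{(0)} := \alpha$, Lemma~\ref{lem:SU11:nonabSemiGroup} furnishes choices of $L \in \bbN$, powers $k_1,\ldots,k_L \in \bbN$ and indices $s_1,\ldots,s_L \in \{0,1\}$ for which
\[
\widetilde{M} = M_z(\alpha^{(s_L)})^{k_L} \cdots M_z(\alpha^{(s_1)})^{k_1}
\]
is hyperbolic. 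Taking $N = k_1 + \cdots + k_L$ (doubled if necessary for parity), I define $\beta$ by the concatenation
\[
\beta_n = \alpha^{(s_\ell)}_n \quad \text{for } q\sum_{j=1}^{\ell-1} k_j \le n < q\sum_{j=1}^{\ell} k_j,
\]
extended $Nq$-periodically. The product formula for the Szeg\H{o} cocycle then yields $M_z(\beta) = \widetilde{M}$, hence $z \notin \sigma(\mathcal{E}_\beta)$, while $\delta(\alpha,\beta) < \varepsilon$ is immediate from the componentwise definition of the uniform Poincar\'e metric. The main obstacle is Case~2: one must simultaneously arrange ellipticity of $M_z(\alpha^{(1)})$ and noncommutativity with $M_z(\alpha)$ in order to apply Lemma~\ref{lem:SU11:nonabSemiGroup}; both Cases~1 and~3 and the concatenation computation are largely routine once this is in place.
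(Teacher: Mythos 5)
Your proposal is correct and follows essentially the same route as the paper's proof: the same three-way split on $\tr M_z(\alpha)$, the same use of Proposition~\ref{prop:OPUC:noncommuting} together with openness of the elliptic condition to produce a nearby noncommuting elliptic partner, and the same invocation of Lemma~\ref{lem:SU11:nonabSemiGroup} followed by concatenation. The only divergence is your ``doubled if necessary for parity'' remark, which is harmless but unnecessary: the $z^{-q/2}$ factors are carried by each block's monodromy and the spectral test $\tr M_z \in [-2,2]$ is insensitive to the branch choice since $[-2,2]$ is symmetric.
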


\begin{proof}
Let $z \in \partial \bbD$ and $\varepsilon$ be given. 
\medskip

\textbf{Case 1: \boldmath $\tr \, M_z(\alpha)\in \bbR \setminus[-2,2]$.} In this case, $z \notin \sigma(\calE_\alpha)$, so set $\beta = \alpha$.

\medskip
\textbf{Case 2: \boldmath $\tr \, M_z(\alpha) \in (-2,2)$}. In this case $M_z(\alpha) \neq \pm I$ so, by Proposition~\ref{prop:OPUC:noncommuting}, there exists a $q$-periodic $\gamma$ such that
\begin{equation}
[M_z(\alpha),M_z(\gamma)] \neq 0.
\end{equation}
and $\delta(\alpha, \gamma) < \varepsilon$. Taking $\varepsilon$ small enough, we may ensure that $M_z(\gamma)$ also has trace in $(-2,2)$.
By Lemma~\ref{lem:SU11:nonabSemiGroup} the semigroup generated by $M_z(\alpha)$ and $M_z(\gamma)$ contains a hyperbolic element. We may choose $L \in \bbN$, $k_1,k_2,\ldots,k_L \in \bbN$, and $\sigma_1,\sigma_2,\ldots ,\sigma_L\in\{\alpha,\gamma\}$ so that the matrix
\begin{equation}
\widetilde{M} = M_z(\sigma_L)^{k_L} M_z(\sigma_{L-1})^{k_{L-1}} \cdots M_z(\sigma_1)^{k_1}
\end{equation}
is hyperbolic. The desired perturbation of $\alpha$ is the corresponding concatenation of $\alpha$ and $\gamma$. That is, define $\beta$ by
\begin{equation}
\beta_n 
= (\sigma_\ell)_n \quad \sum_{j=1}^{\ell-1}k_j \leq n < \sum_{j=1}^\ell k_j,
\end{equation}
and extend $\beta$ to a $Kq$-periodic function, where $K = k_1+\cdots+k_L$.

\medskip

\textbf{Case 3: \boldmath $\tr \, M_z(\alpha) = \pm 2$.}  The entries of $M_z(\alpha)$ and its trace are real-analytic functions of $\{\Re\alpha_j, \Im \alpha_j : 1 \le j \le q\}$. Since the equality $\tr M_z(\alpha)= \pm 2$ does not hold identically in $\alpha$, the set where it holds has empty interior, again by the identity principle for real analytic functions. Thus, by an arbitrarily small perturbation of $\varphi_0$ we reduce to Case~1 or Case~2.
\end{proof}

We now have all the pieces that are needed.

\begin{proof}[Proof of Lemma~\ref{lem:OPUC:thinSpec}]
The crucial observation is supplied by Lemma~\ref{lem:OPUC:resolventCover}. With that lemma in hand, the proof follows from precisely the same arguments used to prove 
\cite[Lemma~5.3]{FillmanOng2017JFA}. 
\end{proof}

\bibliographystyle{abbrv}

\bibliography{EFGLbib}

\begin{thebibliography}{10}

\bibitem{Avila2009CMP}
A.~Avila.
\newblock On the spectrum and {L}yapunov exponent of limit periodic
  {S}chr\"{o}dinger operators.
\newblock {\em Comm. Math. Phys.}, 288(3):907--918, 2009.

\bibitem{BLY2}
R.~Bessonov, M.~Luki{\'c}, and P.~Yuditskii.
\newblock Reflectionless canonical systems, ii. almost periodicity and
  character-automorphic fourier transforms.
\newblock {\em arXiv:2011.05266}.

\bibitem{BDGL}
I.~Binder, D.~Damanik, M.~Goldstein, and M.~Lukic.
\newblock Almost periodicity in time of solutions of the {K}d{V} equation.
\newblock {\em Duke Math. J.}, 167(14):2633--2678, 2018.

\bibitem{BDFGVWZ2019JFA}
V.~Bucaj, D.~Damanik, J.~Fillman, V.~Gerbuz, T.~VandenBoom, F.~Wang, and
  Z.~Zhang.
\newblock Positive {L}yapunov exponents and a large deviation theorem for
  continuum {A}nderson models, briefly.
\newblock {\em J. Funct. Anal.}, 277(9):3179--3186, 2019.

\bibitem{CGMV2010CPAM}
M.-J. Cantero, F.~A. Gr\"{u}nbaum, L.~Moral, and L.~Vel\'{a}zquez.
\newblock Matrix-valued {S}zeg{\H{o}} polynomials and quantum random walks.
\newblock {\em Comm. Pure Appl. Math.}, 63(4):464--507, 2010.

\bibitem{CGMV2012QIP}
M.~J. Cantero, F.~A. Gr\"{u}nbaum, L.~Moral, and L.~Vel\'{a}zquez.
\newblock The {CGMV} method for quantum walks.
\newblock {\em Quantum Inf. Process.}, 11(5):1149--1192, 2012.

\bibitem{ClarkGesztesy}
S.~Clark and F.~Gesztesy.
\newblock Weyl-{T}itchmarsh {$M$}-function asymptotics, local uniqueness
  results, trace formulas, and {B}org-type theorems for {D}irac operators.
\newblock {\em Trans. Amer. Math. Soc.}, 354(9):3475--3534, 2002.

\bibitem{ClarkGesztesy2}
S.~Clark and F.~Gesztesy.
\newblock On self-adjoint and {$J$}-self-adjoint {D}irac-type operators: a case
  study.
\newblock In {\em Recent advances in differential equations and mathematical
  physics}, volume 412 of {\em Contemp. Math.}, pages 103--140. Amer. Math.
  Soc., Providence, RI, 2006.

\bibitem{Damanik2000GordonSurvey}
D.~Damanik.
\newblock Gordon-type arguments in the spectral theory of one-dimensional
  quasicrystals.
\newblock In {\em Directions in mathematical quasicrystals}, volume~13 of {\em
  CRM Monogr. Ser.}, pages 277--305. Amer. Math. Soc., Providence, RI, 2000.

\bibitem{DFG2014AHP}
D.~Damanik, J.~Fillman, and A.~Gorodetski.
\newblock Continuum {S}chr\"{o}dinger operators associated with aperiodic
  subshifts.
\newblock {\em Ann. Henri Poincar\'{e}}, 15(6):1123--1144, 2014.

\bibitem{DFL2017JST}
D.~Damanik, J.~Fillman, and M.~Luki\'{c}.
\newblock Limit-periodic continuum {S}chr\"{o}dinger operators with zero
  measure {C}antor spectrum.
\newblock {\em J. Spectr. Theory}, 7(4):1101--1118, 2017.

\bibitem{DFLY2015IMRN}
D.~Damanik, J.~Fillman, M.~Lukic, and W.~Yessen.
\newblock Uniform hyperbolicity for {S}zeg{\H{o}} cocycles and applications to
  random {CMV} matrices and the {I}sing model.
\newblock {\em Int. Math. Res. Not. IMRN}, (16):7110--7129, 2015.

\bibitem{DFW2021preprint}
D.~Damanik, J.~Fillman, and C.~Wang.
\newblock Thin spectra and singular continuous spectral measures for
  limit-periodic {J}acobi matrices.
\newblock {\em preprint}, 2021.
\newblock arXiv:2110.10113.

\bibitem{DamanikGan2010JFA}
D.~Damanik and Z.~Gan.
\newblock Limit-periodic {S}chr\"{o}dinger operators in the regime of positive
  {L}yapunov exponents.
\newblock {\em J. Funct. Anal.}, 258(12):4010--4025, 2010.

\bibitem{DamMunYes2013JSP}
D.~Damanik, P.~Munger, and W.~N. Yessen.
\newblock Orthogonal polynomials on the unit circle with {F}ibonacci
  {V}erblunsky coefficients, {II}. {A}pplications.
\newblock {\em J. Stat. Phys.}, 153(2):339--362, 2013.

\bibitem{Egorova}
I.~E. Egorova.
\newblock Almost periodicity of some solutions of the {K}d{V} equation with
  {C}antor spectrum.
\newblock {\em Dopov./Dokl. Akad. Nauk Ukra\"{\i}ni}, (7):26--29, 1993.

\bibitem{Egorova2}
I.~E. Egorova.
\newblock The {C}auchy problem for the {K}d{V} equation with almost periodic
  initial data whose spectrum is nowhere dense.
\newblock In {\em Spectral operator theory and related topics}, volume~19 of
  {\em Adv. Soviet Math.}, pages 181--208. Amer. Math. Soc., Providence, RI,
  1994.

\bibitem{EgorovaSurkova}
I.~E. Egorova and A.~B. Surkova.
\newblock On the almost periodicity of ``nonreflective'' {D}irac operators with
  the {C}antor spectrum.
\newblock {\em Dopov. Nats. Akad. Nauk Ukra\"{\i}ni}, (12):13--15 (1996), 1995.

\bibitem{EGL}
B.~Eichinger, E.~Gwaltney, and M.~Luki{\'c}.
\newblock Stahl--{T}otik regularity for {D}irac operators.
\newblock {\em arXiv:2012.12889}.

\bibitem{EVY}
B.~Eichinger, T.~VandenBoom, and P.~Yuditskii.
\newblock Kd{V} hierarchy via abelian coverings and operator identities.
\newblock {\em Trans. Amer. Math. Soc. Ser. B}, 6:1--44, 2019.

\bibitem{Falconer}
K.~Falconer.
\newblock {\em Techniques in fractal geometry}.
\newblock John Wiley \& Sons, Ltd., Chichester, 1997.

\bibitem{Federer}
H.~Federer.
\newblock {\em Geometric measure theory}.
\newblock Die Grundlehren der mathematischen Wissenschaften, Band 153.
  Springer-Verlag New York Inc., New York, 1969.

\bibitem{Fillman2017PAMS}
J.~Fillman.
\newblock Purely singular continuous spectrum for {S}turmian {CMV} matrices via
  strengthened {G}ordon lemmas.
\newblock {\em Proc. Amer. Math. Soc.}, 145(1):225--239, 2017.

\bibitem{FillmanOng2017JFA}
J.~Fillman and D.~C. Ong.
\newblock Purely singular continuous spectrum for limit-periodic {CMV}
  operators with applications to quantum walks.
\newblock {\em J. Funct. Anal.}, 272(12):5107--5143, 2017.

\bibitem{Furstenberg1963TAMS}
H.~Furstenberg.
\newblock Noncommuting random products.
\newblock {\em Trans. Amer. Math. Soc.}, 108:377--428, 1963.

\bibitem{GesztesyYuditskii}
F.~Gesztesy and P.~Yuditskii.
\newblock Spectral properties of a class of reflectionless {S}chr\"{o}dinger
  operators.
\newblock {\em J. Funct. Anal.}, 241(2):486--527, 2006.

\bibitem{Gordon19761976}
A.~J. Gordon.
\newblock The point spectrum of the one-dimensional {S}chr\"{o}dinger operator.
\newblock {\em Uspehi Mat. Nauk}, 31(4(190)):257--258, 1976.

\bibitem{GrebertKappeler2014}
B.~Gr\'{e}bert and T.~Kappeler.
\newblock {\em The defocusing {NLS} equation and its normal form}.
\newblock EMS Series of Lectures in Mathematics. European Mathematical Society
  (EMS), Z\"{u}rich, 2014.

\bibitem{GunningRossi1965}
R.~C. Gunning and H.~Rossi.
\newblock {\em Analytic functions of several complex variables}.
\newblock AMS Chelsea Publishing, Providence, RI, 2009.
\newblock Reprint of the 1965 original.

\bibitem{HanJitomirskaya2017Advances}
R.~Han and S.~Jitomirskaya.
\newblock Full measure reducibility and localization for quasiperiodic {J}acobi
  operators: a topological criterion.
\newblock {\em Adv. Math.}, 319:224--250, 2017.

\bibitem{LenzSeifStol2014JDE}
D.~Lenz, C.~Seifert, and P.~Stollmann.
\newblock Zero measure {C}antor spectra for continuum one-dimensional
  quasicrystals.
\newblock {\em J. Differential Equations}, 256(6):1905--1926, 2014.

\bibitem{LevitanSargsjan}
B.~M. Levitan and I.~S. Sargsjan.
\newblock {\em Sturm-{L}iouville and {D}irac operators}, volume~59 of {\em
  Mathematics and its Applications (Soviet Series)}.
\newblock Kluwer Academic Publishers Group, Dordrecht, 1991.
\newblock Translated from the Russian.

\bibitem{Pastur1980CMP}
L.~A. Pastur.
\newblock Spectral properties of disordered systems in the one-body
  approximation.
\newblock {\em Comm. Math. Phys.}, 75(2):179--196, 1980.

\bibitem{Seifert2011EJDE}
C.~Seifert.
\newblock Gordon type theorem for measure perturbation.
\newblock {\em Electron. J. Differential Equations}, pages No. 111, 9, 2011.

\bibitem{seifert2014preprint}
C.~Seifert.
\newblock A quantitative version of gordon's theorem for jacobi and
  sturm-liouville operators.
\newblock {\em preprint}, 2014.
\newblock arXiv:1407.7351.

\bibitem{Simon1976AIHP}
B.~Simon.
\newblock On the genericity of nonvanishing instability intervals in {H}ill's
  equation.
\newblock {\em Ann. Inst. H. Poincar\'{e} Sect. A (N.S.)}, 24(1):91--93, 1976.

\bibitem{Simon2005OPUC2}
B.~Simon.
\newblock {\em Orthogonal polynomials on the unit circle. {P}art 1}, volume~54
  of {\em American Mathematical Society Colloquium Publications}.
\newblock American Mathematical Society, Providence, RI, 2005.
\newblock Classical theory.

\bibitem{Simon2005OPUC1}
B.~Simon.
\newblock {\em Orthogonal polynomials on the unit circle. {P}art 2}, volume~54
  of {\em American Mathematical Society Colloquium Publications}.
\newblock American Mathematical Society, Providence, RI, 2005.
\newblock Spectral theory.

\bibitem{SodinYuditskii2}
M.~Sodin and P.~Yuditskii.
\newblock Almost periodic {S}turm-{L}iouville operators with {C}antor
  homogeneous spectrum.
\newblock {\em Comment. Math. Helv.}, 70(4):639--658, 1995.

\bibitem{ZakharovShabat}
V.~E. Zaharov and A.~B. \v{S}abat.
\newblock A plan for integrating the nonlinear equations of mathematical
  physics by the method of the inverse scattering problem. {I}.
\newblock {\em Funkcional. Anal. i Prilo\v{z}en.}, 8(3):43--53, 1974.

\end{thebibliography}

\end{document}